\newtheorem{thm}{Theorem}[section]
\newtheorem{prop}[thm]{Proposition}
\newtheorem{cor}[thm]{Corollary}
\newtheorem{lem}[thm]{Lemma}
\newtheorem{conj}[thm]{Conjecture}
\numberwithin{equation}{section}
\def\pf{\noindent {\it Proof.} }
\def\P{{\mathcal P}}
\def\I{{\mathcal I}}
\def\N{{\mathcal N}}
\def\M{{\mathcal M}}
\def\Mcnk{{\mathcal M}_{n,k}^{(c)}}
\def\st{\mathop{st}}
\def\bl{{\rm bl}}
\def\Var{{\rm Var}}
\def\E{{\rm E}}
\def\Cov{{\rm Cov}}
\def\crol{{\mathrm cr^{(\ell)}}}
\def\crou{{\mathrm cr^{(u)}}}
\def\croc{{\mathrm cr^{(c)}}}
\def\la{\lambda}
\def\1{$\bf{1}$}
\def\0{$\bf{0}$}
\def\fig2part{ \begin{figure}[h!]
 \hspace{0.8cm}
{\psset{unit=0.65cm}
\begin{pspicture}(-2.5,0)(2.5,2.3)
\psdots(0,2.5) \uput[ur](0,2.5){\scriptsize 1} \SpecialCoor
\psarc(0,0){2.5}{7}{173}
\psarc[linestyle=dotted,dotsep=0.8pt](0,0){2.5}{-5}{7}
\psarc[linestyle=dotted,dotsep=1pt](0,0){2.5}{172}{183}
\def\zigzagCa{\psline(2.5;0)(2.25;2)(2.5;4)(2.25;6)(2.5;8)(2.25;10)(2.5;12)(2.25;14)(2.5;16)(2.25;18)(2.5;20)}
\def\zigzagCaa{\psline(2.5;0)(2.25;2)(2.5;4)(2.25;6)(2.5;8)(2.25;10)(2.5;12)}
\def\zigzagCaaa{\psline(2.5;0)(2.25;2)(2.5;4)(2.25;6)(2.5;8)}
\def\zigzagCb{\psline(2.5;0)(1.90;2)(2.5;4)(1.90;6)(2.5;8)(1.90;10)(2.5;12)(1.90;14)(2.5;16)(1.90;18)(2.5;20)}
\rput{3}(0,0){\zigzagCaaa}\rput{20}(0,0){\zigzagCb}\rput{49}(0,0){\zigzagCa}
\rput{78}(0,0){\zigzagCb} \rput{107}(0,0){\zigzagCa}
\rput{136}(0,0){\zigzagCb} \rput{165}(0,0){\zigzagCaaa}
\def\clocheCa{\pscurve(2.5;0)(1.2;15)(1.2;23)(2.5;38)}
\def\clocheCb{\pscurve(2.5;0)(1.7;19)(2.5;38)}
\rput{11}(0,0){\clocheCa}\rput{69}(0,0){\clocheCa}\rput{127}(0,0){\clocheCa}
\rput{40}(0,0){\clocheCb}\rput{98}(0,0){\clocheCb}
\pscurve[linestyle=dashed, dash=2pt 1pt](2.5;20)(2;15)(1.9;3)
\pscurve[linestyle=dashed, dash=2pt 1pt](2.5;156)(2;162)(1.9;176)
\end{pspicture}}
 \hspace{1.5cm}
 {\psset{unit=0.8cm}
\begin{pspicture}(0,0)(7,2) 
 \def\zigzagA{\multirput(0,0)(.1,0){5}{\psline(0,0)(0.05,.4)(.1,0)}}
 \def\zigzagB{\multirput(0,0)(.1,0){5}{\psline(0,0)(0.05,.2)(.1,0)}}
 \def\clocheA{\psbezier(0.5,0)(0.5,0)(1.25,2.5)(2,0)}
 \def\clocheB{\psbezier(0.5,0)(0.5,0)(1.25,2)(2,0)}
 \multirput(0,0)(2,0){2}{\clocheA}
 \multirput(0,0)(2,0){3}{\zigzagA}
 \multirput(1,0)(2,0){2}{\zigzagB}
 \multirput(1,0)(2,0){2}{\clocheB}
  \rput(4.5,0){\psline[linestyle=dotted,dotsep=1.3pt,linewidth=1.5pt](0,0)(0.5,1)}
 \rput(5,0){\psline[linestyle=dashed, dash=2pt 1.3pt,linewidth=1.5pt](0,0)(0.3,0.55)}
\psline(-0.3,0)(7,0)
\psline[linestyle=dotted,dotsep=1.3pt](5.5,0.5)(6.5,0.5)
 \uput[dl](0,0){\small $1$}
 \psdots(0,0)
\end{pspicture}}
\caption{Sketch of linear and circular representation of a set
partition into 2 blocks}\label{fig:2partition}
\end{figure}}
\def\figpiab{\begin{figure}[h!]
 \hspace{0.4cm}
{\psset{unit=0.65cm}
\begin{pspicture}(-2.5,0)(2.5,2.3)
\SpecialCoor \psarc(0,0){2.5}{7}{173}
\psarc[linestyle=dotted,dotsep=0.8pt](0,0){2.5}{-5}{7}
\psarc[linestyle=dotted,dotsep=1pt](0,0){2.5}{172}{183}

\psdots(2.5;116) \uput[ul](2.5;116){\scriptsize $2b$}
\psdots(2.5;80) \uput[ur](2.5;80){\scriptsize 1} \psdots(2.5;60)
\uput[ur](2.5;60){\scriptsize 2} \psdots(2.5;40)
\uput[ur](2.5;40){\scriptsize 3} \psdots(2.5;20)
\uput[ur](2.5;20){\scriptsize 4}

\def\chorda{\pscurve(2.5;0)(2.05;20)(2.5;40)}
\def\chordb{\pscurve[linestyle=dashed](2.5;0)(1.6;20)(2.5;40)}
\def\zigzagD{\psline(2.5;0)(2.25;2)(2.5;4)(2.25;6)(2.5;8)(2.25;10)(2.5;12)(2.25;14)(2.5;16)}
\rput{0}(0,0){\chorda}\rput{40}(0,0){\chorda}\rput{98}(0,0){\chorda}\rput{138}(0,0){\chorda}
\rput{20}(0,0){\chordb}\rput{116}(0,0){\chordb}
\rput{82}(0,0){\zigzagD}
\pscurve[linestyle=dashed](2.5;60)(1.6;88)(2.5;116)
\pscurve[linestyle=dashed, dash=2pt 1pt](2.5;20)(2;15)(1.9;3)
\pscurve[linestyle=dashed, dash=2pt 1pt](2.5;156)(2;162)(1.9;176)
\end{pspicture}}
\hspace{1cm}
 {\psset{unit=1cm}
\begin{pspicture}(0,0)(6.7,2) 
\psline(-0.3,0)(2.5,0)
\psline[linestyle=dotted,dotsep=0.15pt](2.5,0)(4,0)
\psline(4,0)(6.3,0)
\def\arca{\pscurve(0,0)(0.5,0.9)(1,0)}
 \def\arcb{\pscurve[linestyle=dashed](0,0)(0.5,0.6)(1,0)}
 \multirput(0,0)(1,0){2}{\arca}
 \multirput(0.5,0)(1,0){2}{\arcb}
 \rput(2,0){\psline[linestyle=dashed, dash=2pt 1.3pt,linewidth=1.5pt](0,0)(0.4,0.8)}
  \rput(2.5,0){\psline[linestyle=dashed, dash=2pt 1.3pt,linewidth=1.5pt](0,0)(0.3,0.55)}
 \multirput(4.5,0)(1,0){1}{\arca}
 \multirput(4,0)(1,0){2}{\arcb}
 \rput(4.5,0){\psline[linestyle=dashed, dash=2pt 1.3pt,linewidth=1.5pt](0,0)(-0.4,0.8)}
  \rput(4,0){\psline[linestyle=dashed, dash=2pt 1.3pt,linewidth=1.5pt](0,0)(-0.3,0.55)}
  \psdots(0,0)\uput[dl](0,0){\small $1$}
  \psdots(0.5,0)\uput[dl](.5,0){\small $2$}
  \psdots(1,0)\uput[dl](1.0,0){\small $3$}
  \psdots(1.5,0)\uput[dl](1.5,0){\small $4$}
  \psdots(6,0)\uput[dr](6,0){\small $2b$}
\end{pspicture}}
\caption{Sketch of circular and linear representation of
$\pi(a,b)$}\label{fig:pi(a,b)}
\end{figure}}
\begin{document}

\title[Distribution of crossings in set partitions]{On the limiting distribution
 of some numbers of crossings in
set partitions}

\author[A. Kasraoui]{Anisse Kasraoui$^{*}$}
\address{Fakult\"at f\"ur Mathematik, Universit\"at Wien,\\
Nordbergstrasse 15,A-1090 Vienna,Austria}
\email{anisse.kasraoui@univie.ac.at}
\thanks{$^{*}$ Research supported by the grant S9607-N13 from Austrian Science Foundation FWF
 in the framework of the National Research Network  ``Analytic Combinatorics and Probabilistic Number theory".}

\maketitle

\begin{abstract}
 We study the asymptotic distribution of the two following combinatorial parameters:
the number of arc crossings in the linear representation, $\crol$,
and the number of chord crossings in the circular representation,
$\croc$, of a random set partition.  We prove that, for $k\leq
n/(2\,\log n)$ (resp., ${k=o(\sqrt{n})}$), the distribution of the parameter $\crol$ (resp., $\croc$) 
taken over partitions of $[n]:=\{1,2,\ldots,n\}$ into $k$
blocks is, after standardization, asymptotically Gaussian as $n$
tends to infinity. We give exact and asymptotic formulas for the
variance of the distribution of the parameter~$\crol$ from which we
deduce that the distribution of  $\crol$ and $\croc$ taken over all
partitions of $[n]$ is concentrated around its mean.  
The proof of these results relies on a standard analysis  of generating functions
associated with the parameter $\crol$ obtained in earlier work of
Stanton, Zeng and the author. We also
determine the maximum values of the parameters $\crol$ and $\croc$.
\end{abstract}

\section{Introduction and Main results}\label{sec:in}

\subsection{Introduction}

Take $n$ points on a circle labeled $1,2,\ldots,n$ and join them in
vertex disjoint simple polygons (the vertices of which are the
points $1,2,\ldots,n$). The resulting configuration is the
\emph{circular representation} of the set partition of
${[n]:=\{1,2,\ldots,n\}}$ the blocks of which consist of the
elements in the same polygons.  Alternatively, take $n$ points
labeled $1,2,\ldots,n$ on a line and join them in vertex disjoint
directed paths consisting of arcs oriented to the right drawn in the
upper half-plane. Arcs are always drawn in such a way  that any two arcs  cross
at most once. The resulting configuration is the \emph{linear
representation} of the set partition of $[n]$ the blocks of which
consist of the elements in the same paths.  An illustration is  given in
Figure~\ref{fig:representations}.
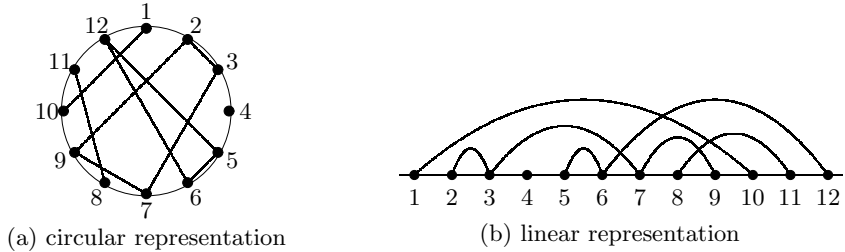
\begin{figure}[h]
\begin{center}
{ \setlength{\unitlength}{1.1mm}
\begin{picture}(25,20)(-10,-5)
 \put(0, 0){\circle{20,5}} \put(0, 10){\circle*{1,2}} \put(5, 8.66){\circle*{1,2}}
 \put(8.66, 5){\circle*{1,2}} \put(10, 0){\circle*{1,2}} \put(8.66, -5){\circle*{1,2}}
 \put(5, -8.66){\circle*{1,2}} \put(0,-10){\circle*{1,2}} \put(-5, -8.66){\circle*{1,2}}
 \put(-8.66, -5){\circle*{1,2}} \put(-10, 0){\circle*{1,2}} \put(-8.66, 5){\circle*{1,2}}
 \put(-5, 8.66){\circle*{1,2}}
 \put(0, 12){\makebox(0,0)[c]{\scriptsize 1}} \put(6, 10.39){\makebox(0,0)[c]{\scriptsize 2}}
 \put(10.39, 6){\makebox(0,0)[c]{\scriptsize 3}} \put(12,0){\makebox(0,0)[c]{\scriptsize 4}}
 \put(10.39,-6){\makebox(0,0)[c]{\scriptsize 5}} \put(6,-10.39){\makebox(0,0)[c]{\scriptsize 6}}
 \put(0,-12){\makebox(0,0)[c]{\scriptsize 7}} \put(-6,-10.39){\makebox(0,0)[c]{\scriptsize 8}}
 \put(-10.39,-6){\makebox(0,0)[c]{\scriptsize 9}} \put(-12,0){\makebox(0,0)[c]{\scriptsize 10}}
 \put(-10.39,6){\makebox(0,0)[c]{\scriptsize 11}} \put(-6,10.39){\makebox(0,0)[c]{\scriptsize 12}}
 \qbezier(-10,0)(-5, 5)(0,10)
 \qbezier(5,8.66)(6.83, 6.83)(8.66,5) \qbezier(8.66,5)(4.33, -2.5)(0,-10)
 \qbezier(0,-10)(-4.33,-7.5)(-8.66,-5) \qbezier(-8.66,-5)(-1.83,1.83)(5,8.66)
 \qbezier(8.66,-5)(6.83,-6.83)(5,-8.66) \qbezier(5,-8.66)(0,0)(-5,8.66)\qbezier(-5,8.66)(1.83,1.83)(8.66,-5)
 \qbezier(-5, -8.66)(-6.83, -1.83)(-8.66, 5)
\put(0,-15){\makebox(0,-1)[c]{\scriptsize (a) circular
representation}}
\end{picture}}
\hspace{1.5cm} {\setlength{\unitlength}{1mm}
\begin{picture}(55,20)(0,3)
\put(-2,0){\line(1,0){59}}
\put(0,0){\circle*{1,2}}\put(0,0){\makebox(0,-6)[c]{\scriptsize 1}}
\put(5,0){\circle*{1,2}}\put(5,0){\makebox(0,-6)[c]{\scriptsize 2}}
\put(10,0){\circle*{1,2}}\put(10,0){\makebox(0,-6)[c]{\scriptsize
3}}
\put(15,0){\circle*{1,2}}\put(15,0){\makebox(0,-6)[c]{\scriptsize
4}}
\put(20,0){\circle*{1,2}}\put(20,0){\makebox(0,-6)[c]{\scriptsize
5}}
\put(25,0){\circle*{1,2}}\put(25,0){\makebox(0,-6)[c]{\scriptsize
6}}
\put(30,0){\circle*{1,2}}\put(30,0){\makebox(0,-6)[c]{\scriptsize
7}}
\put(35,0){\circle*{1,2}}\put(35,0){\makebox(0,-6)[c]{\scriptsize
8}}
\put(40,0){\circle*{1,2}}\put(40,0){\makebox(0,-6)[c]{\scriptsize
9}}
\put(45,0){\circle*{1,2}}\put(45,0){\makebox(0,-6)[c]{\scriptsize
10}}
\put(50,0){\circle*{1,2}}\put(50,0){\makebox(0,-6)[c]{\scriptsize
11}}
\put(55,0){\circle*{1,2}}\put(55,0){\makebox(0,-6)[c]{\scriptsize
12}}
\qbezier(0, 0)(22.5, 20)(45, 0) \qbezier(5, 0)(7.5, 7)(10, 0)
\qbezier(10,0)(20,13)(30,0) \qbezier(30,0)(35,10)(40,0)
\qbezier(20,0)(22.5,7)(25,0) \qbezier(25,0)(40,20)(55,0)
\qbezier(35,0)(42.5,11)(50,0)
\put(26,-5){\makebox(0,-6)[c]{\scriptsize (b) linear
representation}}
\end{picture}
}
\end{center}
\vspace{1.0cm} \caption{{\small Representations of the partition
$\pi=1\,10/2\,3\,7\,9/4/5\,6\,12/8\,11$}}
\label{fig:representations}
\end{figure}
These representations suggest two natural combinatorial parameters
on set partitions: the number of pairs of  crossing chords (resp.,
arcs) in the circular (resp., linear) representation,
denoted~$\croc$ (resp.,~$\crol$). For instance, if $\pi$ is the set
partition represented in Figure~\ref{fig:representations}, then
$\croc(\pi)=9$ and $\crol(\pi)=4$. Throughout this paper, the set of
all partitions of~$[n]$ will be denoted by~$\Pi_n$ and we
let~$\Pi_{n}^k$ denote the set of all partitions of~$[n]$
into~$k$~blocks.

A set partition each block of which has exactly two elements is 
often called a complete matching and its circular representation 
is often called a chord diagram. There has been significant interest 
(e.g., see~\cite{Tou,Ri,FlN,JosRu} and the
references there) in studying the distribution of the parameter
$\croc$ and $\crol$ in complete matchings (note that $\croc=\crol$ on the set  of matchings). Notably, a
remarkable exact counting formula (often called the Touchard-Riordan
formula) in terms of the ballot numbers was implicit in the work of
Touchard~\cite{Tou} and made explicit later by Riordan~\cite{Ri},
and Flajolet and Noy~\cite{FlN} proved that the distribution of the
parameter $\croc$ in a random chord diagram consisting of $n$ chords
is asymptotically Gaussian as $n\to\infty$.

 The enumeration of (general) set partitions by the parameter $\crol$ has also received
considerable interest (e.g., see~\cite{Bi,JosRu,KaZe,KaStZe}). This
is partly due to the fact that this parameter arises in the
combinatorial theory of continued fractions and of a natural
$q$-analog of Charlier polynomials~\cite{Bi,JosRu,KaStZe}. Moreover,
there exist combinatorial parameters on set partitions which have
the same distribution as the parameter $\crol$ over each~$\Pi_n^k$.
This is the case for the number of nestings of two arcs~\cite{KaZe}
and the major index for set partitions introduced in~\cite{ChenGe}.
 A classical result is that the number of noncrossing partitions (partitions $\pi$ with $\crol(\pi)=0$, or
equivalently, with $\croc(\pi)=0$) of $[n]$ is the Catalan number
$C_n=\frac{1}{n+1}{2n\choose n}$.
 Consider the enumerating polynomial $T_{n,k}(q)$ defined by
\begin{equation}\label{eq:def Tnk}
 T_{n,k}(q)=\sum_{\pi\in\Pi_n^k}q^{\crol(\pi)}.
\end{equation}
Biane~\cite{Bi} proved the Jacobi type continued fraction expansion
\begin{align*}
{\sum_{n\geq k\geq 0} T_{n,k}(q)\, a^k
t^n}&=\frac{1\;|}{|1-(a+[0]_q)z}-\frac{ a [1]_q
z^2\;|}{|1-(a+[1]_q)z}-\frac{a [2]_q z^2\;|}{|1-(a+[2]_q)z}
 -\cdots
\end{align*}
Stanton, Zeng and the author~\cite{KaStZe} obtained the expansion
(see~Equation~28 in~\cite{KaStZe})
\begin{align}\label{eq:gf_crol}
\sum_{n\geq k\geq 0} T_{n,k}(q)\,a^k\,t^n&=\sum_{k=0}^\infty
\frac{(aqt)^k}{\prod_{i=1}^k(q^i-q^i[i]_qt+a(1-q)[i]_qt)},
\end{align}
from which they derived the remarkable formula (see~Equation~32 in~\cite{KaStZe})
\begin{align}
\begin{split}\label{eq:distcr}
T_{n,k}(q)&=\sum_{j=1}^k (-1)^{k-j} \frac{[j]_q^n}{[j]_q!} q^{-kj}\, B_{n,k,j}(q),\quad\text{with}\\
&\qquad B_{n,k,j}(q)=\sum_{i=0}^{k-j} \frac{(1-q)^{i}}{[k-j-i]_{q}!}
 q^{\binom{k-j-i+1}{2}}\biggl( \binom{n}{i} q^{j}+\binom{n}{i-1}\biggr).
\end{split}
\end{align}
Here, as usual, $[a]_q!=[1]_q[2]_q\cdots[a]_q$ with
$[a]_q=\frac{1-q^a}{1-q}=1+q+\cdots+q^{a-1}$. Another remarkable
formula (see~\eqref{eq:JosRub}) for $T_{n,k}(q)$ was also recently established
by Josuat-Verg\`es and Rubey~\cite{JosRu}:
\begin{align}
\begin{split}\label{eq:JosRub} 
 T_{n,k}(q)&= \frac{1}{(1-q)^{n-k}}\sum_{j=0}^{k}\sum_{i=j}^{n-k} A_{i,j}(q) ,\quad\text{where}\\
A_{i,j}(q)&=(-1)^{i}\left(
{n\choose k+i}  {n\choose k-j} - {n\choose k+i+1}  {n\choose
k-j-1}\right){i\brack j}_q q^{{j+1 \choose 2}} 
\end{split}
\end{align}
and, as usual, the Gaussian coefficient ${i\brack j}_q$ is given by ${i\brack j}_q=\frac{[i]_q!}{[j]_q![i-j]_q!}$.

Note that if
seems extremely hard to get a convenient expression for the
distribution of $\crol$ (i.e., for any $j$, formulas  that give
the numbers of partitions of $[n]$ satisfying $\crol(\pi)=j$)
from the above formulas. Furthermore,  except for the formula for
the number of noncrossing partitions, it seems that nothing else is
known about the ``exact'' enumeration of set partitions by the
parameter $\croc$. In such a situation, it is customary to look for
asymptotic approximations (see e.g.~\cite[Chapter 4]{Sac} for an accurate study 
of the blocks of a random set partition or more recently~\cite{Knopf} for the study of records). 
A first step  was taken in~\cite{KaMean} 
where the author has obtained exact and asymptotic formulas for the
average values of the parameters $\crol$ and $\croc$ in a random set
partition. The present paper is mainly devoted to providing further
properties of the asymptotic behavior of these parameters. Our
approach is straightforward (although sometimes tedious):  most of
the results presented in this paper are derived from the generating
functions~\eqref{eq:gf_crol}-\eqref{eq:distcr} by using standard
methods of analytic combinatorics. We will also determine the maximum values 
of the parameter $\crol$ and $\croc$  on $\Pi_n$ and $\Pi_n^k$.

Before stating our results we need to set up some notation. We let
$X_{n}$ and $X_{n,k}$ (resp., $Y_n$ and $Y_{n,k}$) denote the random
variables equal to the value of  $\crol$ (resp., $\croc$)  taken,
respectively, over $\Pi_n$  and $\Pi_{n,k}$ endowed with the uniform
probability distribution, i.e., for any nonnegative integer $t$,
\begin{align*}
Pr\left( X_{n}=t \right) &=\frac{\# \{\pi\in\Pi_n\,:\,\crol(\pi)=t\}  }{B_n},\\
Pr\left( X_{n,k}=t \right)& =\frac{\#
\{\pi\in\Pi_n^k\,:\,\crol(\pi)=t\}  }{S_{n,k}},
\end{align*}
with similar statements for $Y_n$ and $Y_{n,k}$. Here $B_n$, the
$n$-th \emph{Bell number}, is the cardinality of~$\Pi_n$,
and~$S_{n,k}$, the $(n,k)$-th \emph{Stirling number of the second
kind}, is the cardinality of~$\Pi_n^k$.  As usual, the mean and the
variance of a random variable~$Z$ will be denoted by $\E(Z)$ and
$\Var(Z)$, and we use $\overset{d}{\longrightarrow}$ (resp.,
$\overset{p}{\longrightarrow}$) for convergence in distribution
(resp., in probability). We recall that a sequence of random
variables $\left(Z_{n}\right)_{n\geq 1}$ is said to be
\textit{asymptotically Gaussian} if
$\left(Z_n-\E(Z_n)\right)/\sqrt{\Var(Z_n)}\overset{d}{\longrightarrow}
\N(0, 1)$ as $n\to\infty$, where $\N(0,1)$ is the standard normal
distribution. In the rest of this paper, we take as granted the
elementary asymptotic calculus with usual Landau notations.

\subsection{Main results}

Let us first recall the closed form expressions for $\E(X_{n,k})$
and $\E(Y_{n,k})$ recently obtained  by the author.
\begin{thm}(Kasraoui~\cite{KaMean})\label{thm:exactmean-crolcroc}
For all integers $n\geq k\geq 1$,  we have
\begin{align}
\E(X_{n,k})&=\frac{1}{2}n(k-1)-\frac{5}{4}k(k-1)+\frac{3}{2}(n+1-k)\frac{S_{n,k-1}}{S_{n,k}},\label{eq:meanblock-crol}\\
\E(Y_{n,k})&=\frac{1}{2}n(k-1)-\frac{1}{2}n(4n-5k+1)\frac{S_{n-1,k-1}}{S_{n,k}}
-10\binom{n}{2}\frac{S_{n-2,k-2}}{S_{n,k}}\label{eq:meanblock-croc}\\
            &\quad +\binom{n}{4}\frac{S_{n-4,k-2}}{S_{n,k}}.\nonumber
\end{align}
\end{thm}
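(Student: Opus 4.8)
The plan is to reduce everything to a first-moment computation. Since $T_{n,k}(q)=\sum_{\pi\in\Pi_n^k}q^{\crol(\pi)}$, one has $T_{n,k}(1)=S_{n,k}$ and $T_{n,k}'(1)=\sum_{\pi\in\Pi_n^k}\crol(\pi)$, so that $\E(X_{n,k})=T_{n,k}'(1)/S_{n,k}$; likewise $\E(Y_{n,k})=\bigl(\sum_{\pi\in\Pi_n^k}\croc(\pi)\bigr)/S_{n,k}$. Thus it suffices to evaluate the two first moments in closed form and divide by $S_{n,k}$.

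For $\crol$ I would work directly from the bivariate generating function \eqref{eq:gf_crol}. Write $F(a,t,q)=\sum_{k'\ge0}N_{k'}/D_{k'}$ with $N_{k'}=(aqt)^{k'}$ and $D_{k'}=\prod_{i=1}^{k'}\bigl(q^i-q^i[i]_q t+a(1-q)[i]_q t\bigr)$, and apply logarithmic differentiation, $\partial_q F=\sum_{k'}(N_{k'}/D_{k'})\bigl(\partial_qN_{k'}/N_{k'}-\partial_qD_{k'}/D_{k'}\bigr)$, before setting $q=1$. The point is that at $q=1$ every factor degenerates to $1-it$ (the $a$-term dies because $1-q=0$), so $F|_{q=1}=\sum_{k'}(at)^{k'}/\prod_{i=1}^{k'}(1-it)=\sum_{n,k'}S_{n,k'}a^{k'}t^n$ by the classical identity $\sum_{n\ge k}S_{n,k}t^n=t^k/\prod_{i=1}^k(1-it)$, recovering $T_{n,k}(1)=S_{n,k}$. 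Expanding each factor to first order in $q-1$ via $q^i[i]_q=i+\tfrac12 i(3i-1)(q-1)+O((q-1)^2)$ and $(1-q)[i]_q=-i(q-1)+O((q-1)^2)$ turns $\partial_qF|_{q=1}$ into an explicit sum of rational functions of $t$ times powers of $a$; since the surviving $a$-dependence sits only in the term $-ait/(1-it)$, the $a$-degree is shifted by at most one, so $[a^k]$ receives contributions from $k'=k$ and $k'=k-1$. Extracting $[t^n]$ with the same Stirling generating function, now with repeated factors $1/(1-it)$ producing double poles (hence a factor linear in $n$), should collapse the weighted sums, via the recurrence $S_{n,k}=kS_{n-1,k}+S_{n-1,k-1}$, into the combination of $S_{n,k}$ and $S_{n,k-1}$ recorded in \eqref{eq:meanblock-crol}; the $k'=k-1$ block is precisely the origin of the $S_{n,k-1}/S_{n,k}$ term and of its coefficient $\tfrac32(n+1-k)$.

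For $\croc$ no generating function is at hand, so I would instead compute $\sum_{\pi}\croc(\pi)$ by linearity of expectation over potential crossings. Two chords cross only if they lie in distinct blocks and have four distinct, cyclically interleaved endpoints, and each $4$-subset of $[n]$ determines a unique interleaving pairing; hence $\sum_\pi\croc(\pi)$ is the sum over such configurations of the number of partitions in which each pair is cyclically adjacent within its block. The generic case, in which both pairs are themselves $2$-element blocks, contributes exactly $\binom{n}{4}S_{n-4,k-2}$ and accounts for the last term of \eqref{eq:meanblock-croc}; the remaining cases, where a chord joins cyclically adjacent elements of a larger block, must be handled by inclusion--exclusion over the presence of further block-elements on the relevant cyclic arcs (and over the degenerate versus wrap-around edges). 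After re-expressing the constrained counts as partitions of the unused points, the surviving sums are governed by $S_{n-2,k-2}$, $S_{n-1,k-1}$ and $S_{n,k}$, and summing the elementary positional multiplicities yields the coefficients $10\binom{n}{2}$, $\tfrac12 n(4n-5k+1)$ and $\tfrac12 n(k-1)$.

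I expect the main obstacle to be, in both cases, the final bookkeeping rather than the setup. For $\crol$ it is the coefficient extraction: reorganizing the $q$-differentiated rational generating function into a genuine linear combination of $S_{n,k}$ and $S_{n,k-1}$ requires patient manipulation of the Stirling generating function together with its recurrence, and care with the double poles that generate the polynomial-in-$n$ prefactors. For $\croc$ it is the inclusion--exclusion: correctly enforcing cyclic adjacency within blocks, treating $2$-element blocks and the wrap-around edges of larger blocks separately, and then summing the positional weights so as to recover the exact polynomial coefficients of \eqref{eq:meanblock-croc}. The combinatorial argument for $\croc$ also applies, \emph{mutatis mutandis}, to $\crol$ (linear rather than cyclic adjacency), which provides a useful independent check on \eqref{eq:meanblock-crol}.
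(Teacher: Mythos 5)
First, a point of reference: the paper does not prove Theorem~\ref{thm:exactmean-crolcroc} at all --- it is imported verbatim from~\cite{KaMean}, with the remark (Section~4) that the proof there ``mainly relies on combinatorial arguments'' and that \eqref{eq:mean-crol} (hence \eqref{eq:meanblock-crol}) can alternatively be extracted from the generating function~\eqref{eq:gf_crol}. Your two-pronged plan therefore matches the two known routes: the logarithmic-differentiation of~\eqref{eq:gf_crol} at $q=1$ is precisely the alternative derivation the paper alludes to, and is carried out one derivative higher in Section~2 (Proposition~\ref{prop:2nd derivative}, via~\eqref{eq:f'surf}); your linearity-of-expectation argument for $\croc$ is in the spirit of the combinatorial $Z$-parameter method of~\cite{KaMean} (cf.~\eqref{eq:cro-Zproperty}), though you decompose over pairs of chords rather than over pairs of blocks. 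The skeleton for \eqref{eq:meanblock-crol} is essentially right: the $a$-linear term $a(1-q)[i]_qt$ is indeed what shifts contributions to $k'=k-1$, and the double poles in $1/(1-it)$ are what produce the factor linear in $n$. One small correction: the $S_{n,k-1}$ term is \emph{not} sourced solely by the $k'=k-1$ block. The $1/t$ and $1/t^2$ terms visible in~\eqref{eq:f'surf} shift $t^n$-coefficients of the $k'=k$ block to $S_{n+1,k}$, which then feeds $S_{n,k-1}$ through $S_{n+1,k}=S_{n,k-1}+kS_{n,k}$; this is where part of the coefficient $\tfrac32(n+1-k)$ comes from, and ignoring it would not reproduce the constant $\tfrac32$.

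The genuine gap is in the $\croc$ half. You correctly isolate the clean contribution $\binom{n}{4}S_{n-4,k-2}$ from two $2$-element blocks, but everything else in \eqref{eq:meanblock-croc} --- the terms $\tfrac12 n(k-1)$, $-\tfrac12 n(4n-5k+1)S_{n-1,k-1}/S_{n,k}$ and $-10\binom{n}{2}S_{n-2,k-2}/S_{n,k}$ --- must come out of the deferred ``inclusion--exclusion over the presence of further block-elements on the relevant cyclic arcs,'' and this is where the entire difficulty of the theorem lives. The condition that $\{a,c\}$ be a polygon edge of its block $B$ is that \emph{one of the two circular arcs} between $a$ and $c$ contain no further element of $B$; enforcing this simultaneously for two interleaved pairs, while partitioning the remaining $n-4$ points into the residual blocks, is a constrained count that does not obviously telescope into the four Stirling numbers $S_{n,k}$, $S_{n-1,k-1}$, $S_{n-2,k-2}$, $S_{n-4,k-2}$ with polynomial coefficients. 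You assert that ``summing the elementary positional multiplicities yields the coefficients'' without exhibiting the cancellation mechanism, and the appearance of $S_{n-1,k-1}$ (one point and one block removed) and of the specific polynomial $4n-5k+1$ is not predicted by anything in your sketch. As it stands the proposal is a credible programme for \eqref{eq:meanblock-crol} but only a statement of intent for \eqref{eq:meanblock-croc}; to close it you would need either to execute that inclusion--exclusion in full, or to pass through the block-pair decomposition~\eqref{eq:cro-Zproperty} as in~\cite{KaMean}, which reduces the problem to the tractable case of two-block partitions.
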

Using the well-known summation formula
$S_{n,k}=\frac{1}{k!}\sum_{j=0}^k(-1)^{j}{k\choose j} (k-j)^n$, it
is easy to see that
\begin{align}\label{eq:asymptotic-stirling}
S_{n,k}&=\frac{k^n}{k!}\left(1+o(1)\right) \quad\text{as
$n\rightarrow \infty$ uniformly for $k\leq n/(2\,\log n)$}.
\end{align}
Inserting~\eqref{eq:asymptotic-stirling} in
Theorem~\ref{thm:exactmean-crolcroc}, we immediately obtain the
following result.
 \begin{cor}\label{cor:mean-crolcroc-asympt}
For $1\leq k\leq  n/(2\,\log n)$ and as $n\to\infty$,  we have
\begin{align}
\E(X_{n,k})&=\frac{k-1}{2}\,n-\frac{5}{4}k(k-1)+o\left(1\right),\label{eq:asymptotic E(Xnk)}\\
\E(Y_{n,k})&=\frac{k-1}{2}\,n+o\left(1\right).\label{eq:asymptotic
E(Ynk)}
\end{align}
\end{cor}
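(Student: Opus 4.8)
The plan is to substitute the uniform Stirling estimate \eqref{eq:asymptotic-stirling} into the exact formulas of Theorem~\ref{thm:exactmean-crolcroc} and to verify that everything beyond the advertised polynomial is $o(1)$. The polynomial parts already coincide: the summand $\frac12 n(k-1)-\frac54 k(k-1)$ of \eqref{eq:meanblock-crol} is precisely the main term of \eqref{eq:asymptotic E(Xnk)}, and the summand $\frac12 n(k-1)$ of \eqref{eq:meanblock-croc} is precisely the main term of \eqref{eq:asymptotic E(Ynk)}. Thus the whole statement reduces to showing that the leftover terms — the single term $\frac32(n+1-k)\,S_{n,k-1}/S_{n,k}$ for $X_{n,k}$, and the three terms carrying $S_{n-1,k-1}/S_{n,k}$, $S_{n-2,k-2}/S_{n,k}$ and $S_{n-4,k-2}/S_{n,k}$ for $Y_{n,k}$ — all vanish as $n\to\infty$.

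The key step is to estimate the Stirling ratios. Applying \eqref{eq:asymptotic-stirling} to numerator and denominator — the shifted indices $(n-1,k-1)$, $(n-2,k-2)$, $(n-4,k-2)$ still satisfy the admissibility condition for large $n$ — yields, uniformly and for $k\ge 2$,
\[
\frac{S_{n,k-1}}{S_{n,k}}=k\Bigl(1-\tfrac1k\Bigr)^{n}(1+o(1)),\qquad
\frac{S_{n-1,k-1}}{S_{n,k}}=\Bigl(1-\tfrac1k\Bigr)^{n-1}(1+o(1)),
\]
\[
\frac{S_{n-2,k-2}}{S_{n,k}}=\tfrac{k-1}{k}\Bigl(1-\tfrac2k\Bigr)^{n-2}(1+o(1)),\qquad
\frac{S_{n-4,k-2}}{S_{n,k}}=\tfrac{k-1}{k^{3}}\Bigl(1-\tfrac2k\Bigr)^{n-4}(1+o(1)).
\]
The only way the hypothesis on $k$ enters is through the decay of these exponential factors: from $k\le n/(2\log n)$ we get $n/k\ge 2\log n$, whence the inequality $\log(1-x)\le -x$ gives $(1-1/k)^{n}\le e^{-n/k}\le n^{-2}$ and similarly $(1-2/k)^{n}\le n^{-4}$.

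For $X_{n,k}$ this closes the argument immediately: the correction is $\frac32(n+1-k)\,S_{n,k-1}/S_{n,k}=O\!\bigl(n\,k\,(1-1/k)^{n}\bigr)=O(k/n)=O(1/\log n)=o(1)$, uniformly, so \eqref{eq:asymptotic E(Xnk)} follows. For $Y_{n,k}$ the same bounds dispose of two of the three corrections at once: the $S_{n-2,k-2}$ term is $O(n^{2}\cdot n^{-4})$ and the $S_{n-4,k-2}$ term is $O(n^{4}\cdot k^{-2}\,n^{-4})=O(k^{-2})$, both $o(1)$.

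The main obstacle is the remaining correction $\frac12 n(4n-5k+1)\,S_{n-1,k-1}/S_{n,k}$: its prefactor is of order $n^{2}$, exactly matching the order $n^{-2}$ of the bound on $(1-1/k)^{n}$, so the crude estimate only yields $O(1)$ rather than $o(1)$. This is the delicate point of the corollary. One must therefore exploit that, for $k$ growing strictly more slowly than the threshold $n/(2\log n)$ — in particular for any fixed $k$, where $(1-1/k)^{n}$ decays geometrically, and a fortiori throughout the range $k=o(\sqrt n)$ relevant to the $\croc$ limit theorem — the product $n^{2}(1-1/k)^{n}$ is genuinely $o(1)$, which establishes \eqref{eq:asymptotic E(Ynk)}.
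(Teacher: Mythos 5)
Your route is the paper's route: the paper's entire proof is the sentence ``Inserting~\eqref{eq:asymptotic-stirling} in Theorem~\ref{thm:exactmean-crolcroc}, we immediately obtain the following result,'' and you have simply carried out the substitution in detail. Your four Stirling-ratio estimates and the resulting bounds are all correct.

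What deserves emphasis is that your closing paragraph is not a cosmetic caveat but a genuine observation that the paper glosses over: the word ``immediately'' is not justified for \eqref{eq:asymptotic E(Ynk)} over the full stated range. At the top of the range, $k\sim n/(2\log n)$, one has $(1-1/k)^{n-1}=n^{-2}(1+o(1))$ while the prefactor $-\tfrac12 n(4n-5k+1)$ is $-2n^2(1+o(1))$, so the neglected term in \eqref{eq:meanblock-croc} tends to $-2$, not to $0$; as literally stated, \eqref{eq:asymptotic E(Ynk)} fails there. What is actually needed (and what your argument delivers) is $n/k-2\log n\to\infty$, e.g.\ any $k=o(n/\log n)$, which covers every use the paper makes of this equation (Theorem~\ref{thm:LimitDistcroc-ktoinfty} only requires $k=o(\sqrt n)$). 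By contrast \eqref{eq:asymptotic E(Xnk)} does hold uniformly up to the threshold, exactly as you show, since its correction is $O(k/n)=O(1/\log n)$. One small tightening: your bound $O(k^{-2})$ for the $\binom{n}{4}S_{n-4,k-2}/S_{n,k}$ term is not by itself $o(1)$ when $k$ stays bounded; there you must invoke the geometric decay of $(1-2/k)^{n}$ for fixed $k\ge 3$ (the same two-regime splitting you already use for the problematic term), or note the sharper uniform bound $n^2(n/k)^2e^{-2n/k}\cdot k^{-0}\cdot n^{-4}\le 4(\log n)^2/n^2$ obtained by optimizing in $u=n/k\ge 2\log n$.
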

The above result was explicitly stated for fixed integers $k$
in~\cite{KaMean}. We now present our first non-trivial result which
provides a useful expression for the variance of $X_{n,k}$. Its
proof is given in Section~2.

\begin{thm}\label{thm:ExactMomentsDistcrol_blocs}
For all integers $n\geq k\geq 1$, the variance of $X_{n,k}$
satisfies
\begin{align*}
\Var(X_{n,k})&=\frac{1}{12}(k^2-1) n -\frac{1}{72} k(k-1)(2 k+5)\\
 &\,+\frac{1}{12}\left(-15 n^2 +(56 k-43) n -2(k-1)(14k-1)\right)\frac{S_{n,k-1}}{S_{n,k}}\nonumber\\
 &\,+\frac{1}{12} (n-k+2)(27 n-27 k+1)\frac{S_{n,k-2}}{S_{n,k}}
  -\frac{9}{4}(n+1-k)^2\left(\frac{S_{n,k-1}}{S_{n,k}}\right)^2.\nonumber
\end{align*}
\end{thm}
The above theorem will be essentially derived by means of standard
but very tedious computations and manipulations of the generating
function expansion~\eqref{eq:gf_crol}. Combining the previous
theorem with~\eqref{eq:asymptotic-stirling}, we arrive at the
following result.

\begin{cor}\label{cor:varianceblock-crol-ktoinfty}
For $1\leq k\leq  n/(2\,\log n)$ and as $n\to\infty$, we have
\begin{align}\label{eq:asymptotic Var(Xnk)}
\Var(X_{n,k})=\frac{k^2-1}{12}\,n-\frac{1}{72} k(k-1)(2 k+5)
+o\left(1\right).
\end{align}
\end{cor}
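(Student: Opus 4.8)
The plan is to read the corollary straight off the exact formula of Theorem~\ref{thm:ExactMomentsDistcrol_blocs} by substituting the uniform Stirling estimate~\eqref{eq:asymptotic-stirling}. The first two summands, $\frac{1}{12}(k^2-1)n$ and $-\frac{1}{72}k(k-1)(2k+5)$, are genuine polynomials in $n$ and $k$ carrying no Stirling ratios, so they survive untouched and furnish exactly the main part claimed. Everything therefore reduces to showing that the three remaining summands — those multiplying $S_{n,k-1}/S_{n,k}$, $S_{n,k-2}/S_{n,k}$, and $(S_{n,k-1}/S_{n,k})^2$ — together contribute only $o(1)$, uniformly for $1\le k\le n/(2\log n)$.

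First I would convert~\eqref{eq:asymptotic-stirling} into quotient estimates, namely
\[
\frac{S_{n,k-1}}{S_{n,k}}=k\Bigl(1-\tfrac1k\Bigr)^{\!n}\bigl(1+o(1)\bigr),\qquad \frac{S_{n,k-2}}{S_{n,k}}=k(k-1)\Bigl(1-\tfrac2k\Bigr)^{\!n}\bigl(1+o(1)\bigr),
\]
valid uniformly on the stated range. The only analytic input is the elementary inequality $1-x\le e^{-x}$: since $k\le n/(2\log n)$ forces $n/k\ge 2\log n$, it yields $(1-1/k)^n\le e^{-n/k}\le n^{-2}$ and, similarly, $(1-2/k)^n\le n^{-4}$. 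With these in hand the term $-\frac94(n+1-k)^2(S_{n,k-1}/S_{n,k})^2$ is at most of order $n^2\cdot k^2 n^{-4}=k^2/n^2=o(1)$ because it carries the square of the ratio, and the term $\frac{1}{12}(n-k+2)(27n-27k+1)\,S_{n,k-2}/S_{n,k}$ is $o(1)$ for the same reason, its ratio carrying the far smaller factor $n^{-4}$.

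The step I expect to be the real obstacle is the first ratio term, $\frac{1}{12}\bigl(-15n^2+(56k-43)n-2(k-1)(14k-1)\bigr)S_{n,k-1}/S_{n,k}$: its prefactor is \emph{quadratic} in $n$, whereas the ratio only decays like $k(1-1/k)^n$. This is exactly what makes the variance more delicate than the mean in Corollary~\ref{cor:mean-crolcroc-asympt}, whose analogous error term $\frac{3}{2}(n+1-k)S_{n,k-1}/S_{n,k}$ has only a linear-in-$n$ prefactor and is therefore unconditionally $o(1)$ on the full range. Here the crude bound $(1-1/k)^n\le n^{-2}$ merely gives a product of size $O(k)=O(n/\log n)$, which is not $o(1)$ at the very top of the range, so this term cannot be dispatched by size estimates alone. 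Making it genuinely negligible requires the sharp (near-exponential) decay rate of $S_{n,k-1}/S_{n,k}$, which crushes the $n^2$ prefactor for every fixed $k$ and, more generally, whenever $n/k-2\log n\to+\infty$; alternatively one reads the error as lower order relative to the $\Theta(k^2 n)$ main part. I would therefore isolate this single term and estimate it with that sharper rate, the other two ratio terms and the polynomial main part being routine.
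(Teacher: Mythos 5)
Your route is the paper's route: the paper's entire proof of this corollary is the single sentence ``Combining the previous theorem with~\eqref{eq:asymptotic-stirling}, we arrive at the following result,'' so substituting the uniform Stirling estimate into Theorem~\ref{thm:ExactMomentsDistcrol_blocs} is exactly what is intended, and your treatment of the polynomial part, of the $S_{n,k-2}/S_{n,k}$ term and of the squared-ratio term is correct.

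The step you flag as the real obstacle is not a defect of your argument but of the statement itself: the summand $\tfrac{1}{12}\bigl(-15n^2+(56k-43)n-2(k-1)(14k-1)\bigr)S_{n,k-1}/S_{n,k}$ genuinely fails to be $o(1)$ at the top of the range. Indeed \eqref{eq:asymptotic-stirling} gives $S_{n,k-1}/S_{n,k}=k(1-1/k)^n(1+o(1))$, and for $k=\lfloor n/(2\log n)\rfloor$ one has $(1-1/k)^n=n^{-2}(1+o(1))$ (the correction factor $e^{-n/(2k^2)}$ tends to $1$ there), so this summand is asymptotic to $-\tfrac{5}{4}k=-\tfrac{5n}{8\log n}\to-\infty$; no cancellation is available from the other two ratio terms, which, as you showed, are $O(k^2/n^2)$. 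Consequently the corollary with an absolute $o(1)$ error holds for fixed $k$, and more generally exactly when $n^2k(1-1/k)^n\to 0$, but uniformly over the whole range $1\le k\le n/(2\log n)$ one can only assert an error term $O(k)$ --- which is still of smaller order than both displayed terms (of orders $k^2n$ and $k^3$) and is all that the paper actually uses downstream (e.g.\ $k/\sigma_{n,k}=O(n^{-1/2})$ in Section~3). The paper supplies no argument for this term, so you have not missed a trick it uses; your diagnosis, and your two proposed repairs (shrink the range so that $n/k-2\log n-\log k\to+\infty$, or read the error relatively), are the correct resolution.
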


The two following theorems are probably the most interesting 
results of the present paper. In Section~3, using expression~\eqref{eq:distcr}  and
Curtiss' theorem  for sequences of moment generating functions, we
prove a central limit law for the random variable $X_{n,k}$.
\begin{thm}\label{thm:LimitDistcrol-ktoinfty}
For $2\leq k\leq n/(2\,\log n)$, the distribution of $X_{n,k}$ is
asymptotically Gaussian as $n\to\infty$.
\end{thm}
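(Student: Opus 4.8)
The plan is to apply Curtiss' theorem to the standardized variables $Z_{n,k}:=\bigl(X_{n,k}-\E(X_{n,k})\bigr)/\sigma_{n,k}$, where $\sigma_{n,k}:=\sqrt{\Var(X_{n,k})}$. Since $T_{n,k}(q)/S_{n,k}$ is the probability generating function of $X_{n,k}$, the moment generating function of $Z_{n,k}$ is
\[
M_{n,k}(s)=e^{-s\,\E(X_{n,k})/\sigma_{n,k}}\,\frac{T_{n,k}\bigl(e^{s/\sigma_{n,k}}\bigr)}{S_{n,k}} .
\]
It suffices to prove that, for each fixed real $s$, $M_{n,k}(s)\to e^{s^2/2}$ as $n\to\infty$, uniformly for $2\le k\le n/(2\log n)$; Curtiss' theorem then yields $Z_{n,k}\overset{d}{\longrightarrow}\N(0,1)$ for every admissible sequence $k=k_n$. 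Throughout I write $\theta=s/\sigma_{n,k}$ and $q=e^{\theta}$, and I note that, by Corollary~\ref{cor:varianceblock-crol-ktoinfty}, $\sigma_{n,k}^2=\frac{k^2-1}{12}\,n\,(1+o(1))\to\infty$, so that $\theta\to0$ uniformly in the range.

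The first step is to extract the dominant term of~\eqref{eq:distcr}. Specializing that formula at $j=k$ gives $B_{n,k,k}(q)=q^{k}$, so the $j=k$ summand equals $\frac{[k]_q^{\,n}}{[k]_q!}\,q^{-k(k-1)}$. I would then show that this term dominates: the size of the $j$-th summand relative to it is governed by $\bigl([j]_q/[k]_q\bigr)^{n}$, the remaining factors being only of polynomial order. Because $\theta\to0$ one has $[j]_q/[k]_q=(j/k)\bigl(1+O(\theta)\bigr)$, and here the constraint $k\le n/(2\log n)$ enters in an essential way: it forces $\bigl(1-\tfrac1k\bigr)^{n}\le e^{-n/k}\le n^{-2}$, which is precisely the threshold that makes the subdominant terms vanish. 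Bounding the factors $q^{-kj}$ and $B_{n,k,j}(q)$ by means of $1-q=O(\theta)=O(n^{-1/2})$, so that $(1-q)^{i}\binom{n}{i}=O(n^{i/2})$ in~\eqref{eq:distcr}, one finds that each summand with $j<k$ is $O\bigl(n^{-(k-j)/2}\bigr)$ relative to the leading one, whence
\[
T_{n,k}(q)=\frac{[k]_q^{\,n}}{[k]_q!}\,q^{-k(k-1)}\bigl(1+O(n^{-1/2})\bigr)
\]
uniformly in the range. I expect this uniform domination estimate to be the main obstacle, since it must be carried out for $k$ growing with $n$ all the way up to the boundary $n/(2\log n)$, where the margin in $\bigl(1-\tfrac1k\bigr)^{n}\le n^{-2}$ is tightest.

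The second step is a Taylor expansion of the leading term. From $\log\frac{e^{a\theta}-1}{\theta}=\log a+\frac{a}{2}\theta+\frac{a^{2}}{24}\theta^{2}+O(a^{4}\theta^{4})$ (the cubic term vanishes), applied with $a=k$ and summed over the factors of $[k]_q!$, I obtain
\[
n\log[k]_q=n\log k+\tfrac{k-1}{2}n\theta+\tfrac{k^{2}-1}{24}n\theta^{2}+o(1),\qquad
\log\frac{[k]_q!}{k!}=\tfrac{k(k-1)}{4}\theta+o(1),
\]
the errors being $o(1)$ uniformly because $n k^{4}\theta^{4}=O(1/n)$ and $k^{3}\theta^{2}=O(k/n)$. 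Combining these with $q^{-k(k-1)}=e^{-k(k-1)\theta}$ and the Stirling estimate $S_{n,k}=\frac{k^{n}}{k!}(1+o(1))$ from~\eqref{eq:asymptotic-stirling} gives
\[
\log\frac{T_{n,k}(q)}{S_{n,k}}
=\Bigl(\tfrac{k-1}{2}n-\tfrac{5}{4}k(k-1)\Bigr)\theta+\tfrac{k^{2}-1}{24}n\theta^{2}+o(1).
\]
By Corollary~\ref{cor:mean-crolcroc-asympt} the coefficient of $\theta$ equals $\E(X_{n,k})+o(1)$, while $\frac{k^{2}-1}{24}n\theta^{2}=\frac{s^{2}}{2}(1+o(1))$ by the variance asymptotics of Corollary~\ref{cor:varianceblock-crol-ktoinfty}. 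Since $\theta\to0$, multiplying by $e^{-s\,\E(X_{n,k})/\sigma_{n,k}}=e^{-\E(X_{n,k})\theta}$ cancels the linear term and leaves $\log M_{n,k}(s)=\frac{s^{2}}{2}+o(1)$. Thus $M_{n,k}(s)\to e^{s^{2}/2}$ for every fixed $s$, uniformly in $k$, and Curtiss' theorem completes the proof.
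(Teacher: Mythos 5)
Your proposal is correct and follows essentially the same route as the paper: standardize, apply Curtiss' theorem, isolate the dominant $j=k$ term of~\eqref{eq:distcr} using the decay of $\bigl([j]_q/[k]_q\bigr)^n\approx e^{-n(k-j)/k}$ guaranteed by $k\le n/(2\log n)$, and Taylor-expand $[k]_q^n/[k]_q!$ to recover the mean and variance asymptotics. The only (immaterial) differences are bookkeeping ones: you bound the subdominant summands term by term rather than via the paper's monotonicity argument for $r_{n,k,j}(u)$, and you expand $\log\bigl([k]_q!/k!\bigr)$ directly instead of quoting Sachkov's Bernoulli-number formula.
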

 Let us mention that it seems that the asymptotic distribution of~$X_{n,k}$ for
other types of interesting ranges for $k$ (for instance, $k\sim
a\,n$ with $0<a<1$) can be analyzed with a similar method. However,
the asymptotic analysis of the moment generating function
of~$X_{n,k}$ (see Section~3) seems much more difficult for $k\sim
a\,n$ than for $k\leq n/(2\,\log n)$. This explains why, in this
paper, we have limited our study to the latter range.

We now turn our attention to the random variable $Y_{n,k}$. In
contrast to the parameter~$\crol$, we have no convenient expression
for the distribution of~$\croc$. Nonetheless, exploiting the
combinatorial ``closeness'' of the parameters~$\croc$ and~$\crol$
(see Lemma~\ref{lem:croc-crol} for a precise statement) and the
results on the distribution of $X_{n,k}$ expounded above, we have
managed to obtain interesting (but, necessarily, weaker) results on the asymptotic
distribution of $Y_{n,k}$. The following result is demonstrated in
Section~5.1.
\begin{thm}\label{thm:LimitDistcroc-ktoinfty}
 For $k=o\left(\sqrt{n}\right)$ and as $n\to\infty$,
\begin{itemize}
\item[(i)] the variance of the distribution of $Y_{n,k}$ satisfies
\begin{align}\label{eq:asymptotic Var(Ynk)}
\Var(Y_{n,k})=\frac{k^2-1}{12}\, n+ O(k^3\,\sqrt{n}),
\end{align}
\item[(ii)] the distribution of $Y_{n,k}$ is asymptotically Gaussian.
\end{itemize}
\end{thm}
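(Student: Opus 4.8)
The plan is to transfer the results already established for $X_{n,k}$ to $Y_{n,k}$ through the difference statistic. Write $D_{n,k}=Y_{n,k}-X_{n,k}$, the random variable equal to $\croc(\pi)-\crol(\pi)$ taken over $\Pi_n^k$. By Lemma~\ref{lem:croc-crol} this difference is nonnegative and admits a transparent combinatorial description: it records the crossings that involve the ``extra'' chords present in the circular representation but absent from the linear one, namely the chords that close each block into a polygon. The whole argument rests on showing that, although a single such closing chord may cross $\Theta(n)$ other chords in the worst case, its typical contribution is small, so that $D_{n,k}$ is, in a mean-square sense, negligible next to the standard deviation of $X_{n,k}$.

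First I would pin down the first moment: combining Lemma~\ref{lem:croc-crol} with Corollary~\ref{cor:mean-crolcroc-asympt} gives $\E(D_{n,k})=\E(Y_{n,k})-\E(X_{n,k})=\tfrac54 k(k-1)+o(1)=O(k^2)$. The heart of the proof is then the second-moment estimate
\[
\Var(D_{n,k})=O(k^4)\qquad(\text{equivalently }\E(D_{n,k}^2)=O(k^4)).
\]
I would obtain this by expanding $D_{n,k}$ as a sum over blocks of the number of crossings carried by each closing chord, writing $\E(D_{n,k}^2)$ as a double sum over ordered pairs of blocks, and bounding each term via moment estimates for a uniformly random partition of $[n]$ into $k$ blocks (controlling, for each closing chord, the number of arcs with exactly one endpoint inside the interval spanned by its block). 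This is the step I expect to be the main obstacle: there is no exact generating-function handle on $\croc$ analogous to~\eqref{eq:distcr}, so the bound must be extracted combinatorially, and the diagonal and off-diagonal contributions of the double sum have to be balanced carefully so that the final order is genuinely $k^4$ and not, say, $k^4\log^2 k$ — the extra logarithmic factors that a crude count of arcs merely \emph{touching} the exterior interval would produce must be eliminated (they would spoil part~(i)).

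Granting $\Var(D_{n,k})=O(k^4)$, both conclusions follow quickly. For part~(i), since $Y_{n,k}-\E(Y_{n,k})=(X_{n,k}-\E(X_{n,k}))+(D_{n,k}-\E(D_{n,k}))$, Minkowski's inequality gives $\bigl|\sqrt{\Var(Y_{n,k})}-\sqrt{\Var(X_{n,k})}\bigr|\le\sqrt{\Var(D_{n,k})}=O(k^2)$; since $\sqrt{\Var(X_{n,k})}=O(k\sqrt n)$ by Corollary~\ref{cor:varianceblock-crol-ktoinfty}, squaring yields
\[
\Var(Y_{n,k})=\Var(X_{n,k})+O(k^3\sqrt n)=\frac{k^2-1}{12}\,n+O(k^3\sqrt n),
\]
where the cross term $2\sqrt{\Var(X_{n,k})}\cdot O(k^2)=O(k^3\sqrt n)$ is exactly what produces the stated error (and the hypothesis $k=o(\sqrt n)$ keeps it below the main term).

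For part~(ii), I would standardize and decompose
\[
\frac{Y_{n,k}-\E(Y_{n,k})}{\sqrt{\Var(Y_{n,k})}}=\frac{\sqrt{\Var(X_{n,k})}}{\sqrt{\Var(Y_{n,k})}}\cdot\frac{X_{n,k}-\E(X_{n,k})}{\sqrt{\Var(X_{n,k})}}+\frac{D_{n,k}-\E(D_{n,k})}{\sqrt{\Var(Y_{n,k})}}.
\]
Part~(i) gives $\sqrt{\Var(X_{n,k})}/\sqrt{\Var(Y_{n,k})}\to1$; Theorem~\ref{thm:LimitDistcrol-ktoinfty} applies because $k=o(\sqrt n)$ forces $k\le n/(2\log n)$ for $n$ large, so $(X_{n,k}-\E(X_{n,k}))/\sqrt{\Var(X_{n,k})}\overset{d}{\longrightarrow}\N(0,1)$; and the remainder has variance $\Var(D_{n,k})/\Var(Y_{n,k})=O(k^2/n)\to0$, hence tends to $0$ in probability by Chebyshev's inequality (here $k=o(\sqrt n)$ is used once more). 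Slutsky's theorem then yields $(Y_{n,k}-\E(Y_{n,k}))/\sqrt{\Var(Y_{n,k})}\overset{d}{\longrightarrow}\N(0,1)$, which is part~(ii).
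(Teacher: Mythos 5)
Your deductive skeleton is exactly the paper's: transfer everything through the difference $D_{n,k}=Y_{n,k}-X_{n,k}$, get part~(i) from $\Var(D_{n,k})=O(k^4)$ together with $\Var(X_{n,k})=\Theta(k^2n)$ (your Minkowski computation is equivalent to the paper's Cauchy--Schwarz bound on $\Cov(X_{n,k},D_{n,k})$ and produces the same $O(k^3\sqrt n)$ error), and get part~(ii) by the decomposition $\tilde Y_{n,k}=B_{n,k}\tilde X_{n,k}+C_{n,k}$ with Slutsky. All of that is correct and matches Section~5.1 verbatim.

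The problem is that the one estimate you single out as ``the heart of the proof'' and ``the main obstacle'' --- $\E(D_{n,k}^2)=O(k^4)$ --- is left entirely unexecuted, and the route you sketch for it (a double sum over pairs of blocks with moment estimates for a uniformly random partition, worrying about stray logarithmic factors) is both unnecessary and based on a misdiagnosis. Lemma~\ref{lem:croc-crol}, which you cite but use only for nonnegativity, is a \emph{two-sided deterministic} bound: $\crol(\pi)\le\croc(\pi)\le\crol(\pi)+2k(k-1)$ for \emph{every} $\pi\in\Pi_n^k$. Hence $0\le D_{n,k}\le 2k(k-1)$ pointwise and $\E(D_{n,k}^2)\le 4k^2(k-1)^2=O(k^4)$ with no probabilistic input whatsoever. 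Your worst-case worry is also unfounded: the closing chord $e_i$ joining $\min(B_i)$ to $\max(B_i)$ cannot cross $\Theta(n)$ chords, because each other block $B_j$ traces a convex polygon inscribed in the circle, whose boundary a straight chord meets in at most two points; so $e_i$ picks up at most $2(k-1)$ crossings, and summing over $i$ gives the $2k(k-1)$ of the lemma. Once you replace your proposed second-moment computation by this pointwise bound, your argument closes and coincides with the paper's.
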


It seems more difficult to deal with the asymptotic distribution
of~$X_n$ and~$Y_n$. Actually, numerical evidence
(see~Figure~\ref{fig:histo_crol}) naturally lead to the following conjecture.
\begin{conj}
The distributions of $X_n$ and $Y_n$ are asymptotically Gaussian.
\end{conj}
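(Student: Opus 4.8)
The plan is to realise $X_n$ as a \emph{mixture} of the block-conditioned variables $X_{n,k}$ and to combine the conditional central limit theorem already in hand (Theorem~\ref{thm:LimitDistcrol-ktoinfty}) with the classical Gaussian behaviour of the number of blocks of a uniform random set partition. Let $K_n$ denote the random number of blocks of a uniform element of $\Pi_n$; then the conditional law of $X_n$ given $K_n=k$ is exactly that of $X_{n,k}$, so that
\[
X_n=\E(X_n\mid K_n)+W_n,\qquad W_n:=X_n-\E(X_n\mid K_n),
\]
where, conditionally on $K_n=k$, the variable $W_n$ is asymptotically $\N\!\left(0,\frac{k^2-1}{12}\,n\right)$ by Corollary~\ref{cor:varianceblock-crol-ktoinfty} and Theorem~\ref{thm:LimitDistcrol-ktoinfty}. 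Moreover, starting from the exact mean formula~\eqref{eq:meanblock-crol} one checks that, up to fluctuations of smaller order, the conditional mean $\E(X_n\mid K_n)$ equals the affine function $\frac{K_n-1}{2}\,n$. First I would invoke the classical fact (a theorem of Harper; see~\cite{Sac}) that $K_n$ is itself asymptotically Gaussian, concentrated about $k^\ast\sim n/\log n$ with standard deviation $\sigma_n$ of order $\sqrt n/\log n$.

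The crucial structural observation is that in this decomposition the two sources of randomness contribute at the \emph{same} scale, so concentration of $K_n$ alone cannot close the argument. By the law of total variance $\Var(X_n)=\E[\Var(X_n\mid K_n)]+\Var[\E(X_n\mid K_n)]$, and a quick estimate gives $\E[\Var(X_n\mid K_n)]\sim\frac{1}{12}\,n\,(k^\ast)^2$, of order $n^3/(\log n)^2$, while $\Var[\E(X_n\mid K_n)]\sim\frac{n^2}{4}\,\Var(K_n)$ is of the very same order. The limit law genuinely records both effects. The next step is therefore to prove \emph{joint} asymptotic normality of the pair $\bigl(K_n,\,W_n\bigr)$. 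Since $K_n=k^\ast(1+O_p(1/\sqrt n))$, the conditional variance $\frac{K_n^2-1}{12}\,n$ is deterministic to leading order, so the rescaled $W_n$ should be asymptotically standard normal and asymptotically independent of $K_n$; writing the standardised $X_n$ as the affine combination $\frac n2(K_n-\E K_n)+W_n$ of two asymptotically independent Gaussians then yields the conclusion. Rigorously I would run this through a truncation of $K_n$ to a window $[k^\ast-L\sigma_n,\,k^\ast+L\sigma_n]$, bounding the negligible tail contribution to the characteristic function and letting $L\to\infty$ at the end.

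The hard part, and the reason the statement is posed as a conjecture, is that the typical number of blocks $k^\ast\sim n/\log n$ lies \emph{outside} the range $k\le n/(2\log n)$ for which Theorem~\ref{thm:LimitDistcrol-ktoinfty} and Corollaries~\ref{cor:mean-crolcroc-asympt}--\ref{cor:varianceblock-crol-ktoinfty} are established. The bottleneck is the uniform Stirling estimate~\eqref{eq:asymptotic-stirling}, which is valid only up to $n/(2\log n)$; to reach the bulk window $k\asymp n/\log n$ one must replace it by the sharper Moser--Wyman saddle-point asymptotics for $S_{n,k}$ and re-run, throughout this wider range, the moment generating function analysis of Section~3 built on~\eqref{eq:distcr}. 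This is precisely the regime the authors flag as ``much more difficult'', so I expect it to be the principal obstacle. For $Y_n$ I would attempt the identical mixture scheme, but the conditional central limit theorem for $Y_{n,k}$ is available only for $k=o(\sqrt n)$ (Theorem~\ref{thm:LimitDistcroc-ktoinfty}), far below $k^\ast$; the most promising route is then to transfer the result from $X_n$ via the combinatorial closeness of $\croc$ and $\crol$ (Lemma~\ref{lem:croc-crol}), provided one can show the difference $Y_n-X_n$ fluctuates on a scale smaller than $n^{3/2}/\log n$. Establishing that last bound is the main additional obstacle for the circular parameter.
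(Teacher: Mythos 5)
First, a point of order: this statement is posed in the paper as a conjecture, the author explicitly states it has not been proved, and the paper only establishes the weaker concentration result $X_n/\E(X_n)\overset{p}{\longrightarrow}1$ and $Y_n/\E(Y_n)\overset{p}{\longrightarrow}1$ via Chebyshev's inequality from Theorems~\ref{thm:mean-crolcroc1}, \ref{thm:MomentsDistcrol} and~\ref{thm:MomentsDistcroc}. So there is no proof in the paper to compare against, and what you have written is a programme rather than a proof --- as you yourself acknowledge. The mixture decomposition over the number of blocks $K_n$ is the natural route, and your bookkeeping is consistent with the paper: with Harper's theorem giving $\Var(K_n)\sim n/(\log n)^2$, the two terms in the law of total variance are $\E[\Var(X_n\mid K_n)]\sim n^3/(12(\log n)^2)$ and $\Var[\E(X_n\mid K_n)]\sim n^3/(4(\log n)^2)$, whose sum matches~\eqref{eq:MomentsDistcrol}; this correctly shows that the block fluctuation cannot be neglected and that the limit, if Gaussian, genuinely mixes two sources of randomness.

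The gaps, however, are genuine and each is a substantial piece of work, so the proposal cannot stand as a proof. (i) Theorem~\ref{thm:LimitDistcrol-ktoinfty} covers only $k\le n/(2\log n)$, whereas $K_n$ concentrates near $n/\log n$; extending the conditional CLT to the bulk requires replacing~\eqref{eq:asymptotic-stirling} by Moser--Wyman saddle-point asymptotics and redoing the entire Section~3 analysis, which the author flags as much more difficult. (ii) Even granted a conditional CLT for each $k$ in the bulk window, you need it \emph{uniformly} in $k$ (or uniform control of the characteristic functions) to pass the limit through the mixture; pointwise convergence along each sequence $k=k(n)$ does not suffice. (iii) The joint asymptotic normality and asymptotic independence of $(K_n,W_n)$, and the claim that $\E(X_n\mid K_n)-\tfrac{K_n-1}{2}n$ is of smaller order, are asserted rather than derived; the latter involves $S_{n,K_n-1}/S_{n,K_n}$ in precisely the unproven range. (iv) For $Y_n$, the transfer needs the fluctuations of $Y_n-X_n$ to be $o\bigl(n^{3/2}/\log n\bigr)$, but Lemma~\ref{lem:croc-crol} only gives the deterministic bound $0\le Y_n-X_n\le 2K_n(K_n-1)=O_p\bigl(n^2/(\log n)^2\bigr)$, and even the paper's variance bound $\Var(Y_n-X_n)=O\bigl(n^4/(\log n)^4\bigr)$ exceeds $\Var(X_n)\asymp n^3/(\log n)^2$ by a factor $n/(\log n)^2$; so the closeness lemma alone cannot close the argument and a genuine cancellation estimate would be required. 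Until these four steps are carried out, the statement remains, as in the paper, a conjecture.
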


\begin{figure}[h]
\centerline{
\includegraphics[width=5cm,height=8cm]{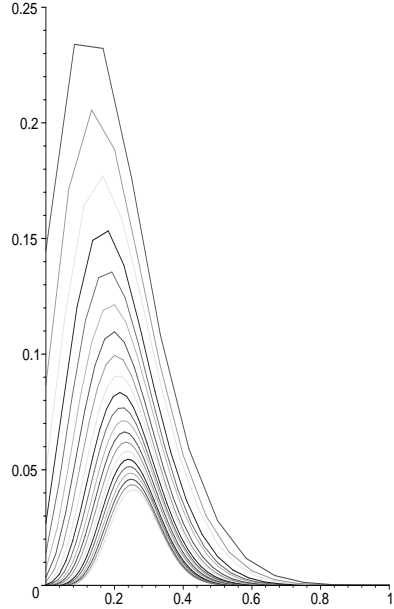}}
\caption{Plots of the distributions of $\crol$ on $\Pi_n$ for $n =
10, \ldots, 30$. The horizontal axis is normalized (by a factor of
$1/a_n$ with $a_n=\left\lfloor\frac{1}{3}{n-1\choose
2}\right\rfloor$) and rescaled to 1, so that the curves display
$Pr\left(\frac{X_n}{a_n}= x\right)$, for $ x = 0, 1/a_n , 2/a_n ,
\ldots$}\label{fig:histo_crol}
\end{figure}
We have not been able to prove this conjecture. We can, however,
show that the distributions of~$X_n$ and~$Y_n$ are concentrated
around their mean. We first recall the approximation of~$\E(X_{n})$
and~$\E(Y_{n})$ recently obtained by the author.
 \begin{thm}(Kasraoui~\cite{KaMean})\label{thm:mean-crolcroc1}
 As $n\rightarrow\infty$, the means $\E(X_{n})$ and $\E(Y_{n})$ are both equal to
\begin{align*}
\frac{n^2}{2\log n}\left( 1+\frac{\log\log n}{\log
n}\left(1+o\left(1\right)\right) \right).
\end{align*}
\end{thm}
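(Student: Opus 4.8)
The plan is to obtain both means by averaging the per-block formulas of Theorem~\ref{thm:exactmean-crolcroc} over the number of blocks, and then to reduce everything to the two-term asymptotics of a single quantity: the ratio $B_{n+1}/B_n$ of consecutive Bell numbers. Since the uniform measure on $\Pi_n$, conditioned on having exactly $k$ blocks, is uniform on $\Pi_n^k$, we have
\begin{align*}
\E(X_n)=\frac1{B_n}\sum_{k=1}^n S_{n,k}\,\E(X_{n,k}),\qquad \E(Y_n)=\frac1{B_n}\sum_{k=1}^n S_{n,k}\,\E(Y_{n,k}).
\end{align*}
First I would substitute the closed forms \eqref{eq:meanblock-crol} and \eqref{eq:meanblock-croc}. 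Each summand is a polynomial in $k$ times either $1$ or a ratio $S_{n-j,k-i}/S_{n,k}$, so after multiplying by $S_{n,k}$ the weights collapse to plain Stirling numbers and the sums become evaluable in closed form. The required identities, all consequences of the Bell-polynomial generating function $\sum_n B_n(x)\,t^n/n!=e^{x(e^t-1)}$ with $B_n(x)=\sum_k S_{n,k}x^k$ (differentiate in $x$ at $x=1$), are
\begin{align*}
\sum_k (k-1)S_{n,k}=B_{n+1}-2B_n,\qquad \sum_k k(k-1)S_{n,k}=B_{n+2}-3B_{n+1}+B_n,
\end{align*}
together with the reindexing relations $\sum_k S_{n-j,k-i}=B_{n-j}$ and $\sum_k k\,S_{n-1,k-1}=B_n$. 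Applying these to \eqref{eq:meanblock-crol} and \eqref{eq:meanblock-croc} turns $\E(X_n)$ and $\E(Y_n)$ into explicit linear combinations of Bell ratios $B_{n\pm j}/B_n$ with polynomial-in-$n$ coefficients.

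Second, I would isolate the dominant term. The crucial point is that the leading part $\tfrac12 n(k-1)$ is common to \emph{both} $\E(X_{n,k})$ and $\E(Y_{n,k})$, and it contributes $\tfrac{n}{2}\,(B_{n+1}-2B_n)/B_n=\tfrac{n}{2}\bigl(B_{n+1}/B_n-2\bigr)$ to each mean. Every other term is a polynomial in $n$ times a Bell ratio $B_{n\pm j}/B_n$ with a nonzero shift. Using the elementary bounds $B_{n+j}/B_n=\Theta\bigl((n/\log n)^j\bigr)$ and $B_{n-j}/B_n=\Theta\bigl((\log n/n)^j\bigr)$, I would check that all of these are $O\bigl(n^2/(\log n)^2\bigr)$: for $\E(X_n)$ the largest is the $B_{n+2}/B_n$ contribution of order $n^2/(\log n)^2$, while for $\E(Y_n)$ the shifted-down ratios are multiplied by at most $\binom{n}{4}$, giving terms of order only $O(n\log n)$. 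Since $n^2/(\log n)^2=o\bigl(n^2\log\log n/(\log n)^2\bigr)$, each is negligible against the announced correction, so both means satisfy $\E(X_n)=\E(Y_n)=\tfrac{n}{2}\,B_{n+1}/B_n+o\bigl(n^2\log\log n/(\log n)^2\bigr)$.

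Third, everything reduces to the two-term asymptotics of $B_{n+1}/B_n$. Here I would invoke the classical saddle-point (Moser--Wyman / de Bruijn) analysis of the Bell numbers: writing $r=r(n)$ for the positive root of $re^r=n$, one has $B_{n+1}/B_n\sim n/r$, and the asymptotic expansion of the Lambert function gives $r=\log n-\log\log n+o(\log\log n)$. Hence
\begin{align*}
\frac{B_{n+1}}{B_n}=\frac{n}{\log n}\left(1+\frac{\log\log n}{\log n}\bigl(1+o(1)\bigr)\right),
\end{align*}
and multiplying by $n/2$ yields precisely $\frac{n^2}{2\log n}\bigl(1+\frac{\log\log n}{\log n}(1+o(1))\bigr)$ for both $\E(X_n)$ and $\E(Y_n)$, as claimed.

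The main obstacle is exactly this last step: pinning down $B_{n+1}/B_n$ to two asymptotic terms, i.e. capturing not only the leading order $n/\log n$ but also the $\log\log n/\log n$ correction. This demands the saddle-point estimate for $B_n$ together with a careful expansion of $W(n)$, controlled uniformly enough that the error in the ratio is genuinely $o(\log\log n/\log n)$. By contrast, the evaluation of the Stirling/Bell moment sums and the order estimates on the subleading terms are routine bookkeeping, and the fact that $X_n$ and $Y_n$ share the same leading term makes the two cases essentially identical once the reduction is in place.
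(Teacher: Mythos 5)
Your proposal is correct. Note that the paper does not actually prove this statement here --- it is quoted from~\cite{KaMean} --- but your route is exactly the one the paper itself deploys in Section~4 for the analogous (and harder) variance computation: the law of total expectation over the number of blocks, the Stirling-to-Bell moment identities of Lemma~\ref{lem:moments-stirling} (your two displayed identities are the $r=1,2$ cases of~\eqref{eq:momentsStirlings-first values}), and the Bell-ratio asymptotics~\eqref{eq:Asymptotic_QuotientBell1}. Carrying out your first step for $X_n$ reproduces precisely the closed form~\eqref{eq:mean-crol}, and your order estimates on the subleading ratios are right ($B_{n+2}/B_n=O(n^2/(\log n)^2)$ is indeed $o(n^2\log\log n/(\log n)^2)$, and the shifted-down ratios in the $Y_n$ case contribute only $O(n\log n)$). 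You also correctly isolate the one genuinely nontrivial input, namely the two-term expansion $B_{n+1}/B_n=\frac{n}{\log n}\bigl(1+\frac{\log\log n}{\log n}(1+o(1))\bigr)$ with an error small enough not to pollute the second-order term; the paper imports this from Salvy--Shackell rather than rederiving it, which is the honest way to close that gap.
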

 In Section~4,  we deduce from the
expression~\eqref{eq:meanblock-crol} for $\E(X_{n,k})$ and the
formula of $\Var(X_{n,k})$ in
Theorem~\ref{thm:ExactMomentsDistcrol_blocs} a useful expression for
$\Var(X_{n})$ from which we will obtain the following fairly good
asymptotic approximation of~$\Var(X_{n})$.
\begin{thm}\label{thm:MomentsDistcrol}
 The variance of $X_{n}$ satisfies, as $n\rightarrow\infty$,
\begin{align}\label{eq:MomentsDistcrol}
\Var(X_n)&=\frac{n^{3}}{3\left(\log n\right)^{2}} \left
(1+2\frac{\log\log n}{\log n} (1+o(1)) \right).
\end{align}
\end{thm}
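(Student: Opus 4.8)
The plan is to obtain $\Var(X_n)$ by conditioning on the number of blocks. Let $K_n$ denote the number of blocks of a uniform random partition of $[n]$, so that $\Pr(K_n=k)=S_{n,k}/B_n$, and observe that conditionally on $\{K_n=k\}$ a uniform partition of $[n]$ is uniform over $\Pi_n^k$; hence the law of $X_n$ given $\{K_n=k\}$ is exactly that of $X_{n,k}$. The law of total variance then gives
\[
\Var(X_n)=\E\big[\Var(X_{n,K_n})\big]+\Var\big(\E(X_{n,K_n})\big),
\]
where $\E(X_{n,k})$ and $\Var(X_{n,k})$ are the exact quantities of Theorem~\ref{thm:exactmean-crolcroc} and Theorem~\ref{thm:ExactMomentsDistcrol_blocs}. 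I would substitute these two formulas and expand everything as an average against the weights $S_{n,k}/B_n$.

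The crucial point is that all the resulting sums evaluate in closed form. Writing $g(k)=S_{n,k-1}/S_{n,k}$ and $h(k)=S_{n,k-2}/S_{n,k}$, the only non-polynomial ingredients of $\E(X_{n,k})$ and $\Var(X_{n,k})$ are $g$, $h$ and $g^2$. Against the weight $S_{n,k}$ the factor $g$ telescopes, $\sum_k P(k)g(k)S_{n,k}=\sum_k P(k)S_{n,k-1}$ for any polynomial $P$, and likewise for $h$; combined with the finite-difference identities (e.g. $\sum_k kS_{n,k}=B_{n+1}-B_n$ and $\sum_k k(k-1)S_{n,k}=B_{n+2}-3B_{n+1}+B_n$), every such sum reduces to a combination of $B_n,B_{n+1},\dots,B_{n+4}$. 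Moreover the genuinely awkward quantity $\sum_k(n+1-k)^2 S_{n,k-1}^2/S_{n,k}$ produced by the $g^2$-term of $\Var(X_{n,k})$ is cancelled exactly by the $g^2$-term arising from $\E(X_{n,K_n})^2$ inside $\Var(\E(X_{n,K_n}))$; this is what makes $\E(X_n^2)$ a clean function of the Bell numbers. Carrying out these reductions yields the announced ``useful expression'': an exact identity writing $\Var(X_n)$ as a polynomial-in-$n$ combination of the ratios $B_{n+j}/B_n$. Organising the outcome, the two leading contributions are $\tfrac{n}{12}\,\E(K_n^2)+\tfrac{n^2}{4}\,\Var(K_n)$, with $\E(K_n^2)=B_{n+2}/B_n-2B_{n+1}/B_n$ and $\Var(K_n)=B_{n+2}/B_n-(B_{n+1}/B_n)^2-1$, while every remaining term is $O(n^2)$ or $O\!\big(n^3/(\log n)^3\big)$ (the exact evaluations such as $\Cov(K_n,g(K_n))=1$ show that the a priori dangerous cross-terms collapse to size $O(n^2)$).

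It then remains to insert the asymptotics of the Bell ratio $r_n:=B_{n+1}/B_n$. Let $\rho_n$ be the positive root of $\rho e^{\rho}=n$, so that $\rho_n=\log n-\log\log n+o(1)$; the classical saddle-point (Moser--Wyman) estimates give $r_n\sim n/\rho_n$ and, after the appropriate second-order analysis, $r_{n+1}-r_n\sim 1/(1+\rho_n)$. Consequently $\E(K_n^2)\sim r_n^2\sim n^2/\rho_n^2$ and $\Var(K_n)=r_n(r_{n+1}-r_n)-1\sim n/\rho_n^2$, so the two leading contributions are $\tfrac{1}{12}\,n^3/\rho_n^2$ and $\tfrac14\,n^3/\rho_n^2$, summing to $\tfrac13\,n^3/\rho_n^2$. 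Finally, expanding $\rho_n^{-2}=(\log n)^{-2}\big(1+2\tfrac{\log\log n}{\log n}+o(\tfrac{\log\log n}{\log n})\big)$ produces exactly
\[
\Var(X_n)=\frac{n^3}{3(\log n)^2}\Big(1+2\frac{\log\log n}{\log n}\,(1+o(1))\Big).
\]

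The routine but lengthy part is the exact reduction of the second paragraph; the delicate part, which I expect to be the main obstacle, is the third: one needs the asymptotics of $B_{n+j}/B_n$ to one order beyond the leading term, and in particular the estimate $r_{n+1}-r_n\sim 1/(1+\rho_n)$, with enough uniformity to guarantee that all the secondary contributions identified above are genuinely $o\!\big(n^3\log\log n/(\log n)^3\big)$ and hence absorbed into the stated error term. Since the target precision isolates only the leading term and the $\tfrac{\log\log n}{\log n}$ correction, this level of control over the Bell ratios suffices.
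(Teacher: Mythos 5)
Your proposal is correct, and it rests on the same two pillars as the paper's proof: an exact reduction of the second moment of $X_n$ to a polynomial-in-$n$ combination of the ratios $B_{n+j}/B_n$ (via the identity $kS_{n,k}=S_{n+1,k}-S_{n,k-1}$, which is Lemma~\ref{lem:moments-stirling}), followed by second-order asymptotics for Bell ratios --- your Moser--Wyman statements $r_n\sim n/\rho_n$ and $r_{n+1}-r_n\sim 1/(1+\rho_n)$ are precisely the approximations \eqref{eq:Asymptotic_QuotientBell1} and \eqref{eq:Asymptotic_Bell_lem2} that the paper imports from Salvy--Shackell. The packaging, however, is genuinely different. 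The paper does not use the law of total variance: it computes the factorial moment $\E\bigl(X_n(X_n-1)\bigr)=\frac{1}{B_n}\sum_k T_{n,k}''(1)$ directly from Proposition~\ref{prop:2nd derivative}, obtains the closed form \eqref{eq:moments2-crol}, and then writes $\Var(X_n)=\E(X_n^2-X_n)+\E(X_n)-\E(X_n)^2$ using \eqref{eq:mean-crol}; since $T_{n,k}''(1)/S_{n,k}$ never involves $\bigl(S_{n,k-1}/S_{n,k}\bigr)^2$, the cancellation you must observe explicitly between the $g^2$-term of $\Var(X_{n,k})$ and the one generated by $\E(X_{n,K_n})^2$ is built in for free. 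What your decomposition buys is conceptual transparency: isolating the leading contributions as $\tfrac{n}{12}\E(K_n^2)+\tfrac{n^2}{4}\Var(K_n)$ explains at a glance why the constant is $\tfrac1{12}+\tfrac14=\tfrac13$ and why the second-order difference $\frac{B_{n+2}}{B_n}-\bigl(\frac{B_{n+1}}{B_n}\bigr)^2=r_n(r_{n+1}-r_n)-\,$const is the decisive asymptotic input, something the paper's rearrangement \eqref{eq:VarX_n_proof} achieves only after the fact. You also correctly flag the two real hazards: the a priori dangerous cross-terms (which collapse because of exact identities such as $\Cov\bigl(K_n,S_{n,K_n-1}/S_{n,K_n}\bigr)=1$) and the need for uniformity in the Bell-ratio expansions at the $\log\log n/\log n$ level; modulo carrying out the admittedly lengthy exact reduction, the argument is complete and equivalent to the paper's.
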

Although the proximity of the parameters $\croc$ and~$\crol$ lead us
to believe that $\Var(Y_n)$ and $\Var(X_n)$ are asymptotically
equivalent, we are currently unable to obtain a satisfactory
approximation of $\Var(Y_n)$. In Section~5.2,  we derive
only the following  bound (which seems to be far from sharp  but is
sufficient for our purpose) by exploiting the closeness of~$\croc$
to~$\crol$.
\begin{thm}\label{thm:MomentsDistcroc}
 The variance of $Y_{n}$ satisfies, as $n\rightarrow\infty$,
\begin{align}\label{eq:MomentsDistcroc}
\Var(Y_n)&=O\left(\frac{n^{4}}{\left(\log n\right)^{4}}\right).
\end{align}
\end{thm}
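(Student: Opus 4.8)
The plan is to exploit the pointwise closeness of $\croc$ and $\crol$ furnished by Lemma~\ref{lem:croc-crol}, together with the asymptotics of $\Var(X_n)$ from Theorem~\ref{thm:MomentsDistcrol}. Set $D_n:=\croc-\crol\ge 0$, regarded as a random variable on $\Pi_n$, so that $Y_n=X_n+D_n$. The key structural input is that the discrepancy $D_n$ is small in a scale governed by the number of blocks: Lemma~\ref{lem:croc-crol} gives a pointwise bound of the form $0\le \croc(\pi)-\crol(\pi)\le C\,\bl(\pi)^2$ for an absolute constant $C$, where $\bl(\pi)$ is the number of blocks of $\pi$ (the extra crossings all involve the ``wrap\-around'' chords, of which there are at most $\bl(\pi)$, and each meets only $O(\bl(\pi))$ further chords). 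Since $Y_n=X_n+D_n$, Minkowski's inequality in $L^2(\Pi_n)$ yields
\begin{equation*}
\sqrt{\Var(Y_n)}\;\le\;\sqrt{\Var(X_n)}+\sqrt{\Var(D_n)}\;\le\;\sqrt{\Var(X_n)}+\sqrt{\E\!\left(D_n^2\right)},
\end{equation*}
so it suffices to bound the two terms on the right.

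The first term is immediate from Theorem~\ref{thm:MomentsDistcrol}: it gives $\Var(X_n)=O\!\left(n^3/(\log n)^2\right)$, hence $\sqrt{\Var(X_n)}=O\!\left(n^{3/2}/\log n\right)$, which is negligible against the target order $n^2/(\log n)^2$. For the second term, let $K_n$ denote the random number of blocks $\bl$ of a uniformly random partition of $[n]$. The pointwise estimate $D_n\le C\,K_n^2$ gives
\begin{equation*}
\E\!\left(D_n^2\right)\;\le\;C^2\,\E\!\left(K_n^4\right),
\end{equation*}
so the whole problem collapses to the fourth-moment estimate $\E\!\left(K_n^4\right)=O\!\left((n/\log n)^4\right)$.

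To establish this I would pass to the factorial moments of $K_n$, which have a transparent generating function: differentiating the Touchard identity $\sum_{n\ge0}\bigl(\sum_k S_{n,k}x^k\bigr)t^n/n!=e^{x(e^t-1)}$ in $x$ and setting $x=1$ gives $\sum_{n\ge0}\bigl(\sum_k (k)_r S_{n,k}\bigr)t^n/n!=(e^t-1)^r e^{e^t-1}$, whence $\E[(K_n)_r]=n!\,[t^n]\bigl((e^t-1)^r e^{e^t-1}\bigr)/B_n$. A routine saddle-point (or ratio-of-coefficients) analysis, as in the study of the block statistics of random set partitions (see~\cite[Chapter~4]{Sac}), shows $\E[(K_n)_r]=O\!\left((n/\log n)^r\right)$ for each fixed $r$. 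Converting falling factorials back to ordinary powers through the Stirling numbers then gives $\E\!\left(K_n^4\right)=O\!\left((n/\log n)^4\right)$. Substituting into the displays above yields $\sqrt{\Var(Y_n)}=O\!\left(n^2/(\log n)^2\right)$, that is, $\Var(Y_n)=O\!\left(n^4/(\log n)^4\right)$, as claimed.

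The main obstacle is the moment estimate $\E\!\left(K_n^4\right)=O\!\left((n/\log n)^4\right)$: everything else is a mechanical assembly of Lemma~\ref{lem:croc-crol} and Theorem~\ref{thm:MomentsDistcrol}, but this step requires quantifying the concentration of $K_n$ about its mean $\sim n/\log n$ precisely enough to control the fourth moment. It is also worth recording why the resulting bound is far from sharp: in passing from $\Var(D_n)$ to the uncentred second moment $\E(D_n^2)$ and thence to the worst-case inequality $D_n\le C\,K_n^2$, I discard all cancellation. Since $X_n$ (and, conjecturally, $Y_n$) has variance of order $n^3/(\log n)^2$, the bound $O\!\left(n^4/(\log n)^4\right)$ overshoots by a factor of order $n/(\log n)^2$; nonetheless it is amply sufficient to deduce, via Chebyshev's inequality, that $Y_n$ is concentrated around its mean $\E(Y_n)\sim n^2/(2\log n)$.
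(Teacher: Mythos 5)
Your proposal is correct and follows essentially the same route as the paper: both reduce the problem, via a Minkowski/Cauchy--Schwarz step and Theorem~\ref{thm:MomentsDistcrol}, to bounding the second moment of $Y_n-X_n$ by the fourth moment of the number of blocks using Lemma~\ref{lem:croc-crol}, and in both arguments the dominant contribution is $\E(K_n^4)=B_n^{(4)}/B_n=O\bigl((n/\log n)^4\bigr)$ (the paper detours through the exact mean formulas of Theorem~\ref{thm:exactmean-crolcroc}, which does not change the final order). The one step you leave to a ``routine saddle-point analysis'' is supplied explicitly in the paper by Lemma~\ref{lem:moments-stirling} together with the Bell-ratio asymptotics~\eqref{eq:Asymptotic_QuotientBell1}, so no genuine gap remains.
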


Combining Theorem~\ref{thm:mean-crolcroc1} with
equations~\eqref{eq:MomentsDistcrol}--\eqref{eq:MomentsDistcroc}, we
see  that\linebreak ${\sqrt{\Var(X_n)}/\E(X_n)\to 0}$ and
$\sqrt{\Var(Y_n)}/\E(Y_n)\to 0$ as $n\to\infty$.  This leads
immediately (by Chebyshev's inequality) to the following result.
\begin{cor}
 As $n\to\infty$, the
distributions of $X_n$ and $Y_n$ are concentrated around their mean,
i.e., $\frac{X_n}{\E(X_n)}\overset{p}{\longrightarrow} 1$ and
$\frac{Y_n}{\E(Y_n)}\overset{p}{\longrightarrow} 1$.
\end{cor}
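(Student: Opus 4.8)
The plan is to deduce the statement from Chebyshev's inequality, so that the entire argument reduces to checking that the coefficient of variation $\sqrt{\Var(\cdot)}/\E(\cdot)$ tends to $0$ for each of $X_n$ and $Y_n$; all the genuine difficulty has already been absorbed into Theorems~\ref{thm:mean-crolcroc1},~\ref{thm:MomentsDistcrol} and~\ref{thm:MomentsDistcroc}. First I would record, from Theorem~\ref{thm:mean-crolcroc1}, that both means grow like $\frac{n^2}{2\log n}(1+o(1))$; in particular $\E(X_n)$ and $\E(Y_n)$ are strictly positive for all large $n$, so dividing by them is legitimate.

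Next I would estimate the two coefficients of variation. For $X_n$, Theorem~\ref{thm:MomentsDistcrol} gives $\sqrt{\Var(X_n)}=\frac{n^{3/2}}{\sqrt3\,\log n}(1+o(1))$, and upon dividing by $\E(X_n)$ the logarithmic factors cancel, leaving
\begin{align*}
\frac{\sqrt{\Var(X_n)}}{\E(X_n)}=\frac{2}{\sqrt3}\,n^{-1/2}(1+o(1))\longrightarrow 0.
\end{align*}
For $Y_n$ only the cruder bound $\Var(Y_n)=O\left(n^4/(\log n)^4\right)$ of Theorem~\ref{thm:MomentsDistcroc} is available, so $\sqrt{\Var(Y_n)}=O\left(n^2/(\log n)^2\right)$; dividing by $\E(Y_n)\sim\frac{n^2}{2\log n}$ leaves one power of $\log n$ surviving in the denominator, giving $\sqrt{\Var(Y_n)}/\E(Y_n)=O(1/\log n)\to 0$. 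It is worth noting that, although this variance bound is far from sharp, it is comfortably strong enough here: any estimate of the form $\Var(Y_n)=o\left(n^4/(\log n)^2\right)$ would already force the coefficient of variation to $0$.

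Finally I would apply Chebyshev's inequality directly. Fixing $\varepsilon>0$ and using $\E(X_n)>0$ for large $n$,
\begin{align*}
Pr\left(\left|\frac{X_n}{\E(X_n)}-1\right|\ge\varepsilon\right)
&=Pr\bigl(|X_n-\E(X_n)|\ge\varepsilon\,\E(X_n)\bigr)\\
&\le\frac{1}{\varepsilon^2}\left(\frac{\sqrt{\Var(X_n)}}{\E(X_n)}\right)^{2},
\end{align*}
and the right-hand side tends to $0$ by the estimate above, so $X_n/\E(X_n)\overset{p}{\longrightarrow}1$; the identical computation with the $Y_n$-estimate yields $Y_n/\E(Y_n)\overset{p}{\longrightarrow}1$. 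I do not expect any obstacle at this last stage: once the mean and variance asymptotics are in hand the concentration is a one-line second-moment argument, and the only real work---establishing those asymptotics---has already been carried out in the theorems cited above.
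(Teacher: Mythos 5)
Your proposal is correct and follows exactly the paper's argument: combine the mean asymptotics of Theorem~\ref{thm:mean-crolcroc1} with the variance estimates~\eqref{eq:MomentsDistcrol} and~\eqref{eq:MomentsDistcroc} to see that both coefficients of variation tend to $0$, then conclude by Chebyshev's inequality. The explicit rates you record ($n^{-1/2}$ for $X_n$ and $1/\log n$ for $Y_n$) are consistent with the cited theorems, and no further justification is needed.
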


 Last but not least, in Section~6, we find maximum values of the
parameters $\crol$ and~$\croc$ on $\Pi_n$ and $\Pi_n^k$. This answers a question 
of P. Nadeau~\cite{Nadeau}  which is important for the
comprehension of the distribution of the parameters $\crol$ and
$\croc$. The answer is, in full generality, far from obvious.
Note that it is easy to see that for each $n\geq k\geq 1$, there is a partition $\pi$ in $\Pi_{n,k}$ 
such that $\crol(\pi)=\croc(\pi)=0$ (there are in fact $N(n,k)$ such partitions where $N(n,k)$
 is a Narayana number).
 
\begin{thm}\label{thm:max-cro-bl}
 Let $M^{(\ell)}_{n,k}$ (resp., $M^{(c)}_{n,k}$) denote the maximum value
of~$\crol(\pi)$ (resp., $\croc(\pi)$) taken over all
$\pi\in\Pi_n^k$. Then,  we have
\begin{enumerate}
 \item \begin{enumerate}
        \item if $1\leq k\leq\left\lfloor \tfrac{n}{2}\right\rfloor $,
$M^{(\ell)}_{n,k}=(k-1)n -3 \binom{k}{2}$,
         \item if $\left\lceil \tfrac{n}{2}\right\rceil \leq  k\leq n$,
$M^{(\ell)}_{n,k}=\binom{n-k}{2}$;
       \end{enumerate}
\item \begin{enumerate}
        \item if $k\leq \tfrac{n}{3}$,
$M^{(c)}_{n,k}=2\binom{k}{2}\left\lfloor
\tfrac{n}{k}\right\rfloor+2\binom{r}{2}=(k-1)n -r(k-r)$, where $r$ is the remainder in the division of $n$ by $k$,
         \item if $\tfrac{n}{3}\leq k<\tfrac{n}{2}$ or $\tfrac{n}{2}\leq k\leq
n-6$, $M^{(c)}_{n,k}=6\binom{\lfloor
\tfrac{a}{2}\rfloor}{2}+2\lfloor \tfrac{a}{2}\rfloor \chi\left(
a\equiv 1\pmod{2}\right)$
where we set $a=n-k$,
 \item if $k\geq n-5$ and $k\geq \tfrac{n}{2}$,
$M^{(c)}_{n,k}=\binom{n-k}{2}$.
       \end{enumerate}
\end{enumerate}
\end{thm}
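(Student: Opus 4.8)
The plan is to treat the two parameters in parallel, exploiting in both cases the basic reduction that a crossing can only occur between arcs (resp.\ chords) belonging to \emph{different} blocks. In the linear representation each block is a path whose arcs tile the interval between its smallest and largest element, so two arcs of the same block are never interleaved; in the circular representation the points lie in convex position, hence the only simple polygon through the points of a block joins them in cyclic order, and again two chords of the same block never cross. Thus $\crol$ and $\croc$ are sums, over unordered pairs of distinct blocks, of the number of interleaved arc/chord pairs, and the whole problem becomes: distribute $n$ linearly (resp.\ cyclically) ordered points among $k$ blocks so as to maximize the total inter-block interleaving.

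For the \emph{lower bounds} I would exhibit explicit extremal partitions and evaluate the parameter on them. For cases (1a) and (2a) the natural candidate is the balanced partition into residue classes, block $i=\{j:j\equiv i \pmod{k}\}$; a direct count shows that in the linear representation a crossing occurs for each pair of arc left-endpoints at distance $<k$, giving $\sum_{d=1}^{k-1}(n-k-d)=(k-1)(n-k)-\binom{k}{2}=(k-1)n-3\binom{k}{2}$, while the analogous circular count, in which the wrap-around chords contribute the extra terms, yields $(k-1)n-r(k-r)$. For the large-$k$ regimes (1b) and (2c) one instead places $n-k$ pairwise-crossing arcs/chords (e.g.\ joining $i$ to $m+i$ with $m=n-k$) together with $2k-n$ singletons, which uses exactly $k$ blocks and realizes $\binom{n-k}{2}$. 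The genuinely new construction is for the circular middle range (2b): there one packs $t=\lfloor a/2\rfloor$ equally interleaved triangles ($a=n-k$), padded with singletons and, when $a$ is odd, one extra chord; since two interleaved triangles produce exactly $6$ crossings and the extra chord meets each triangle twice, this gives $6\binom{t}{2}+2t\,\chi(a\ \mathrm{odd})$, matching the stated value. One then checks that these constructions agree at the thresholds $k=n/3$ and $k=n/2$, so the regimes glue together.

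For the \emph{upper bounds} I would argue pairwise. Writing $h_g$ for the number of blocks active over gap $g$ (so $h_g\le\min(g,n-g,k)$, since a block active at $g$ needs a point on each side), the key inequality is that the number of crossings between two blocks $A,B$ is at most the number of gaps where both are active; summing over pairs gives $\crol\le\sum_g\binom{h_g}{2}$, and similarly for $\croc$ after cutting the circle. Combined with the trivial bound $\binom{n-k}{2}$ coming from the arc/chord count, this already pins down the large-$k$ regimes. The deficit between $\sum_g\binom{h_g}{2}$ and the true maximum is caused by long arcs, whose crossing partners overlap in more than one gap, and by nestings; to extract the exact constants $3\binom{k}{2}$ and the triangle-packing values I would promote the pairwise estimate into an exchange argument, showing that an optimal partition may be assumed to have no singletons when $k\le n/2$ (resp.\ to have the prescribed triangle-plus-singleton shape in the middle circular range) and balanced block sizes, after which the count collapses to the already-evaluated extremal configuration.

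The hard part will be the upper bounds, and specifically converting the soft pairwise/height estimate into the \emph{exact} maximum. The height bound $\sum_g\binom{h_g}{2}$ genuinely overcounts---already for $n=6,\,k=3$ it gives $5$ rather than the correct value $3$---so the crux is a rigidity/exchange argument certifying that the balanced residue partition (resp.\ the interleaved-triangle partition) cannot be beaten, together with a careful case analysis at the boundaries $k\approx n/3,\ n/2,\ n-5$. I expect the circular case to be the most delicate: the wrap-around chords, the discrete optimization of how many blocks to make triangles versus singletons, and the parity correction $2\lfloor a/2\rfloor\,\chi(a\equiv1\bmod 2)$ must all be tracked exactly, and ruling out competing configurations (larger polygons, unequal block sizes) is where most of the effort will lie.
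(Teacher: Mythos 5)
Your reduction to inter-block crossings is the right starting point, and your extremal constructions (balanced residue classes, $n-k$ mutually crossing arcs plus singletons, interleaved triangles) do match the optimizers and give the correct lower bounds. But the upper bound --- which is the entire content of the theorem --- is not actually proved. The gap bound $\crol(\pi)\leq\sum_g\binom{h_g}{2}$ is, as you yourself note, strictly weaker than the claimed maximum already for $n=6$, $k=3$, and the ``rigidity/exchange argument'' that is supposed to close the deficit is only announced, never carried out. Ruling out competing configurations (unequal block sizes, larger polygons, singletons in the wrong regime, the boundary cases near $k\approx n/3$, $n/2$, $n-5$) is precisely where all the difficulty lies, so the proposal stops at the point where the proof would have to begin.

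For comparison, the paper makes the problem tractable by splitting it into two clean steps. First, writing $\crou(\pi)=\sum_{i<j}\crou(\st(B_i/B_j))$, it shows that for a fixed block-size vector $\la$ the maximum equals $\sum_{i<j}M^{(u)}(\la_i,\la_j)$: the inequality $\leq$ is immediate from the pairwise decomposition, and equality is witnessed by a single partition $\pi(\la)$ (the column-filling of the Ferrers diagram of $\la$) whose every pair of blocks simultaneously realizes the two-block maximum $M^{(u)}(\la_i,\la_j)$, these two-block maxima being computed explicitly. This converts the set-partition optimization into a discrete optimization of an explicit function of an \emph{integer} partition of $n$ into $k$ parts. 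Second, that optimization is solved by a concrete exchange lemma: if $\la$ has two parts $u<v$ with $v-u\geq 2$, then balancing them strictly increases $M^{(\ell)}$ (and $M^{(c)}$ except when $(u,v)=(1,3)$, which is what produces the triangle-plus-singleton optimizers in the middle range). If you want to complete your argument, you should prove (a) that the pairwise upper bound is attained simultaneously by one partition, and (b) an exchange inequality at the level of block sizes with the $(1,3)$ exception tracked for $\croc$; without (a) your pairwise bound does not even reduce the problem to block sizes, and without (b) the ``balanced is best'' claim is unsupported.
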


The proof of the above result is far from trivial and relies on a tedious discrete optimization. 
It is also worth noting that the first part of Theorem~\ref{thm:max-cro-bl} could be also derived 
by computing the degree of~$T_{n,k}(q)$ in $q$ thanks to the expressions~\eqref{eq:distcr} or~\eqref{eq:JosRub}, but this method
is inefficient for determining $M^{(c)}_{n,k}$. \newline
 \indent 
In conjunction with the above result, it is not difficult to determine the global maximas of the functions $k\mapsto M^{(\ell)}_{n,k}$ 
and $k\mapsto M^{(c)}_{n,k}$ defined on $[n]$. This yields the following result.

\begin{thm}\label{thm:max-cro}
 Let $M^{(\ell)}_{n}$ (resp., $M^{(c)}_{n}$) denote the maximum value
of~$\crol(\pi)$ (resp., $\croc(\pi)$) taken over all~${\pi\in
\Pi_n}$. Then, we have
\begin{enumerate}
 \item if $n\geq 1$, $ M^{(\ell)}_{n}=\left\lfloor
\tfrac{1}{3}{n-1\choose 2}\right\rfloor $,
\item \begin{enumerate}
        \item if $n\geq 5$ and $n\equiv 0\pmod{3}$, $M^{(c)}_{n}=\left\lfloor
\tfrac{2}{3}{n-1\choose 2}\right\rfloor$,
         \item if $n\geq 5$ and $n\equiv 1,2\pmod{3}$,
$M^{(c)}_{n}=\left\lfloor \tfrac{2}{3}{n-2\choose 2}\right\rfloor$.
       \end{enumerate}
\end{enumerate}
\end{thm}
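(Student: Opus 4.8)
The plan is to derive Theorem~\ref{thm:max-cro} from the per-block formulas of Theorem~\ref{thm:max-cro-bl} by a direct discrete optimization of the piecewise functions $k\mapsto M^{(\ell)}_{n,k}$ and $k\mapsto M^{(c)}_{n,k}$ over $1\le k\le n$. For the linear case I would first rewrite the value on $1\le k\le\lfloor n/2\rfloor$ as $M^{(\ell)}_{n,k}=(k-1)n-3\binom{k}{2}=(k-1)(n-\tfrac32 k)$, a downward parabola in $k$ with real maximizer $k^\ast=\tfrac{n}{3}+\tfrac12$. On the complementary range $\lceil n/2\rceil\le k\le n$ the value $\binom{n-k}{2}$ is decreasing in $k$, so its maximum $\binom{\lfloor n/2\rfloor}{2}\sim n^2/8$ is dominated by the parabola's maximum $\sim n^2/6$; hence the global maximum lies on the first range. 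It then remains to evaluate the parabola at the integer nearest $k^\ast$: since $k^\ast=\tfrac{n}{3}+\tfrac12$, the residue $n\bmod 3$ dictates the rounding, the maximizing integer being $m$ or $m+1$ (a tie) when $n=3m$ and $m+1$ when $n=3m+1$ or $n=3m+2$, and in all three classes a short computation gives the value $\lfloor\tfrac13\binom{n-1}{2}\rfloor$. The finitely many small $n$ for which $k^\ast$ could fall outside $[1,\lfloor n/2\rfloor]$ are checked by hand.

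For the circular case the same ``maximizer near $k=n/3$'' phenomenon drives the argument, but the bookkeeping is heavier. On regime (a) of part (2) ($k\le n/3$) I would use the identity $M^{(c)}_{n,k}=k(k-1)\lfloor n/k\rfloor+r(r-1)$, where $n=qk+r$ with $q=\lfloor n/k\rfloor$ and $0\le r<k$. The key observation is that within each band $\{k:\lfloor n/k\rfloor=q\}$ one has $M^{(c)}_{n,k}=q(q+1)k^2-2qnk+(n^2-n)$, an upward parabola whose vertex $k=\tfrac{n}{q+1}$ is \emph{exactly} the left endpoint of the band; hence the value increases across the whole band. Combined with the bound $M^{(c)}_{n,k}\le(k-1)n$ (tight iff $k\mid n$) to control the band transitions, this shows $M^{(c)}_{n,k}$ is nondecreasing throughout regime (a), so its maximum there is attained at $k=\lfloor n/3\rfloor$.

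On regime (b) (the two subranges share the formula) the value $6\binom{\lfloor a/2\rfloor}{2}+2\lfloor a/2\rfloor\,\chi(a\text{ odd})$ with $a=n-k$ is increasing in $a$, hence decreasing in $k$, so its maximum occurs at the smallest admissible value of $k$, namely $k=\lfloor n/3\rfloor+1$; and on regime (c) the value $\binom{n-k}{2}$ is $O(1)$ and negligible. Thus the global maximum reduces to the larger of the regime-(a) value at $k=\lfloor n/3\rfloor$ and the regime-(b) value at $k=\lfloor n/3\rfloor+1$.

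The main obstacle is precisely this boundary comparison, whose winner shifts with $n\bmod 3$. Writing $n=3m+\rho$ and evaluating, the regime-(a) value at $k=m$ equals $3m(m-1)$, $3m(m-1)$, $3m(m-1)+2$ for $\rho=0,1,2$, while the regime-(b) value at the first admissible $k$ equals $3m(m-1)$, $3m(m-1)$, $3m^2-m$. For $\rho=0$ the two regimes meet at $k=m$ and agree; for $\rho=1$ they tie; and for $\rho=2$ the parity correction $\chi(a\text{ odd})$ (with $a=2m+1$) makes regime (b) win by $2m-2$. In each case the maximum simplifies to $\lfloor\tfrac23\binom{n-1}{2}\rfloor$ when $3\mid n$ and to $\lfloor\tfrac23\binom{n-2}{2}\rfloor$ otherwise. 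The delicate steps are therefore establishing the regime-(a) monotonicity across the jumps of $\lfloor n/k\rfloor$ and carefully tracking the corrections $r(r-1)$ and $\chi(a\text{ odd})$ in the comparison; the small $n\ge5$ lying near the regime boundaries are verified directly.
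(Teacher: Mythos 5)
Your proposal is correct and follows the same skeleton as the paper's proof: both derive the theorem from Theorem~\ref{thm:max-cro-bl} by optimizing the piecewise function $k\mapsto M^{(u)}_{n,k}$, locating the maximizer near $k=n/3$, and settling the result by the boundary comparison between $k=\lfloor n/3\rfloor$ (regime (2a)) and the first admissible $k$ of regime (2b); your residue-by-residue values $3m(m-1)$, $3m(m-1)$, $3m^2-m$ and the final simplifications agree with the paper's. The one substantive difference is how the growth of $M^{(c)}_{n,k}$ on $k\le n/3$ is established: the paper proves strict monotonicity of $g_n(k)=(k-1)n-r_k(k-r_k)$ directly (Lemma~\ref{lem:croissance}) by computing $g_n(k+1)-g_n(k)$ and splitting on whether $r_{k+1}=r_k-q_{k+1}$ or $r_{k+1}=k+r_k-q_{k+1}$, whereas you decompose into bands of constant $q=\lfloor n/k\rfloor$ and use the position of each band's parabola vertex. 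Your within-band argument is sound, but the claim that the bound $M^{(c)}_{n,k}\le(k-1)n$ ``controls the band transitions'' and yields full monotonicity is not justified as stated: at a transition one needs $r_{k+1}(k+1-r_{k+1})\le n$, which is true but requires essentially the same remainder bookkeeping as the paper's lemma (note also that $\lfloor n/k\rfloor$ can drop by more than $1$, so consecutive $k$ need not lie in adjacent bands). Fortunately full monotonicity is more than the theorem needs: for $k\le\lfloor n/3\rfloor-1$ your bound gives $M^{(c)}_{n,k}\le(k-1)n\le(\lfloor n/3\rfloor-2)n$, while $M^{(c)}_{n,\lfloor n/3\rfloor}=(\lfloor n/3\rfloor-1)n-\rho(\lfloor n/3\rfloor-\rho)>(\lfloor n/3\rfloor-2)n$ with $\rho=n\bmod 3\le 2$, so the regime-(2a) maximum is at $k=\lfloor n/3\rfloor$ and your toolkit does close the argument without the monotonicity claim.
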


\section{The variance of $X_{n,k}$: proof of Theorem~\ref{thm:ExactMomentsDistcrol_blocs}}

 Our proof of Theorem~\ref{thm:ExactMomentsDistcrol_blocs} essentially relies on the
generating function expansion~\eqref{eq:gf_crol}. Recall that a
useful property of the probability generating function $G(q)$ of a
non-negative integer valued random variable~$Z$ is that its $m$-th
derivative at $q=1$ gives the $m$-th factorial moment of $Z$. By
definition, the probability generating function of~$X_{n,k}$ is
$p_{n,k}(q)=T_{n,k}(q)/S_{n,k}$, where $T_{n,k}(q)$ is defined
in~\eqref{eq:def Tnk}. Consequently, we have
$\E\big(X_{n,k}(X_{n,k}-1)\big)=T_{n,k}^{\prime\prime}(1)/S_{n,k}$,
whence
\begin{align}\label{eq:Var from gf}
\Var(X_{n,k})=\frac{T_{n,k}^{\prime\prime}(1)}{S_{n,k}}+\E(X_{n,k})-\E(X_{n,k})^2.
\end{align}
To find a ``convenient'' expression for $\Var(X_{n,k})$, we just
have to find a formula for $T_{n,k}^{\prime\prime}(1)$
since~\eqref{eq:meanblock-crol} already provides a formula
for~$\E(X_{n,k})$. We will ``extract'' a formula
for~$T_{n,k}^{\prime\prime}(1)$ from~\eqref{eq:gf_crol} by a routine
but unpleasant computation.
 \begin{prop}\label{prop:2nd derivative} For all integers $n\geq k\geq 1$, we have
 \begin{align*}
\frac{T_{n,k}''(1)}{S_{n,k}}&=\frac{(k-1)^2}{4}n^2  -\frac{k-1}{12}(15 k^2-16 k+5) n+\frac{k (k-1)}{144}(225 k^2-229 k\\
&\;+170)+\frac{1}{12}\big( (18 k -33)n^2 -(63 k^2-137 k+79) n +(k-1)(45 k^2-73 k\\
&\;+20)\big) \frac{S_{n,k-1}}{S_{n,k}}+\frac{1}{12} \big(
27n^2-(54k-55)n  +(k-2)(27 k-1)  \big)  \frac{S_{n,k-2}}{S_{n,k}}.
\end{align*}
\end{prop}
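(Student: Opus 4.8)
The plan is to extract $T_{n,k}''(1)$ directly from the expansion~\eqref{eq:gf_crol} by differentiating in $q$ term by term. Write $F(a,t,q)=\sum_{k\ge 0}P_k(q)$ with $P_k(q)=(aqt)^k/\prod_{i=1}^k D_i(q)$ and $D_i(q)=q^i-q^i[i]_q\,t+a(1-q)[i]_q\,t$, so that, at the level of formal power series (each coefficient of $a^kt^n$ receiving only finitely many contributions),
\[
\sum_{n,k}T_{n,k}''(1)\,a^k t^n=\partial_q^2 F\big|_{q=1}=\sum_{k\ge 0}P_k''(1).
\]
Everything simplifies at $q=1$: since $(1-q)[i]_q=1-q^i$ and $q^i[i]_q=\sum_{j=i}^{2i-1}q^j$, one gets $D_i(1)=1-it$, while the $a$-carrying term survives only through its $q$-derivatives. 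I would first record $D_i(1)=1-it$, $D_i'(1)$ and $D_i''(1)$ as explicit polynomials in $i$ with coefficients affine in $a$ and polynomial in $t$, using $[i]_q'(1)=\binom{i}{2}$ and $[i]_q''(1)=\tfrac13 i(i-1)(i-2)$.

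I would then compute $P_k''(1)$ by logarithmic differentiation: with $L_k=\log P_k$ one has $P_k''=P_k\big(L_k''+(L_k')^2\big)$, and at $q=1$
\begin{align*}
L_k'(1)&=k-\sum_{i=1}^k\frac{D_i'(1)}{1-it},\\
L_k''(1)&=-k-\sum_{i=1}^k\frac{D_i''(1)}{1-it}+\sum_{i=1}^k\frac{D_i'(1)^2}{(1-it)^2},
\end{align*}
so that $P_k''(1)=P_k(1)\big(L_k''(1)+L_k'(1)^2\big)$ with $P_k(1)=a^k t^k/\prod_{i=1}^k(1-it)$. Because $D_i'(1)$ and $D_i''(1)$ are affine in $a$, the bracket is a quadratic polynomial in $a$, say $\alpha_k(t)+a\,\beta_k(t)+a^2\gamma_k(t)$. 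Hence $P_k''(1)$ feeds the powers $a^k,a^{k+1},a^{k+2}$, and the coefficient of $a^k$ in $\sum_k P_k''(1)$ collects the $\alpha_k$-part of $P_k(1)$, the $\beta_{k-1}$-part of $P_{k-1}(1)$ and the $\gamma_{k-2}$-part of $P_{k-2}(1)$. This is exactly the mechanism producing the three-term shape of the claimed formula.

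The computational engine is to reduce each sum over $i$, after multiplication by the Stirling generating function $\Phi_k(t):=t^k/\prod_{i=1}^k(1-it)=\sum_n S_{n,k}t^n$, to closed combinations of $\Phi_k$ and its $t$-derivatives. Starting from $\Phi_k'/\Phi_k=\frac{k}{t}+\sum_{i=1}^k\frac{i}{1-it}$ and $\frac{1}{1-it}=1+\frac{it}{1-it}$, one obtains the clean relations
\[
\sum_{i=1}^k\frac{i}{1-it}\,\Phi_k=\Phi_k'-\frac{k}{t}\Phi_k,\qquad \sum_{i=1}^k\frac{1}{1-it}\,\Phi_k=t\,\Phi_k'.
\]
Writing $\frac{i}{1-it}$ and $\frac{i^2}{1-it}$ in terms of $\frac{1}{1-it}$ and constants then reduces every simple-pole sum to $\Phi_k,\Phi_k'$ and Faulhaber sums $\sum_i i^m$. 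Extracting $[t^n]$ turns $\Phi_k,\Phi_k',\Phi_k''$ into $S_{n,k},S_{n+1,k},S_{n+2,k}$, which I would collapse back to fixed $n$ via $S_{n+1,k}=kS_{n,k}+S_{n,k-1}$ and its iterate $S_{n+2,k}=k^2S_{n,k}+(2k-1)S_{n,k-1}+S_{n,k-2}$, so that everything is expressed through $S_{n,k},S_{n,k-1},S_{n,k-2}$.

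The main obstacle is the squared term $(L_k'(1))^2$, which produces the double sum $\sum_{i,j}\frac{D_i'(1)D_j'(1)}{(1-it)(1-jt)}$, and the term $\sum_i\frac{D_i'(1)^2}{(1-it)^2}$ in $L_k''(1)$ with its double poles; neither collapses under the elementary identities above. My plan is to resolve them by differentiating the relation $\sum_i\frac{1}{1-it}\Phi_k=t\Phi_k'$ (and its companion for $\sum_i\frac{i}{1-it}\Phi_k$) in $t$, which expresses $\sum_i\frac{1}{(1-it)^2}\Phi_k$ and $\sum_i\frac{i}{(1-it)^2}\Phi_k$, and then $\big(\sum_i\frac{1}{1-it}\big)^2\Phi_k$, as combinations of $\Phi_k,\Phi_k',\Phi_k''$; intuitively, the order-$n^2$ growth of the answer is precisely what these second derivatives supply. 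Once every sum is written through $S_{n,k},S_{n,k-1},S_{n,k-2}$, the remaining work is the lengthy but routine polynomial bookkeeping to match coefficients of $a^kt^n$ and, dividing by $S_{n,k}=[a^kt^n]F(a,t,1)$, to arrive at the stated expression for $T_{n,k}''(1)/S_{n,k}$.
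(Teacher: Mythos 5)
Your proposal follows essentially the same route as the paper's proof: differentiate the expansion~\eqref{eq:gf_crol} twice at $q=1$ via logarithmic (Leibniz) differentiation of the product, reduce the resulting sums over $i$ by partial fractions to $U_k=\sum_i(1-it)^{-1}$ and $V_k=\sum_i(1-it)^{-2}$, express these through $t$-derivatives of $\Phi_k(t)=t^k/\prod_{i=1}^k(1-it)=\sum_n S_{n,k}t^n$, and collapse the shifted Stirling numbers with the standard recurrences. The only point to watch is that $U_k^2\Phi_k$ and $V_k\Phi_k$ are not \emph{individually} linear in $\Phi_k,\Phi_k',\Phi_k''$ --- only the combination $(U_k^2+V_k)\Phi_k=2t\Phi_k'+t^2\Phi_k''$ is --- but this is precisely the combination that the second logarithmic derivative produces (the coefficient of $V_k$ is the square of the residue attached to $U_k$), so your plan goes through exactly as in the paper.
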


 Inserting Proposition~\ref{prop:2nd derivative} and~\eqref{eq:meanblock-crol}
in \eqref{eq:Var from gf} gives Theorem~\ref{thm:ExactMomentsDistcrol_blocs}. 
So, to complete the proof of Theorem~\ref{thm:ExactMomentsDistcrol_blocs}, 
it suffices to verify Proposition~\ref{prop:2nd derivative}.

\emph{Proof of Proposition~\ref{prop:2nd derivative}}
By~\eqref{eq:gf_crol}, the ordinary generating function  of the
$T_{n,k}^{\prime\prime}(1)$'s satisfies
\begin{align}\label{eq:doublegenfunc-crol}
\sum_{n\geq k\geq0}T_{n,k}^{\prime\prime}(1)\,a^kt^n
=\sum_{k\geq0}(at)^kF_k^{\prime\prime}(1),
\end{align}
where $F_k(q)= {\prod_{i=1}^kf_i(q)}$ with
\begin{align} \label{eq:definition_fi}
f_i(q)=f_i(q;a,t)=\frac{q}{q^i-q^i[i]_qt+a(1-q)[i]_qt}.
\end{align}
Using Leibnitz's rule for the derivative of a product, we get
\begin{align}
F_k^{\prime\prime}(1)&= F_k(1)\left(
 \left(\sum_{i=1}^k \frac{f_i'(1)}{f_i(1)}\right)^2
 + \sum_{i=1}^k \left(\frac{f_i^{\prime\prime}(1)}{f_i(1)}-\left(\frac{f_i^{\prime}(1)}{f_i(1)}\right)^2\right)
 \right).\label{eq:derive2nde}
\end{align}
Moreover, using expression \eqref{eq:definition_fi} for $f_i(q)$, after a
routine computation followed by partial fraction decompositions, we
arrive at
\begin{align}
&\frac{f_i'(1)}{f_i(1)}=-\frac{3 i}{2}-\frac{1-3 t+2 a t}{2
t}+\frac{-1+t-2 a t}{2 t (-1+i t)},\label{eq:f'surf}\\
&\frac{f_i^{\prime\prime}(1)}{f_i(1)}-\left(\frac{f_i^{\prime}(1)}{f_i(1)}\right)^2\label{eq:f"surf}\\
&\quad=-\frac{i^2}{12}+\frac{i (1+9 t+12 a t)}{6 t}+\frac{5+36 a t-17 t^2+12 a^2 t^2}{12 t^2}\nonumber\\
&\quad\quad+\frac{4-3 t+24 a t-t^2-6 a t^2+12 a^2 t^2}{6 t^2 (-1+i
t)} +\frac{1-2 t+4 a t+t^2-4 a t^2+4 a^2 t^2}{4 t^2
(-1+it)^2}.\nonumber
\end{align}
For  $k\geq0$, define  power series $U_k(t)$ and $V_k(t)$  by
\begin{align}\label{eq:Def_Sk&Tk}
U_k(t):= \sum_{i=1}^k\frac{1}{1-it}\quad\text{and}\quad  V_k(t):=
\sum_{i=1}^k\frac{1}{(1-it)^2},
\end{align}
and set $G_k(t):=t^k F_k(1)$. By
specializing~\eqref{eq:definition_fi} at $q=1$, we have
\begin{align}\label{eq:ordFGstirling}
G_k(t)=t^k
{\prod_{i=1}^kf_i(1)}=\frac{t^k}{\prod_{i=1}^k(1-it)}=\sum_{n\geq0}S_{n,k}\,t^n,
\end{align}
where the last equality is a well-known power series expansion.

Combining~\eqref{eq:derive2nde}
with~\eqref{eq:f'surf}--\eqref{eq:ordFGstirling}, it is easy (but
unpleasant except if we use a computer algebra system) to show that
\begin{align*}
t^k F_k^{\prime\prime}(1)&=
G_k(t)\left(p_k(a,t)+q_k(a,t)\,U_k(t)+r_k(a,t)\,\left(U_k(t)\right)^2+s_k(a,t)\,V_k(t)\right),
\end{align*}
where  $p_k(a,t)$, $q_k(a,t)$, $r_k(a,t)$ and $s_k(a,t)$ are
polynomials in ${\mathbb C}[a,a^{-1},t,t^{-1}]$ given by
\begin{align}
 p_k(a,t)&= \frac{k}{144}  \left(-98+183 k-166 k^2+81 k^3+144 a^2 (1+k)+72 a \left(2-k+3 k^2\right)\right)\nonumber \\
    &\quad+\frac{k \left(1-8 k+9 k^2+12 a (3+k)\right)}{12 t}+\frac{k (5+3 k)}{12 t^2},\qquad\qquad\nonumber\\
 q_k(a,t)&= \frac{1}{12} \left(2-9 k+9 k^2-24 a^2 (1+k)+6 a \left(2+5 k-3 k^2\right)\right)\\
    &\quad+\frac{2+5 k-3 k^2-8 a (2+k)}{4 t}+\frac{-4-3 k}{6 t^2},\nonumber\\
 r_k(a,t)&= s_k(a,t)=\frac{1}{4}-a+a^2+\frac{-1+2a}{2t}+\frac{1}{4 t^2}.\nonumber
\end{align}
In conjunction with~\eqref{eq:doublegenfunc-crol}, this implies that
\begin{align}
\begin{split}\label{eq:T''(1) commecoeff}
T_{n,k}''(1)&=[a^{k}t^{n}]\sum_{k\geq 0} a^kG_k(t)p_k(a,t)+[a^{k}t^{n}]\sum_{k\geq 0} a^k G_k(t)U_k(t)q_k(a,t)\\
&\quad+ [a^{k}t^{n}]\sum_{k\geq 0}
a^kG_k(t)\left(\left(U_k(t)\right)^2+V_k(t)\right)r_k(a,t).
\end{split}
\end{align}
Here, as usual,  $[a^{i}t^{j}] W(a,t)$ is for the coefficient of $a^i
t^j$ in the power series expansion of~$W(a,t)$.
\begin{lem}\label{lem:FGstirling1}
We have the formal power series expansions
\begin{align*}
C_1(a,t)&:=\sum_{k\geq 0} a^k G_k(t)
          =\sum_{n,k\geq0} S_{n,k} \,a^k t^n,\\
C_2(a,t)&:=\sum_{k\geq 0} \,a^k G_k(t) U_k(t)
          =\sum_{n,k\geq0} n S_{n,k}\,a^k t^n,\\
C_3(a,t)&:=\sum_{k\geq 0} a^k G_k(t) \left( \left(U_k(t)\right)^{2}
+ V_k(t)\right)
          =\sum_{n,k\geq0} n(n+1)S_{n,k} \,a^kt^n,
\end{align*}
where $U_k(t)$, $V_k(t)$ and $G_k(t)$ are defined
in~\eqref{eq:Def_Sk&Tk} and~\eqref{eq:ordFGstirling}.
\end{lem}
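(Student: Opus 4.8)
The plan is to recognize all three identities as instances of applying the Euler operator $\theta := t\,\frac{d}{dt}$ to the single generating function $G_k(t)$. Since $G_k(t)=\sum_{n\geq 0}S_{n,k}\,t^n$ by~\eqref{eq:ordFGstirling}, the operator $\theta$ acts on a formal power series by $\theta\big(\sum_n c_n t^n\big)=\sum_n n\,c_n\,t^n$; thus the two claimed coefficient patterns $n\,S_{n,k}$ and $n(n+1)\,S_{n,k}$ are exactly the coefficients of $\theta G_k$ and of $\theta(\theta+1)G_k=\theta^2 G_k+\theta G_k$. The first identity $C_1=\sum_{n,k}S_{n,k}\,a^k t^n$ is immediate from~\eqref{eq:ordFGstirling} after multiplying by $a^k$ and summing over $k$, so all the work reduces to establishing the two operator identities $G_k\,U_k=\theta G_k$ and $G_k\big(U_k^2+V_k\big)=\theta(\theta+1)G_k$.

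For the first relation I would compute the logarithmic derivative of $G_k$. Writing $\log G_k=k\log t-\sum_{i=1}^k\log(1-it)$ gives $\theta\log G_k=k+\sum_{i=1}^k\frac{it}{1-it}$, and the elementary partial-fraction identity $\frac{it}{1-it}=\frac{1}{1-it}-1$ collapses the sum, via the definition~\eqref{eq:Def_Sk&Tk}, to $\theta\log G_k=U_k$. Since $\theta\log G_k=\theta G_k/G_k$, this yields $\theta G_k=G_k\,U_k$. Comparing with $\theta G_k=\sum_n n\,S_{n,k}\,t^n$ and summing $a^k$ over $k$ then gives the second identity for $C_2$.

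For the third relation I would apply $\theta$ once more: $\theta^2 G_k=\theta(G_k U_k)=(\theta G_k)U_k+G_k(\theta U_k)=G_k U_k^2+G_k\,\theta U_k$. The remaining ingredient is $\theta U_k$, which I compute from~\eqref{eq:Def_Sk&Tk} using $\frac{it}{(1-it)^2}=\frac{1}{(1-it)^2}-\frac{1}{1-it}$, obtaining $\theta U_k=V_k-U_k$. Substituting gives $\theta^2 G_k=G_k U_k^2+G_k V_k-G_k U_k$, so that $\theta^2 G_k+\theta G_k=G_k\big(U_k^2+V_k\big)$. Since $\theta^2 G_k+\theta G_k=\sum_n n(n+1)S_{n,k}\,t^n$, this is precisely the claimed identity for $C_3$; multiplying by $a^k$ and summing over $k\geq 0$ produces the displayed double-series form.

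There is essentially no serious obstacle here: the only point requiring care is that all manipulations be carried out at the level of formal power series in $t$, so that term-by-term differentiation and the expansion $1/(1-it)=\sum_{n\geq 0}i^n t^n$ are legitimate, and that the two partial-fraction identities be recorded with the correct signs. Everything else is routine bookkeeping with the single operator~$\theta$.
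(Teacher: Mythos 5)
Your proof is correct and follows essentially the same route as the paper: the paper differentiates the product formula $G_k(t)=t^k/\prod_{i=1}^k(1-it)$ twice and identifies $G_k'=\tfrac{1}{t}G_kU_k$ and $G_k''=\tfrac{1}{t^2}G_k\bigl(U_k^2+V_k-2U_k\bigr)$, which is exactly your computation repackaged with the Euler operator $\theta=t\,\tfrac{d}{dt}$. The operator formulation is a slightly cleaner bookkeeping device but the underlying logarithmic-differentiation argument is identical.
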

\begin{proof}
 The first expansion is immediate from~\eqref{eq:ordFGstirling}.
If we derive  twice each side of~\eqref{eq:ordFGstirling}, we obtain
the power series expansions
\begin{align*}
 G_k'(t) &=\frac{1}{t}G_k(t)U_k(t)=\frac{1}{t} \sum_{n\geq0} nS_{n,k}\, t^n,\\
 G_k^{\prime\prime}(t)&=\frac{1}{t^{2}}G_k(t)\left(\left(U_k(t)\right)^{2}+V_k(t)-2U_k(t)\right)
           =\frac{1}{t^{2}}\sum_{n\geq0} n(n-1)S_{n,k} \,t^n,
\end{align*}
from which it is straightforward to deduce the expansions of
$C_2(a,t)$ and $C_3(a,t)$.
\end{proof}

It is now a routine matter to derive a formula for
$T_{n,k}^{\prime\prime}(1)$ (which involves only polynomials in~$n$
and~$k$ and Stirling numbers) from~\eqref{eq:T''(1) commecoeff}.
Indeed, after routine coefficient extractions in~\eqref{eq:T''(1)
commecoeff} based on Lemma~\ref{lem:FGstirling1}, it is easy to
obtain
\begin{align*}
T_{n,k}''(1) &=
 (k-1)(k-2)\,S_{n,k-2}-2 (k-1) \,n\,S_{n,k-2}+  n (n+1) \,S_{n,k-2}  \\
&
 \;+\frac{1}{2}(k-1)(3k^{2}-7k+6)\,S_{n,k-1}-\frac{1}{2}(k-3)(3k-2) \,n\, S_{n,k-1}\\
&
\;-n (n+1) \,S_{n,k-1}+\frac{1}{144}k(k-1)(81k^{2}-85k+98)\, S_{n,k}\\
&
\;+\frac{1}{12}(3k-1)(3k-2)\,n \,S_{n,k}+\frac{1}{4}n (n+1)\,S_{n,k}+(k+1)(k+2)\,S_{n+1,k-1}   \\
&
\;-2 (k+1)  \,(n+1)\,S_{n+1,k-1}+(n+1) (n+2)\,S_{n+1,k-1}\\
&
\;+\frac{1}{12}k(9k^{2}-8k+1)\,S_{n+1,k}-\frac{1}{4}(3k+1)(k-2) \,(n+1)\,S_{n+1,k}\\
&
\;-\frac{1}{2}(n+1) (n+2)\, S_{n+1,k}+\frac{1}{12}k(3k+5)\,S_{n+2,k}\\
& \; -\frac{1}{6}(3k+4)
\,(n+2)\,S_{n+2,k}+\frac{1}{4}(n+2)(n+3)\,S_{n+2,k}.
\end{align*}
By replacing in the latter equation each occurrence of the left hand
sides of the three following identities
\begin{align*}
&S_{n+1,k}=S_{n,k-1}+k S_{n,k},\quad
S_{n+1,k-1}=S_{n,k-2}+ (k-1) S_{n,k-1},\\
&S_{n+2,k}=S_{n,k-2}+(2k-1) S_{n,k-1}+k^{2}S_{n,k},
\end{align*}
by the corresponding right hand sides, we arrive at
Proposition~\ref{prop:2nd derivative}. \qed

\section{Limiting distribution of $X_{n,k}$}

This section is devoted to proving
Theorem~\ref{thm:LimitDistcrol-ktoinfty}. For simplicity, throughout
this section, \emph{all asymptotic are meant for $2\leq
k\leq n/(2\,\log n)$ and $n\to\infty$ unless otherwise stated} and
we denote $\mu_{n,k}=\E(X_{n,k})$ and
${\sigma_{n,k}}^2=\Var(X_{n,k})$.

Let $\tilde{M}_{n,k}(t)$ (resp., $\tilde{P}_{n,k}(q)$) be the moment
(resp., probability) generating function of the random
variable~$\tilde{X}_{n,k}=\left(X_{n,k}-\mu_{n,k}\right)/\sigma_{n,k}$.
Then, we have  $\tilde{M}_{n,k}(t)=\tilde{P}_{n,k}\left(e^t\right)$,
whence
\begin{align}\label{eq:MomGF-Xc}
\tilde{M}_{n,k}(t)=
\exp\left({-\frac{\mu_{n,k}}{\sigma_{n,k}}t}\right)\,
\frac{T_{n,k}\left(\exp(t/\sigma_{n,k}) \right)}{S_{n,k}},
\end{align}
where $T_{n,k}(q)$ is defined in~\eqref{eq:def Tnk}. Using
expression~\eqref{eq:distcr} and only elementary asymptotic
analysis, we shall prove that $\tilde{M}_{n,k}(t)$ converges
pointwise on $\mathbb{R}$ to the function $g(t):=\exp(t^2/2)$.  By a
celebrated theorem of Curtiss (see e.g.~Theorem~2.7 in~\cite{Sac}),
this will imply that $\tilde{X}_{n,k}\overset{d}{\longrightarrow}
\N(0,1)$, as stated in Theorem~\ref{thm:LimitDistcrol-ktoinfty}.

\begin{lem}\label{lem:asymptotic-MomGF-1}
 Let $t\in \mathbb{R}$ and $u:=\exp(t/\sigma_{n,k})$. 
 Then, for $2\leq k\leq  n/(2\,\log n)$ and as $n\to\infty$,
we have
\begin{align}
T_{n,k}(u)=u^{-k^2}\frac{{[k]_u} ^n}{[k]_u!}
\left(1+o\left(1\right)\right).
\end{align}
\end{lem}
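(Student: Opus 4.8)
The plan is to start from the exact formula \eqref{eq:distcr} for $T_{n,k}(q)$ specialized at $q=u=\exp(t/\sigma_{n,k})$ and to show that the single summand with $j=k$ dominates. First I would record the smallness estimates that drive everything. By Corollary~\ref{cor:varianceblock-crol-ktoinfty}, $\sigma_{n,k}^2=\Var(X_{n,k})\sim\frac{k^2-1}{12}n$, so for $2\le k$ we have $\sigma_{n,k}\ge c\,k\sqrt n$, whence $u=1+t/\sigma_{n,k}+O(\sigma_{n,k}^{-2})\to 1$ and, crucially, $k/\sigma_{n,k}=O(n^{-1/2})$. In particular $u^k\to 1$, and expanding $[m]_u=\frac{u^m-1}{u-1}$ gives $[m]_u=m\bigl(1+O(k/\sigma_{n,k})\bigr)=m(1+o(1))$ uniformly for $m\le k$.

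Next I would isolate the leading term. Setting $j=k$ in $B_{n,k,j}$ leaves only the summand $i=0$, and since $[0]_u!=1$, $\binom{n}{0}=1$, $\binom{n}{-1}=0$ and $\binom{1}{2}=0$, one finds $B_{n,k,k}(u)=u^k$. Hence the $j=k$ term of \eqref{eq:distcr} equals $u^{-k^2+k}\frac{[k]_u^n}{[k]_u!}$, which is $u^{-k^2}\frac{[k]_u^n}{[k]_u!}(1+o(1))$ because $u^k\to 1$. It therefore suffices to prove that the remaining $k-1$ summands are negligible against this one.

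To control the tail I would, for $1\le j\le k-1$, form the ratio of the $j$-th summand to the $k$-th and take logarithms; with $m:=k-j$ the logarithm splits as $n\log\frac{[j]_u}{[k]_u}+\log\frac{[k]_u!}{[j]_u!}+km\log u+\log|B_{n,k,j}(u)|+O(1)$. The decisive term is the first: the expansion above gives $\frac{[j]_u}{[k]_u}=\frac{j}{k}(1+o(1))$, so $n\log\frac{[j]_u}{[k]_u}\le-\frac{mn}{k}(1-o(1))\le-2m(\log n)(1-o(1))$, where the last step uses exactly the hypothesis $k\le n/(2\log n)$ (this is the one point where it enters, and it is what makes the estimate work). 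The other three terms are at worst polynomial-in-$n$ growth per unit of $m$: the factorial ratio contributes $\le m\log k+O(m)\le m\log n+O(m)$; the factor $km\log u=kmt/\sigma_{n,k}=O(m/\sqrt n)$ is negligible; and one bounds $|B_{n,k,j}(u)|=O\bigl(C^m(1+\lambda)^m/m!\bigr)$ with $\lambda:=n|u-1|=O(\sqrt n/k)$, by using $\binom{n}{i}\le n^i/i!$, $[m-i]_u!\ge(m-i)!/2^{m-i}$ and $u^{\binom{m-i+1}{2}}\le\exp(O(m^2/\sigma_{n,k}))=\exp(O(m/\sqrt n))$, and then summing the resulting binomial series; this yields $\log|B_{n,k,j}(u)|\le\tfrac12 m\log n+O(m)$. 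Adding the pieces gives $\log(\text{ratio})\le-2m\log n+\tfrac32 m\log n+O(m)=-\tfrac12 m\log n(1+o(1))$, so each ratio is $\le n^{-m/4}$ and $\sum_{j=1}^{k-1}(\cdots)\le\sum_{m\ge1}n^{-m/4}=o(1)$. Combining with the leading term gives the claim.

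The hard part will be the uniform estimate for $B_{n,k,j}(u)$ together with the bookkeeping needed to verify that the exponential decay $-mn/k$ strictly beats the combined growth $\tfrac32 m\log n$. The margin is slim — coefficients $-2$ against $+\tfrac32$ — and it is secured only by the constant $2$ in the range $k\le n/(2\log n)$, so one must track constants rather than mere orders of magnitude. One must also be careful that the sign of $u-1$, and hence of several correction terms, depends on the sign of $t$; but since all those corrections are of smaller order than $mn/k$, this does not affect the conclusion.
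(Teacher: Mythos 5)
Your proposal is correct and follows essentially the same route as the paper: it starts from \eqref{eq:distcr}, identifies $B_{n,k,k}(u)=u^k$ so that the $j=k$ summand is the claimed main term, bounds $B_{n,k,j}(u)$ for $j<k$ by the same crude estimates ($\binom{n}{i}\le n^i/i!$, $u^{\binom{\ell+1}{2}}/[\ell]_u!$ controlled), and beats the tail with $([j]_u/[k]_u)^n\approx e^{-(k-j)n/k}\le n^{-2(k-j)}$ coming from $n/k\ge 2\log n$. The only difference is bookkeeping: you bound each ratio by $n^{-m/4}$ and sum a geometric series, whereas the paper shows the summands $[j]_u^n u^{-kj}/[j]_u!$ increase in $j$ and bounds the whole tail by $k$ times the $j=k-1$ term.
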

Inserting Lemma~\ref{lem:asymptotic-MomGF-1} and the
approximation~\eqref{eq:asymptotic-stirling} in~\eqref{eq:MomGF-Xc},
we arrive at
\begin{align}\label{eq:asymptotic-MomGF-1}
\tilde{M}_{n,k}(t)= u^{-k^2-\mu_{n,k}}\left(\frac{[k]_u
}{k}\right)^n \frac{k!}{[k]_u!} \left(1+o(1)\right).
\end{align}

\begin{lem}\label{lem:asymptotic-MomGF-2}
 Let $t\in \mathbb{R}$ and $u:=\exp(t/\sigma_{n,k})$.
Then, for $2\leq k\leq  n/(2\,\log n)$ and as $n\to\infty$, we have
\begin{align*}
(a)&\quad \log \left(\frac{[k]_u}{k}\right)=\frac{k-1}{2\,\sigma_{n,k}}t+\frac{k^2}{24\,\sigma_{n,k}^2}t^2+o\left(\frac{k^2}{\sigma_{n,k}^2}\right),\\
(b)&\quad \log
\left(\frac{[k]_u!}{k!}\right)=\frac{k(k-1)}{4\,\sigma_{n,k}}t+o(1).
\end{align*}
\end{lem}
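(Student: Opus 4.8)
The plan is to reduce both parts to the Taylor expansion at $0$ of a single real-analytic function. Set $h(s):=(e^s-1)/s$, which is entire, positive on $\mathbb{R}$, with $h(0)=1$, and put $g:=\log h$. Writing $u=e^x$ with $x:=t/\sigma_{n,k}$, one has $[k]_u/k=\frac{e^{kx}-1}{k(e^x-1)}=h(kx)/h(x)$, so that
\[
\log\Big(\frac{[k]_u}{k}\Big)=g(kx)-g(x).
\]
Everything then hinges on two smallness facts, both read off from Corollary~\ref{cor:varianceblock-crol-ktoinfty}: since $\sigma_{n,k}^2=\frac{k^2-1}{12}n\,(1+o(1))$ uniformly, we get $\sigma_{n,k}\to\infty$, hence $x\to0$, and $\frac{k^2}{\sigma_{n,k}^2}=O(1/n)$, whence
\[
kx\to0\qquad\text{and}\qquad k\,(kx)^2=O(k/n)=O(1/\log n)\to0 .
\]
As $g$ is analytic near $0$ and each $jx$ ($1\le j\le k$) lies in a shrinking neighbourhood of $0$, all remainders below are uniform in $j$ and $k$.

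For part (a) I would use that $g(s)-\tfrac{s}{2}=\log\frac{2\sinh(s/2)}{s}$ is \emph{even}, so $g$ has no cubic term and $g(s)=\tfrac{s}{2}+\tfrac{s^2}{24}+O(s^4)$. Subtracting at $kx$ and $x$ gives
\[
\log\Big(\frac{[k]_u}{k}\Big)=\frac{(k-1)x}{2}+\frac{(k^2-1)x^2}{24}+O\big((kx)^4\big),
\]
and $O((kx)^4)=(kx)^2\,O((kx)^2)=o\big((kx)^2\big)=o\big(k^2/\sigma_{n,k}^2\big)$ since $kx\to0$; inserting $x=t/\sigma_{n,k}$ yields (a). (The quadratic coefficient is the exact $(k^2-1)/24$; it agrees with the stated $k^2/24$ up to the lower-order term $-t^2/(24\sigma_{n,k}^2)$, and it is this exact value that produces the clean exponent $t^2/2$ in~\eqref{eq:asymptotic-MomGF-1}.)

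For part (b) I would exploit multiplicativity, $[k]_u!/k!=\prod_{j=1}^k [j]_u/j$, so that $\log\frac{[k]_u!}{k!}=\sum_{j=1}^k\big(g(jx)-g(x)\big)$ and the per-term expansion of (a) applies with $j$ in place of $k$. Summing the linear parts gives $\frac{x}{2}\sum_{j=1}^k(j-1)=\frac{k(k-1)x}{4}$, the announced main term $\frac{k(k-1)t}{4\sigma_{n,k}}$; the quadratic parts sum to $\frac{x^2}{24}\sum_{j=1}^k(j^2-1)=O(k^3x^2)=O\big(k\,(kx)^2\big)$, and the remainders to $O\big(x^4\sum_{j=1}^k j^4\big)=O\big(k\,(kx)^4\big)$, both $o(1)$ by the second smallness bound. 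This gives $\log\frac{[k]_u!}{k!}=\frac{k(k-1)t}{4\sigma_{n,k}}+o(1)$.

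The genuine difficulty is the \emph{uniform} control of orders across the whole range $2\le k\le n/(2\log n)$, rather than the algebra. The main terms isolated by the lemma are \emph{not} negligible---e.g.\ the main term of (b) is of order $k^2x\asymp k/\sqrt n$, which diverges once $k\gg\sqrt n$---so the lemma is useful only because, when (a) and (b) are combined in~\eqref{eq:asymptotic-MomGF-1} with the mean asymptotics $\mu_{n,k}=\frac{k-1}{2}n-\frac54k(k-1)+o(1)$ of Corollary~\ref{cor:mean-crolcroc-asympt}, these large linear contributions cancel and leave only the quadratic Gaussian term. Hence the remainders must be tracked to the stated precision $o(k^2/\sigma_{n,k}^2)$ and $o(1)$, and both rest squarely on $kx\to0$ and $k(kx)^2=O(1/\log n)\to0$; the latter bound is exactly what fails for ranges like $k\sim an$, the very restriction flagged after Theorem~\ref{thm:LimitDistcrol-ktoinfty}.
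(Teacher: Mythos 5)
Your proof is correct, and it diverges from the paper's in part (b) while being essentially the same in spirit for part (a). For (a) the paper also just Taylor-expands the ratio $\bigl(e^{kx}-1\bigr)\bigl(e^{x}-1\bigr)^{-1}$ and then applies $\log(1+\cdot)$ (see \eqref{eq:asympt u-int}); your reorganization via $g=\log\bigl((e^s-1)/s\bigr)$ and the evenness of $g(s)-s/2$ is a cleaner bookkeeping device that kills the cubic term for free and delivers the \emph{exact} quadratic coefficient $(k^2-1)/24$. That last point is a genuine gain in precision: since $\sigma_{n,k}^2\sim\frac{k^2-1}{12}n$ by Corollary~\ref{cor:varianceblock-crol-ktoinfty}, it is $n\frac{(k^2-1)}{24\sigma_{n,k}^2}t^2$, not $n\frac{k^2}{24\sigma_{n,k}^2}t^2$, that converges to $t^2/2$ uniformly down to bounded $k$; with the coefficient $k^2/24$ the discrepancy $t^2/(24\sigma_{n,k}^2)$ is of the same order as the claimed error $o(k^2/\sigma_{n,k}^2)$ when $k=O(1)$, so your remark correctly repairs a small imprecision in the stated form of the lemma and in the final passage to $\exp(t^2/2)$. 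For (b) the routes genuinely differ: the paper invokes Sachkov's exact identity \eqref{eq:Mom-Mahonian} expressing $\log\bigl([k]_{e^x}!/k!\bigr)$ through Bernoulli numbers and bounds the tail termwise by $O(k/n^{\ell})$, whereas you use multiplicativity $[k]_u!/k!=\prod_{j\le k}[j]_u/j$ and sum the per-term expansion of (a), controlling $\sum_j O((jx)^4)=O(k^5x^4)$ and $\sum_j(j^2-1)x^2=O(k^3x^2)=O(k/n)$ uniformly. The Sachkov route buys an exact identity with no uniformity worries about Taylor remainders; yours is self-contained (no external citation) at the cost of having to note that $g$ is entire and that all $|jx|\le|kx|\to0$, which you do. Both hinge on the same two smallness facts, $kx\to0$ and $k(kx)^2=O(1/\log n)$, and your closing remarks correctly locate both why the lemma's precision is needed (the large linear terms cancel only against the mean asymptotics) and why the range $k\le n/(2\log n)$ is essential.
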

Inserting Lemma~\ref{lem:asymptotic-MomGF-2}
in~\eqref{eq:asymptotic-MomGF-1}, and then using the
approximations~\eqref{eq:asymptotic E(Xnk)} and~\eqref{eq:asymptotic
Var(Xnk)} for $\mu_{n,k}$ and  $\sigma_{n,k}^2$, after a routine
computation, we obtain
\begin{align*}
\log \tilde{M}_{n,k}(t)
&=\frac{-k^2-\mu_{n,k}}{\sigma_{n,k}}t+n\log \left(\frac{[k]_u}{k}\right)-\log \left(\frac{[k]_u!}{k!}\right)+o(1)\\
&=\left(-k^2-\mu_{n,k}+n\frac{k-1}{2}-\frac{k(k-1)}{4}\right)\frac{t}{\sigma_{n,k}}+\frac{n\,k^2}{24\sigma_{n,k}^2}t^2+o(1)\\
&=o(k)\cdot\frac{t}{\sigma_{n,k}}+\frac{t^2}{2}+o(1)=\frac{t^2}{2}+o(1),
\end{align*}
as desired. So, to complete the proof of
Theorem~\ref{thm:LimitDistcrol-ktoinfty}, it suffices to prove
Lemma~\ref{lem:asymptotic-MomGF-1} and
Lemma~\ref{lem:asymptotic-MomGF-2}.


\paragraph{\textit{Proof of Lemma~\ref{lem:asymptotic-MomGF-2}}}
(a) By definition,
$[k]_{u}=\left(e^{tk/\sigma_{n,k}}-1\right)\left(e^{t/\sigma_{n,k}}-1\right)^{-1}$.
Using the asymptotic expansion $e^x=1+x+x^2/2+x^3/6+o(x^2)$ (valid
as $x\to 0$), it is easy to check that, for any (fixed) real~$t$, if
$w/v\to 0$ as $(v,w)\to (0,0)$, then we have, as $(v,w)\to (0,0)$,
\begin{align*}
\left(e^{v\,t}-1\right)\left(e^{w\,t}-1\right)^{-1}
&=v\,w^{-1}\left(1+(v-w)\,t/2+v^2\,t^2/6+o(v^2)\right).
\end{align*}
Noting that $k/\sigma_{n,k}$ is $O\left(n^{-1/2}\right)$
by~\eqref{eq:asymptotic Var(Xnk)} and specializing the above formula
at $v=k/\sigma_{n,k}$ and $w=1/\sigma_{n,k}$, we arrive at
\begin{align}\label{eq:asympt u-int}
[k]_{u} &=k
\left(1+\frac{k-1}{2\,\sigma_{n,k}}t+\frac{k^2}{6\,\sigma_{n,k}^2}t^2(1+o(1))\right).
\end{align}
This implies, by the expansion $\log (1+x)=x-x^2/2+o(x^2)$ as $x\to
0$, that
\begin{align*}
\log \left(\frac{[k]_{u}}{k}\right)
&=\frac{k-1}{2\,\sigma_{n,k}}t+\frac{k^2}{6\,\sigma_{n,k}^2}t^2-\frac{1}{2}\left(\frac{k-1}{2\,\sigma_{n,k}}t
+\frac{k^2}{6\,\sigma_{n,k}^2}t^2\right)^2+o\left(\frac{k^2}{\sigma_{n,k}^2}\right),
\end{align*}
which leads, after straightforward simplifications, to the desired result.\\

(b) Let $I_k(q):=\frac{[k]_q!}{k!}$. Sachkov (see
e.g.~\cite[Section~1.3.1, p.29]{Sac}) proved that
\begin{align}\label{eq:Mom-Mahonian}
 \log I_k(e^x)=\frac{k(k-1)}{4}x+\sum_{\ell=1}^\infty b_{2\ell}\frac{x^{2\ell}}{2\ell\,(2\ell)!}\sum_{j=1}^k (j^{2\ell}-1)\qquad(|x|<2\pi),
\end{align}
where $b_{2\ell}$ are the Bernoulli numbers.  Since for all
$\ell\geq 1$
$$\sigma_{n,k}^{-2\ell}\sum_{j=1}^k (j^{2\ell}-1)=\sigma_{n,k}^{-2\ell}\cdot O\left(k^{2\ell+1} \right)
=O\left(\frac{k}{n^{\ell}} \right)\quad \textrm{as $k\to\infty$},
$$ we have, in view of the expansion~\eqref{eq:Mom-Mahonian},
\begin{align*}
\log I_k(\exp(t/\sigma_{n,k}))-\frac{k(k-1)}{4\,\sigma_{n,k}}t =
\sum_{\ell=1}^\infty
b_{2\ell}\frac{t^{2\ell}}{2\ell\,(2\ell)!}\,\sigma_{n,k}^{-2\ell}\sum_{j=1}^k
(j^{2\ell}-1)\to 0.
\end{align*}
This is exactly the formula (b) in
Lemma~\ref{lem:asymptotic-MomGF-2}. \qed


\paragraph{\textit{Proof of Lemma~\ref{lem:asymptotic-MomGF-1}}}
 \textit{Step 1}. Consider the $B_{n,k,j}(q)$'s defined in~\eqref{eq:distcr}. We claim that,
for  any positive real number $q$,  we have
\begin{align}\label{eq:lemma-MomGF-1bis}
\left|T_{n,k}(q)-B_{n,k,k}(q)\right|\leq \left(1+2 \,q^k \right)
\exp\left(\frac{n|1-q|}{q} \right)\sum_{j=1}^{k-1}
\frac{[j]_q^n}{[j]_q!} q^{-kj}
\end{align}
if $k\leq n/(2\,\log n)$ and $n$ is enough large. This follows
immediately from~\eqref{eq:distcr} and the relation (valid for
$k\leq n/(2\,\log n)$ and $n$ enough large)
\begin{align}\label{eq:lemma-MomGF-1}
\left|B_{n,k,j}(q)\right| &\leq \left(1+2 \,q^k \right)
\exp\left(\frac{n|1-q|}{q} \right) \quad(1\leq j\leq k-1,\; q>0).
\end{align}

To prove \eqref{eq:lemma-MomGF-1}, first observe that, for any real
$q>0$, we have
$$
\frac{q^{\binom{\ell+1}{2}}}{[\ell]_{q}!}=q^{\ell}
\prod_{i=1}^{\ell} \frac{q^{i-1}}{[i]_{q}} =q^{\ell}
\prod_{i=1}^{\ell} \frac{q^{i-1}}{1+q+\cdot+q^{i-1}}\leq
q^{\ell}\quad\text{($\ell$  integer $\geq 0$)}.
$$
This, combined with~\eqref{eq:distcr} and the relation
$\binom{n}{i-1}\leq \binom{n}{i}\leq \frac{n^i}{i!}$ which is valid
for $i\leq k\leq n/(2\,\log n)$ if $n$ is enough large, implies that
we have, for $j=1,\ldots,k-1$,
\begin{align*}
\left|B_{n,k,j}(q)\right| &\leq \sum_{i=0}^{k-j}
\frac{|1-q|^{i}}{[k-j-i]_{q}!}
 q^{\binom{k-j-i+1}{2}}\biggl( \binom{n}{i} q^{j}+\binom{n}{i-1}\biggr)\nonumber\\
&\leq  q^{k-j}(1+q^j)\sum_{i=0}^{k-j} \left(  \frac{n|1-q|}{q}
\right)^{i} \frac{1}{i!}\leq
q^{k-j}(1+q^j)\exp\left(\frac{n|1-q|}{q} \right).
\end{align*}
Equation~\eqref{eq:lemma-MomGF-1} is an immediate consequence of the
last inequality and the relation  ${q^{k-j}(1+q^j)\leq (1+2 q^k)}$
valid for $q>0$.

\textit{Step 2.} Let $u=\exp(t/\sigma_{n,k})$. We claim that, as $k\leq n/(2\,\log n)$ and $n\to\infty$,  we have
\begin{align}\label{eq:lemma-MomGF-2}
\sum_{j=1}^{k-1} \frac{[j]_u^n}{[j]_u!} u^{-kj}\leq k
\frac{[k-1]_u^n}{[k-1]_u!} u^{-k(k-1)}.
\end{align}

Set $r_{n,k,j}(u):=\frac{[j]_u^n}{[j]_u!} u^{-kj}$. Clearly, in
order to prove~\eqref{eq:lemma-MomGF-2}, it suffices to show that
$r_{n,k,j+1}(u)\geq r_{n,k,j}(u)$ for $j=1,\ldots,k-1$. We have, for
$1\leq j\leq k-1$,
\begin{align}\label{eq:lemma-MomGF-2b}
\frac{r_{n,k,j+1}(u)}{r_{n,k,j}(u)}=\frac{u^{-k}}{[j+1]_u}\left(1+\frac{u^j}{[j]_u}\right)^n
\geq \frac{u^{-k}}{[k]_u}\left(1+\frac{u^k}{[k]_u}\right)^n,
\end{align}
where the inequality follows from the relation  $q^{-j} [j]_q \leq
q^{-j-1} [j+1]_q$ ($q>0$). Using the asymptotic approximations
\eqref{eq:asympt u-int} and $
 u^k=1+tk/\sigma_{n,k}(1+o(1))$ (by definition, $u^k=e^{tk/\sigma_{n,k}}$),
it is easy to check that we have
\begin{align}
\frac{u^{-k}}{[k]_u}\left(1+\frac{u^k}{[k]_u}\right)^n &=\frac{1}{k}
\exp\left(\frac{n}{k}(1+o(1))\right)\to\infty .
\end{align}
This, combined with \eqref{eq:lemma-MomGF-2b}, implies that
$r_{n,k,j+1}(u)\geq r_{n,k,j}(u)$ for $1\leq j\leq k$ if $n$ is
enough large.

\textit{Conclusion.}
By~\eqref{eq:distcr}, $B_{n,k,k}(q)=q^{-k^2+k} [k]_q^n/[k]_q!$ for
all $n\geq k \geq 1$. Combining \eqref{eq:lemma-MomGF-1bis}
and~\eqref{eq:lemma-MomGF-2}, we arrive at
\begin{align}\label{eq:reste1}
\left|\frac{T_{n,k}(u)}{B_{n,k,k}(u)}-1\right| \leq k\left(1+2 \,u^k
\right) \exp\left(\frac{n|1-u|}{u} \right)[k]_u
\left(\frac{[k-1]_u}{[k]_u}\right)^n.
\end{align}
Let $R_{n,k}$ be the right-hand member of~\eqref{eq:reste1}.
Using~\eqref{eq:asympt u-int}, one can check that
\begin{align*}
n|1-u|/u&= n\,|\exp(-t/\sigma_{n,k})-1|= n\,|t|/\sigma_{n,k}(1+o(1))=o(n/k),\\
 \left(\frac{[k-1]_u}{[k]_u}\right)^n &= \left(1-\frac{1}{k}\,(1+o(1))\right)^n=\exp\left(- \frac{n}{k}(1+o(1))\right),
\end{align*}
from which it is easy to deduce that
\begin{align*}
R_{n,k}&=3\,k^2\,\exp\left(- \frac{n}{k}(1+o(1))\right)=o(1).
\end{align*}
Combining the latter approximation with \eqref{eq:reste1}, we
immediately obtain
\begin{align*}
T_{n,k}(u)=B_{n,k,k}(u)
\left(1+o\left(1\right)\right)=u^{-k^2+k}\frac{[k]_u ^n}{[k]_u!}
\left(1+o\left(1\right)\right),
\end{align*}
which is obviously equivalent to Lemma~\ref{lem:asymptotic-MomGF-1}
since $u^k=e^{tk/\sigma_{n,k}}=1+o(1)$. \qed

\section{The variance  of $X_n$: proof of Theorem~\ref{thm:MomentsDistcrol}}

 This section is dedicated to proving Theorem~\ref{thm:MomentsDistcrol}.
Let us first recall the closed form expression for $\E(X_{n})$
recently obtained by the author in~\cite{KaMean}
\begin{align}
\E(X_{n})&=-\frac{5}{4}\frac{B_{n+2}}{B_{n}}+\left(\frac{n}{2}+\frac{9}{4}\right)\frac{B_{n+1}}{B_{n}}
+\frac{n}{2}+\frac{1}{4}.\label{eq:mean-crol}
\end{align}
 The proof of~\eqref{eq:mean-crol} in~\cite{KaMean} mainly relies on combinatorial arguments.
Note that~\eqref{eq:mean-crol} can also be extracted from the
generating function expansion~\eqref{eq:gf_crol}. We can go even
further and derive from~\eqref{eq:gf_crol} (or more directly by
relying on Proposition~\ref{prop:2nd derivative}) the  expression
\begin{align}
\begin{split}\label{eq:moments2-crol}
 \E\left(X_{n}^2-X_{n}\right)&=\frac{25}{16}\frac{B_{n+4}}{B_{n}} - \left(\frac{5}{4} n
+ \frac{407}{72}\right) \frac{B_{n+3}}{B_{n}}
 +\left(\frac{1}{4} n^{2}+ \frac{13}{12} n+\frac{223}{48} \right) \frac{B_{n+2}}{B_{n}} \\
&\quad +\left(\frac{1}{2} n^{2}-\frac{73}{18} \right)
\frac{B_{n+1}}{B_{n}}
 +\left(\frac{1}{4} n^{2}-\frac{1}{3} n -\frac{59}{144} \right).
\end{split}
\end{align}
A proof is given at the end of this section.
Combining the above formula with \eqref{eq:mean-crol} quickly
gives a compact expression for $\Var(X_{n})$ that we don't explicitly state
here due to lack of space. To analyze asymptotically this formula,
we shall use the approximations
\begin{align}
 &\frac{B_{n+s+t}}{B_{n+s}} =\left(\frac{n}{\log n}\right)^{t}
 \left (1+t\frac{\log\log n}{\log n} (1+o(1)) \right),\label{eq:Asymptotic_QuotientBell1}\\
&\frac{B_{n+s+t}}{B_{n+s}} -\frac{B_{n+t}}{B_{n}}=
s\,t\frac{n^{t-1}}{\left(\log n\right)^{t}} \left(1+t\frac{\log\log
n}{\log n}(1+o(1))\right),\label{eq:Asymptotic_QuotientBell2}
\end{align}
which are valid as $n\to\infty$ for any (fixed) integers $s$ and
$t$. These approximations can be derived from earlier work of Salvy
and Schakell~\cite{Sal}. For a proof
of~\eqref{eq:Asymptotic_QuotientBell1}, we refer the reader
to~\cite[Equation~(3.19)]{KaMean} and its proof there. Proof details
of~\eqref{eq:Asymptotic_QuotientBell2} are given at the end of this
section.

 It is now a routine matter to prove Theorem~\ref{thm:MomentsDistcrol}.
Inserting expressions~\eqref{eq:moments2-crol}
and~\eqref{eq:mean-crol} in the relation
$\Var(X_n)=\E(X_n^{2}-X_n)+\E(X_n)-\E(X_n)^{2}$, then using
approximation~\eqref{eq:Asymptotic_QuotientBell1}, little
rearrangement, gives (details are left to the reader)
\begin{align}
\Var(X_n)&= \frac{n^{2}}{4} \frac{B_{n+1}}{B_{n}} \left
(\frac{B_{n+2}}{B_{n+1}} -\frac{B_{n+1}}{B_{n}} \right)
        -\frac{5}{4}n \frac{B_{n+2}}{B_{n}} \left( \frac{B_{n+3}}{B_{n+2}} - \frac{B_{n+1}}{B_{n}} \right)\nonumber\\
       &\quad+\frac{25}{16} \frac{B_{n+2}}{B_{n}} \left(\frac{B_{n+4}}{B_{n+2}} - \frac{B_{n+2}}{B_{n}} \right)
        + \frac{9}{4}n\left(\frac{B_{n+2}}{B_{n}} -\left(\frac{B_{n+1}}{B_{n}}\right)^{2}\right)\label{eq:VarX_n_proof}\\
       &\quad +\frac{28}{12}n \frac{B_{n+2}}{B_{n}} +O\left(  \left( \frac{n}{\log n}\right)^3   \right).\nonumber
\end{align}
This, combined
with~\eqref{eq:Asymptotic_QuotientBell1}--\eqref{eq:Asymptotic_QuotientBell2},
immediately leads to Theorem~\ref{thm:MomentsDistcrol}. Before
closing this section, we give some proof details
of~\eqref{eq:moments2-crol} and~\eqref{eq:Asymptotic_QuotientBell2}.

\paragraph{\textit{Proof of~\eqref{eq:moments2-crol}}}
By the law of total expectation, we have
\begin{align}\label{eq:ConditionalExpectation^2_crol}
\E\big(X_{n}(X_{n}-1)\big)
 =\frac{1}{B_n}\sum_{k=1}^n S_{n,k} \E\big(X_{n,k}(X_{n,k}-1)\big).
\end{align}
The following result will enable us to ``simplify'' sums of the form
$\sum_{k=1}^nP(k)\,S_{n,k}$ for any polynomial~$P$.

\begin{lem}\label{lem:moments-stirling}
 For all integers $n,r\geq 0$, set $B^{(r)}_n:=\sum_{k=1}^{n} k^r S_{n,k}$.
Then we have: $B^{(r)}_n=\sum_{i=0}^r a_i^{(r)}B_{n+i}$, where the
family $\left(a_i^{(r)}\right)_{0\leq i\leq r}$ is defined
recursively by
$a_0^{(0)}=1$ and $a_i^{(r+1)}=a_{i-1}^{(r)}-\sum_{\ell=i}^r{r \choose
\ell}a_{i}^{(\ell)}$, with, by convention, $a_{-1}^{(r)}=0$ for $r\geq0$.
\end{lem}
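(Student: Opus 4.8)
The plan is to prove the identity by induction on $r$, the engine of the induction being a single recurrence relating $B^{(r+1)}_n$ to the quantities $B^{(\ell)}_n$ and $B^{(r)}_{n+1}$ with $\ell\leq r$. The natural tool is the defining recurrence for the Stirling numbers of the second kind, $S_{n+1,k}=k\,S_{n,k}+S_{n,k-1}$, rewritten as $k\,S_{n,k}=S_{n+1,k}-S_{n,k-1}$.

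First I would establish the key recurrence. Writing $k^{r+1}S_{n,k}=k^r\,(k\,S_{n,k})$ and substituting the rewritten Stirling recurrence gives, for $n\geq 1$,
\begin{align*}
B^{(r+1)}_n=\sum_{k\geq1}k^r S_{n+1,k}-\sum_{k\geq1}k^r S_{n,k-1}.
\end{align*}
The first sum is exactly $B^{(r)}_{n+1}$. In the second sum I would reindex with $j=k-1$ (the boundary term $S_{n,0}=0$ for $n\geq1$ causes no trouble), expand $(j+1)^r=\sum_{\ell=0}^r\binom{r}{\ell}j^\ell$ by the binomial theorem, and interchange the order of summation to obtain $\sum_{\ell=0}^r\binom{r}{\ell}B^{(\ell)}_n$. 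This yields the clean recurrence
\begin{align*}
B^{(r+1)}_n=B^{(r)}_{n+1}-\sum_{\ell=0}^r\binom{r}{\ell}B^{(\ell)}_n,
\end{align*}
valid for all $n\geq1$.

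Next I would run the induction on $r$. The base case $r=0$ is just $B^{(0)}_n=\sum_k S_{n,k}=B_n$, matching $a_0^{(0)}=1$. Assuming the claimed expansion $B^{(\ell)}_n=\sum_{i=0}^\ell a_i^{(\ell)}B_{n+i}$ holds for all $\ell\leq r$ and all $n$, I would insert these expansions into the recurrence. The term $B^{(r)}_{n+1}=\sum_{i=0}^r a_i^{(r)}B_{n+i+1}$ shifts, after setting $j=i+1$, into $\sum_{j=1}^{r+1}a_{j-1}^{(r)}B_{n+j}$, while the correction term becomes $\sum_{i=0}^r\big(\sum_{\ell=i}^r\binom{r}{\ell}a_i^{(\ell)}\big)B_{n+i}$ after interchanging sums. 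Collecting the coefficient of $B_{n+i}$ for each $i$ from $0$ to $r+1$ then reproduces precisely $a_i^{(r+1)}=a_{i-1}^{(r)}-\sum_{\ell=i}^r\binom{r}{\ell}a_i^{(\ell)}$, where the extreme cases $i=0$ (only the correction term survives, and $a_{-1}^{(r)}=0$ by convention) and $i=r+1$ (only the shifted term survives, the sum being empty) are handled by the stated conventions.

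I expect the main obstacle to be purely clerical rather than conceptual: keeping the index shifts, the boundary conventions $S_{n,0}=0$ and $a_{-1}^{(r)}=0$, and the ranges of the double sums consistent, so that the coefficient matching at the two endpoints $i=0$ and $i=r+1$ comes out exactly as in the recursion for $a_i^{(r)}$. Once the key recurrence above is in hand, everything else is a matched bookkeeping of coefficients.
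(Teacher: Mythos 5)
Your proposal is correct and follows essentially the same route as the paper: both derive the key recurrence $B^{(r+1)}_n=B^{(r)}_{n+1}-\sum_{\ell=0}^r\binom{r}{\ell}B^{(\ell)}_n$ from the Stirling identity $kS_{n,k}=S_{n+1,k}-S_{n,k-1}$ and then observe that this recurrence is equivalent to the stated coefficient recursion. You merely spell out the coefficient-matching induction that the paper dismisses as "clearly equivalent."
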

Clearly, Lemma~\ref{lem:moments-stirling} is equivalent to the
relation $B^{(r+1)}_n=B^{(r)}_{n+1}-\sum_{\ell=0}^r {r\choose
\ell}B^{(\ell)}_n$,
which can be derived as follows:
\begin{align*}
 B^{(r+1)}_n=\sum_{k=1}^{n} k^r\, kS_{n,k}
            =B^{(r)}_{n+1}-\sum_{k=1}^{n} (k+1)^rS_{n,k}
             =B^{(r)}_{n+1}-\sum_{\ell=0}^r \sum_{k=1}^{n}  {r\choose \ell} k^{\ell}
             S_{n,k},
\end{align*}
where the second equality results from the identity
$kS_{n,k}=S_{n+1,k}-S_{n,k-1}$. The first values of the
$B_n^{(r)}:=\sum_{k=1}^{n} k^r S_{n,k}$ read
\begin{align}
\begin{split}\label{eq:momentsStirlings-first values}
&B^{(0)}_n=B_n,\quad B^{(1)}_n=B_{n+1}-B_{n},\quad B^{(2)}_n=B_{n+2}-2B_{n+1}\\
&B^{(3)}_n=B_{n+3}-3B_{n+2}+B_n,\quad
B^{(4)}_n=B_{n+4}-4B_{n+3}+4B_{n+1}+B_n.
\end{split}
\end{align}
 If we insert the expression of $\E\big(X_{n,k}(X_{n,k}-1)\big)$ given
in~Proposition~\ref{prop:2nd derivative} (recall that
$\E\big(X_{n,k}(X_{n,k}-1)\big)=T_{n,k}''(1)/S_{n,k}$) in \eqref{eq:ConditionalExpectation^2_crol},
and then use~\eqref{eq:momentsStirlings-first values} to simplify the resulting sum,
we painlessly arrive at~\eqref{eq:moments2-crol}.
\qed

\paragraph{\textit{Proof of~\eqref{eq:Asymptotic_QuotientBell2}}}
All asymptotic in what follows are meant for $n\to\infty$. Our
demonstration relies on~\eqref{eq:Asymptotic_QuotientBell1} and the
approximation found by Salvy and Schakell (see Section~3.3
in~\cite{Sal})
\begin{align}
 &\frac{B_{n+u+2}}{B_{n+u}}
-\left(\frac{B_{n+u+1}}{B_{n+u}}\right)^{2}= \frac{n}{\left(\log
n\right)^{2}}\left(1+2\frac{\log\log n}{\log n} (1+o(1))
\right),\label{eq:Asymptotic_Bell_lem2}
\end{align}
valid for any fixed integer $u$. Note that if we multiply both sides
of Equation~\ref{eq:Asymptotic_Bell_lem2} by $B_{n+u}/B_{n+u+1}$,
then use the specialization of~\eqref{eq:Asymptotic_QuotientBell1}
at $t=-1$ in the right-hand side of the resulting equation, we get
the approximation
\begin{align}\label{eq:Asymptotic_QuotientBell_shift3}
\frac{B_{n+u+2}}{B_{n+u+1}} -\frac{B_{n+u+1}}{B_{n+u}}=
\frac{1}{\log n}\left(1+\frac{\log\log n}{\log n} (1+o(1)) \right).
\end{align}

 Let $s$ and $t$ be two nonnegative integers. It is easily
checked that
\begin{align}\label{eq:asymptotic-telescoping}
\frac{B_{n+s+t}}{B_{n+s}} -\frac{B_{n+t}}{B_{n}}
&=\sum_{i=0}^{s-1}\sum_{\ell=0}^{t-1}
\frac{B_{n+i+t}}{B_{n+i+1}}\left(\frac{B_{n+i+\ell+2}}{B_{n+i+\ell+1}}
-\frac{B_{n+i+\ell+1}}{B_{n+i+\ell}}\right).
\end{align}
Combining~\eqref{eq:Asymptotic_QuotientBell1}
and~\eqref{eq:Asymptotic_QuotientBell_shift3} shows that each of the
$st$ summands in~\eqref{eq:asymptotic-telescoping} is asymptotically
equal to $\frac{n^{t-1}}{\left(\log n\right)^{t}}\left
(1+t\frac{\log\log n}{\log n} (1+o(1)) \right)$, whence~\eqref{eq:Asymptotic_QuotientBell2}.
 \qed

\section{Limiting  distribution of $Y_{n,k}$}

The purpose of this section is to demonstrate
Theorems~\ref{thm:LimitDistcroc-ktoinfty} and~\ref{thm:MomentsDistcroc}. As we mentioned in the
introduction, our proof mainly relies on results about the
distribution of $X_{n,k}$ expounded in the introduction  and the
following result which quantifies the combinatorial ``closeness'' of
the parameters~$\croc$ and~$\crol$.
 \begin{lem}\label{lem:croc-crol}
For any set partition $\pi\in\Pi_n^k$, we have
$$
\crol(\pi)\leq \croc(\pi)\leq \crol(\pi)+2k(k-1).
$$
\end{lem}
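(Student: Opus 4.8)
The plan is to compare the two crossing statistics by analyzing what happens when we transform the circular representation of $\pi\in\Pi_n^k$ into its linear representation. Recall that the circular representation draws the points $1,2,\ldots,n$ on a circle and joins consecutive elements of each block by a chord, whereas the linear representation places the points on a line and uses only right-oriented arcs. The lower bound $\crol(\pi)\leq\croc(\pi)$ should be the easy direction: every arc crossing in the linear representation ``survives'' as a chord crossing on the circle, since cutting the circle at a suitable point (just before the point labeled $1$, say) and straightening it out cannot destroy a crossing between two arcs that both live in the upper half-plane. So first I would verify that each pair of arcs crossing on the line corresponds to a pair of chords crossing on the circle, giving $\crol(\pi)\leq\croc(\pi)$ directly.

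For the upper bound, the idea is to count precisely the crossings that are created or destroyed by the cutting operation. When we cut the circle at the base point to form the line, the chords split into two types: those that, after cutting, become ordinary right-going arcs (and whose mutual crossings are then exactly the linear arc crossings), and those ``wrap-around'' chords of each block that connected its largest element back toward its smallest on the circle but which do not correspond to a left-to-right arc on the line. The plan is to isolate the extra crossings on the circle as precisely those involving at least one wrap-around chord. Since $\pi$ has $k$ blocks, there are at most $k$ such wrap-around chords (one per block, possibly fewer), and I would bound the number of crossings involving this distinguished set of chords.

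The main step is therefore a counting estimate: each wrap-around chord can cross at most some bounded number of other chords, and I would show this bound is $O(k)$ per wrap-around chord, so that the total discrepancy is $O(k^2)$, matching the claimed $2k(k-1)$. Concretely, a single chord in a diagram with $k$ blocks meets at most $2(k-1)$ other chords in a way that contributes a genuinely new crossing (roughly, two per other block), and summing over the at most $k$ wrap-around chords gives the bound $\croc(\pi)-\crol(\pi)\leq 2k(k-1)$. I would make the ``two per block'' estimate rigorous by observing that a fixed chord partitions the remaining points into two circular arcs, and each of the other $k-1$ blocks can contribute only a controlled number of chord-endpoints straddling our fixed chord.

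The hard part will be making the bookkeeping of ``which crossings are genuinely new'' fully precise, since one must argue carefully that crossings \emph{not} involving a wrap-around chord are in bijection with the linear arc crossings (so they cancel exactly), while crossings involving a wrap-around chord are the only source of the surplus, and then that the surplus is at most $2k(k-1)$. I expect the cleanest route is to fix an explicit description of the at most $k$ wrap-around chords, bound the crossings each one participates in by $2(k-1)$ via the straddling-endpoint argument, and invoke a union-type count over the $k$ blocks; the $2k(k-1)$ figure then falls out, and the inequality $\crol(\pi)\leq\croc(\pi)\leq\crol(\pi)+2k(k-1)$ follows.
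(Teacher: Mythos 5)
Your proposal is correct and follows essentially the same route as the paper: the lower bound by noting that linear arc crossings persist as chord crossings, and the upper bound by isolating, for each block $B_i$, the single extra ``closing'' chord joining $\max(B_i)$ to $\min(B_i)$ and bounding its crossings by $2(k-1)$ (at most two per other block), then summing over the $k$ blocks. The ``wrap-around'' chords you describe are exactly the chords $e_i$ used in the paper's argument, so no further comparison is needed.
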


\pf Let $\pi=\{B_1,B_2,\ldots,B_k\}$ be a partition in $\Pi_n^k$.
Clearly, if two arcs in the linear representation of $\pi$ cross,
then the corresponding chords in the circular representation of
$\pi$ cross. This proves that ${\croc(\pi)\geq \crol(\pi)}$.

For $i=1,2,\ldots,k$, let~$e_i$ be the chord in the circular
representation  of $\pi$ that joins $\min(B_i)$ and $\max(B_i)$ and
denote by $c(e_i)$ the number of chords in the  circular
representation of $\pi$ which cross with $e_i$. Then, it is easy to
check (see e.g. Figure~\ref{fig:representations}) that we have
$$\croc(\pi)\leq \crol(\pi)+\sum_{i=1}^k c(e_i).$$ To conclude the
proof, it suffices to show that $c(e_i)\leq 2(k-1)$ for
$i=1,\ldots,k$. This is due to the fact that the chord $e_{i}$ can
cross with at most two chords coming from the block $B_j$ for any
integer $j\neq i$ (see e.g. Figure~\ref{fig:representations}).
 \qed

\subsection{Limiting  distribution of $Y_{n,k}$} In this section,
\emph{all asymptotic are meant  for ${k=o(\sqrt{n})}$ and
$n\to\infty$  unless otherwise stated}. Note that
Lemma~\ref{lem:croc-crol} asserts that
\begin{align}\label{eq:croc-crol}
0\leq Y_{n,k}-X_{n,k}\leq 2k(k-1), \qquad\qquad (n\geq k\geq 1),
\end{align}
whence $\Var(Y_{n,k}-X_{n,k})=O\left(k^4\right)$. This, combined
with the approximation~\eqref{eq:asymptotic Var(Xnk)} of
$\Var(X_{n,k})$ and the well-known Cauchy-Schwartz  inequality
involving the covariance of two random variables~$U$ and~$V$
\begin{align}\label{eq:cov}
 |\Cov(U,V)|\leq \sqrt{\Var(U)\Var(V)},
\end{align}
leads, after a routine computation, to
\begin{align}\label{eq:varXnk-varYnk}
 \Var(Y_{n,k})
&=\Var(X_{n,k})+\Var(Y_{n,k}-X_{n,k})+2\, \Cov(X_{n,k}\,,Y_{n,k}-X_{n,k})\nonumber\\
&=\Var(X_{n,k})+ O\left({k}^3\,\sqrt{n}\right) =\frac{{k}^2-1}{12}\,
n +O\left({k}^3\,\sqrt{n}\right),
\end{align}
as stated in the first part of
Theorem~\ref{thm:LimitDistcroc-ktoinfty}.
 We now turn our attention to the second part of Theorem~\ref{thm:LimitDistcroc-ktoinfty}.
First note~that
\begin{align}\label{eq:decompo_croc}
\frac{Y_{n,k}-\E(Y_{n,k})}{\sqrt{\Var(Y_{n,k})}}
&=B_{n,k}\,\frac{X_{n,k}-\E(X_{n,k})}{\sqrt{\Var(X_{n,k})}}
+C_{n,k},
\end{align}
where $B_{n,k}$ and $C_{n,k}$ are the random variables defined on
$\Pi_{n,k}$  by
\begin{align*}
B_{n,k}=\sqrt{\frac{\Var(X_{n,k})}{\Var(Y_{n,k})}}\quad\text{and}\quad
C_{n,k}=\frac{\big(Y_{n,k}-X_{n,k}\big)-\E(Y_{n,k}-X_{n,k})}{\sqrt{\Var(Y_{n,k})}}.
\end{align*}
Since $B_{n,k}\overset{p}{\longrightarrow} 1$ and
$C_{n,k}\overset{p}{\longrightarrow}0$ (by~\eqref{eq:varXnk-varYnk}
and~\eqref{eq:croc-crol}) and $X_{n,k}$ is asymptotically Gaussian
(by Theorem~\ref{thm:LimitDistcrol-ktoinfty}), the second part of
Theorem~\ref{thm:LimitDistcroc-ktoinfty} is an immediate consequence
of~\eqref{eq:decompo_croc} and the following basic result in
probability theory:\newline
\noindent 
\emph{If $(A_n)_{n\geq 1}$, $(B_n)_{n\geq 1}$ and
$(C_n)_{n\geq 1}$ are sequences of random variables such that
${A_n\overset{d}{\longrightarrow}A}$,
$B_n\overset{p}{\longrightarrow}b$ and
$B_n\overset{p}{\longrightarrow}c$, where $b$ and $c$ are constant,
then $A_n B_n+ C_n\overset{d}{\longrightarrow} bA+c$.}

\subsection{An upper bound for the variance of $Y_{n}$: proof of Theorem~\ref{thm:MomentsDistcroc}}
The same reasoning as in the proof of~\eqref{eq:varXnk-varYnk} shows
that
\begin{align}\label{eq:majoration-VarYn}
 \Var(Y_{n})&\leq \Var(X_{n})+\Var(Y_{n}-X_{n})+2\sqrt{ \Var( X_{n})\Var(Y_{n}-X_{n})}.
\end{align}
Combining the above inequality with
Theorem~\ref{thm:MomentsDistcrol}, we see that
Theorem~\ref{thm:MomentsDistcroc} is an immediate corollary of the
following property:
\begin{equation}
\Var(Y_{n}-X_{n})=O\left(\frac{n^{4}}{\left(\log
n\right)^{4}}\right)\quad\text{ as
$n\to\infty$.}\label{eq:varcroc_lem}
\end{equation}
To prove~\eqref{eq:varcroc_lem}, first observe that, by the law of
total expectation, we have
\begin{align}\label{eq:ConditionalExpectation^2_croc-crol}
\Var(Y_{n}-X_{n})&\leq  \E\big(\left(Y_n-X_n\right)^2\big)
=\frac{1}{B_n}\sum_{k=1}^n\,S_{n,k}\,\E\big((Y_{n,k}-X_{n,k})^2\big)\nonumber\\
&\leq \frac{1}{B_n}\sum_{k=1}^n S_{n,k}\,2 k
(k-1)\,\E\big(Y_{n,k}-X_{n,k}\big),
\end{align}
where the last inequality is a consequence of~\eqref{eq:croc-crol}.
Using expressions~\eqref{eq:meanblock-crol}
and~\eqref{eq:meanblock-croc} for~$\E(X_{n,k})$ and $\E(Y_{n,k})$,
after a routine computation, we get
\begin{align*}
\E(Y_{n,k}-X_{n,k})&=\frac{5}{4}k(k-1)-\frac{3}{2}(n+1-k)\frac{S_{n,k-1}}{S_{n,k}}
 -\frac{1}{2}n(4n-5k+1)\frac{S_{n-1,k-1}}{S_{n,k}}\\
&\quad-10{n\choose 2}\frac{S_{n-2,k-2}}{S_{n,k}}+{n\choose 4}\frac{S_{n-4,k-2}}{S_{n,k}}\\
&\leq \frac{5}{4}k(k-1) +\frac{5}{2}n
k\frac{S_{n-1,k-1}}{S_{n,k}}+{n\choose
4}\frac{S_{n-4,k-2}}{S_{n,k}}.
\end{align*}
Inserting the latter relation
in~\eqref{eq:ConditionalExpectation^2_croc-crol} gives, after some
manipulations,
\begin{align}\label{eq:majorationExpectation^2_croc-crol}
 \Var(Y_n-X_n)
&\leq \frac{1}{B_n} \bigg(\frac{5}{2}B^{(4)}_{n}+ 5 n
B^{(3)}_{n-1}+2{n\choose 4}B^{(2)}_{n-4}\bigg),
\end{align}
where, as in Lemma~\ref{lem:moments-stirling},
$B^{(r)}_{n}=\sum_{k=1}^r k^r S_{n,k}$. Combining the above
inequality with the relations in~\eqref{eq:momentsStirlings-first
values} and the approximation~\eqref{eq:Asymptotic_QuotientBell1},
we finally arrive at
\begin{align*}
 \Var(Y_n-X_n)=O\left(\frac{B_{n+4}}{B_n}\right)
              =O\left(\frac{n^{4}}{\left(\log n\right)^{4}}\right),
\end{align*}
as stated in~\eqref{eq:varcroc_lem}. This concludes the proof of
Theorem~\ref{thm:MomentsDistcroc}.

\section{Maximum values of the parameters $\crol$ and $\croc$}

 This section contains the proof of Theorems~\ref{thm:max-cro-bl} and~\ref{thm:max-cro}. 
 It seems difficult (in general) to determine $M_{n,k}^{(\ell)}$ and  $M_{n,k}^{(c)}$ directly 
 from their combinatorial definition. The key idea is to convert our original problem
 (find global maxima of functions defined on set partitions)  to a maximization problem
 of functions defined on integer partitions (these partitions are often easier to handle than set partitions).

 Recall that a  partition of a positive integer $n$
 is a finite nonincreasing sequence of positive integers $\la
=(\la_1,\la_2,\ldots,\la_k)$ such that $\la_1+\la_2+\cdots+\la_k=n$.
The $\la_i$ are called the parts of the partition. We often write
$\la=(1^{m_1}2^{m_2}3^{m_3}\cdots)$ where exactly $m_i$ of the
$\la_j$ are equal to $i$. It is usual to associate with a set
partition $\pi=B_1/B_2/\ldots/B_k$ its block-size vector $\bl(\pi)$,
which is the integer partition whose parts are $|B_1|$, $|B_2|$,
\ldots, $|B_k|$. For instance, if $\pi=1\,7/2\,3\,8/4/5\,6$, we have
$\bl(\pi)=(3,2,2,1)=(1\,2^2\,3)$. In the sequel, we let $\P(n,k)$
denote the set of all (integer) partitions of $n$ into exactly $k$
parts.

 For an integer partition $\la$ of $n$, set
\begin{align}\label{eq:Def-Ml and Mc}
M^{(\ell)}(\la)= \displaystyle \max_{\pi\in
\Pi_{n}:\,\bl(\pi)=\la}\crol(\pi) \quad\text{and}\quad M^{(c)}(\la)=
\displaystyle \max_{\pi\in \Pi_{n}:\,\bl(\pi)=\la}\croc(\pi).
\end{align}
 In Section~\ref{sec:Def-Ml and Mc}, we prove the following result.
\begin{thm}\label{thm:def-M}
If $\la=(1^{m_1}2^{m_2}3^{m_3}\cdots r^{m_r})$, we have
\begin{align}
 M^{(\ell)}(\la)&=\sum_{s=2}^{r} (2s-3)  \binom{m_s}{2}+\sum_{1\leq s< t\leq r} 2(s-1)m_sm_{t},\label{eq:def-weightMl}\\
  M^{(c)}(\la)&=\binom{m_2}{2}+\sum_{s=3}^{r} 2s  \binom{m_s}{2}+ 2 m_2 \sum_{t=3}^{r}m_{t}+\sum_{3\leq s< t\leq r}
  2sm_sm_{t}.
  \label{eq:def-weightMc}
\end{align}
\end{thm}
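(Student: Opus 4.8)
The plan is to reduce the global optimization to a sum of independent pairwise contributions, to compute each pairwise maximum exactly, and then to exhibit a single configuration that attains all of them simultaneously. The first observation is that neither representation has intra-block crossings: in the linear picture the arcs of one block form a path $p_1<p_2<\cdots<p_s$ whose arcs $(p_i,p_{i+1})$ span disjoint or merely adjacent intervals and so never cross, while in the circular picture the $s$ points of a block are in convex position, so the unique simple polygon on them is convex and its edges are pairwise non-crossing. Hence, for $\pi$ with blocks $B_1,\dots,B_m$,
\begin{align*}
\crol(\pi)=\sum_{i<j}N^{(\ell)}_{ij},\qquad \croc(\pi)=\sum_{i<j}N^{(c)}_{ij},
\end{align*}
where $N^{(\ell)}_{ij}$ (resp.\ $N^{(c)}_{ij}$) is the number of crossings between the arcs (resp.\ chords) of $B_i$ and $B_j$ and depends only on the relative order of $B_i\cup B_j$. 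Since the maximum of a sum is at most the sum of the maxima, writing $c_\ell(s,t)$ and $c_c(s,t)$ for the largest number of inter-block crossings between a size-$s$ and a size-$t$ block gives at once $M^{(\ell)}(\la)\le\sum_{i<j}c_\ell(s_i,s_j)$ and likewise for $M^{(c)}$.

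Next I would pin down the pairwise maxima. The crucial local fact is that a single arc meets at most two arcs of any other path, and a single chord meets at most two edges of any other convex polygon: in the circular case this is simply that a segment crosses the boundary of a convex region at most twice, and in the linear case it follows because the points of the other block lying under a given arc form a contiguous range of indices, so the ``in/out'' status changes at most twice along that path. Summing over the $s-1$ arcs (resp.\ the $e(s)$ chords, $e(2)=1$, $e(s)=s$ for $s\ge3$) of the smaller block yields $c_\ell(s,t)\le 2(\min(s,t)-1)$ and $c_c(s,t)\le 2\min(e(s),e(t))$, together with the obvious product bounds. For unequal sizes these are attained, but in the linear equal-size case there is a genuine loss of one crossing: forcing each of the $s-1$ arcs of one size-$s$ path to cross two arcs of its partner would require each of the $s+1$ regions cut out by the first path (the $s-1$ internal gaps plus the two external ones) to contain a partner point, which is impossible with only $s$ points. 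This pigeonhole argument yields $c_\ell(s,s)\le 2s-3$. The analogous obstruction vanishes on the circle, where the two external regions merge into one, which is precisely why $c_c(s,s)=2s$ carries no penalty for $s\ge3$. A short bookkeeping, regrouping $\sum_{i<j}c_\ell(s_i,s_j)$ and $\sum_{i<j}c_c(s_i,s_j)$ according to the multiplicities $m_s$, then converts these pairwise values into the right-hand sides of \eqref{eq:def-weightMl} and \eqref{eq:def-weightMc}.

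For the matching lower bound I would exhibit a single partition attaining every pairwise maximum at once, namely the \emph{round-robin} arrangement: order the blocks by non-increasing size and place points in successive rounds, each round depositing, in this fixed order, one point of every block that still has unused elements. The decisive point is that deleting all other blocks leaves the two survivors in the two-block round-robin pattern $(B_iB_j)^{\min(s_i,s_j)}(B_{\mathrm{larger}})^{|s_i-s_j|}$, and a direct count shows this pattern realizes $c_\ell$ (resp.\ $c_c$); in particular the equal-size linear pattern $(B_iB_j)^s$ produces exactly $1+2(s-2)=2s-3$ crossings, matching the bound above. Thus every pair contributes its maximum and the upper bounds are met with equality, giving the theorem.

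The main obstacle is the sharp pairwise analysis of the second step, especially isolating the one-crossing deficit of the linear equal-size case and verifying that it is absent in the circular case; once those exact values $c_\ell(s,t)$ and $c_c(s,t)$ are in hand, the simultaneous realizability in the last step follows cheaply from the ``restriction to two blocks'' principle.
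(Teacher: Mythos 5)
Your proposal is correct and takes essentially the same route as the paper: the pairwise decomposition of crossings (the paper's ``$Z$-parameter'' identity), the exact two-block maxima $M^{(u)}(a,b)$ including the $-1$ penalty in the equal-size linear case, and the round-robin construction, which is exactly the paper's partition $\pi(\la)$ obtained by filling the Ferrers diagram column by column. The only difference is that you supply explicit convexity/pigeonhole arguments for the two-block maxima where the paper appeals to figures and to computations in~\cite{KaMean}.
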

For every integers $n\geq k\geq 1$ and $u\in\{\ell,c\}$, let $\M_{n,k}^{(u)}$ be the set of the maximas
of $M^{(u)}(\la)$, where~$\la$ runs
over all partitions of $n$ into $k$ blocks, i.e.
\begin{align}\label{eq:def-Maximal set}
\M_{n,k}^{(u)}&=\{\la\in\P(n,k):\,M^{(u)}(\la)=\max(M^{(u)}(\tau):\,\tau\in
\P(n,k)).
\end{align}
Note that, by \eqref{eq:Def-Ml and Mc} and \eqref{eq:def-Maximal
set}, we have
\begin{align}\label{eq:MvsMset}
M_{n,k}^{(u)}=M_{n,k}^{(u)}(\la)\quad\text{for $u\in\{c,\ell\}$ and
every $\la\in\M_{n,k}^{(u)}$}.
\end{align}
In the sequel, we let $\la_{n,k}^{*}$ denote the (unique) partition
$(\la_1,\la_2,\ldots,\la_k)$ of $n$ such that $\left\lfloor
\frac{n}{k}\right\rfloor\leq \la_i\leq \left\lceil
\frac{n}{k}\right\rceil$ for $i=1,\ldots,k$.  For instance, we have
$\la_{7,3}^{*}=(2^2 3^1)=(3,2,2)$. The following result is proved in
Section~\ref{sec:Max-Ml and Mc}.

\begin{thm}\label{thm:max-M}
Suppose $n\geq k\geq 1$. Then,
\begin{enumerate}
 \item $\M_{n,k}^{(\ell)}=\{\la_{n,k}^*\}$.
 \item \begin{enumerate}
       \item  $\M_{n,k}^{(c)}=\{\la_{n,k}^*\}$ if $n\geq 3k$,
       \item  $\M_{3k-j,k}^{(c)}=\{\left(1^{s}2^{j-2s}3^{k-j+s}\right):\;s=\lfloor\tfrac{j}{2}\rfloor\}$
        if $0\leq j<k$ or $k\leq j\leq 2k-6$,
       \item  $\M_{k+j,k}^{(c)}=\{(1^{k-j}2^{j}),(1^{k-j+2}2^{j-4}3^{2})\}$ if $4\leq j\leq 5$ and $k\geq j$,
       \item  $\M_{k+j,k}^{(c)}=\{(1^{k-j}2^{j})\}$ if $0\leq j\leq 3$ and $k\geq j$.
       \end{enumerate}
\end{enumerate}
\end{thm}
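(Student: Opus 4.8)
The plan is to treat both $M^{(\ell)}$ and $M^{(c)}$ as symmetric functions of the parts and to optimize them by analysing the elementary \emph{balancing moves} that preserve both $n$ and the number of parts $k$, namely the unit transfers $(a,c)\mapsto(a+1,c-1)$ with $a\le c-2$ (i.e. any two parts differing by at least $2$); these moves generate all of $\P(n,k)$. The first step is to rewrite the weights of Theorem~\ref{thm:def-M} as sums over unordered pairs of parts. Collecting the contributions in~\eqref{eq:def-weightMl} gives $M^{(\ell)}(\la)=\sum_{i<j}w_\ell(\la_i,\la_j)$ with $w_\ell(a,b)=2\min(a,b)-2-[a=b\ge 2]$, and~\eqref{eq:def-weightMc} gives $M^{(c)}(\la)=\sum_{i<j}w_c(\la_i,\la_j)$ where $w_c(a,b)=2\min(a,b)$ when $\min(a,b)\ge 3$, $w_c=0$ when $\min(a,b)=1$, and $w_c(2,2)=1$, $w_c(2,b)=2$ for $b\ge 3$.

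For part (1) I would show that for each fixed $d$ the single-variable function $x\mapsto w_\ell(x,d)$ is concave on the positive integers (its forward differences read $2,2,\dots,2,1,0,0,\dots$). Writing the change of $M^{(\ell)}$ under a transfer $(a,c)\mapsto(a+1,c-1)$ as a direct contribution $w_\ell(a+1,c-1)-w_\ell(a,c)$ plus crossed contributions $\sum_{d}\bigl(\delta_d(a)-\delta_d(c-1)\bigr)$ over the remaining parts $d$ (where $\delta_d$ is the forward difference of $w_\ell(\cdot,d)$), concavity forces every crossed term to be $\ge 0$, while a short check gives direct term $\ge 1$. Hence every balancing move strictly increases $M^{(\ell)}$. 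Since any partition other than $\la^*_{n,k}$ has two parts differing by at least $2$ and so admits such a move, $\la^*_{n,k}$ is the unique maximizer, which is assertion (1).

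The same bookkeeping drives part (2), but now $x\mapsto w_c(x,d)$ is concave for every $d$ \emph{except} for the single upward jump of its forward difference at the step $2\to 3$ when $d\ge 3$ (the differences read $2,4,2,\dots$). Tracking where this defect can turn a crossed term negative, I would prove that the only balancing move that can fail to increase $M^{(c)}$ is $(1,3)\mapsto(2,2)$, and that it strictly decreases $M^{(c)}$ precisely when a further part of size $\ge 3$ is present. Consequently every maximizer of $M^{(c)}$ has the property that any two of its parts differing by at least $2$ form a $\{1,3\}$ pair, which restricts the set of distinct part sizes to one of $\{p\},\{p,p+1\},\{1,3\},\{1,2,3\}$. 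For $n\ge 3k$ only a support $\{p,p+1\}$ with $p\ge 3$ can reach the sum $n$, yielding the unique maximizer $\la^*_{n,k}$ and assertion (2)(a); for $n<3k$ every maximizer has all parts in $\{1,2,3\}$.

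It then remains to optimize $M^{(c)}$ over partitions with parts in $\{1,2,3\}$. Setting $e=n-k=m_2+2m_3$ and using $s:=m_3$ as the single free parameter (so $m_2=e-2s$, $m_1=k-e+s$, with $\max(0,e-k)\le s\le\lfloor e/2\rfloor$), substitution into~\eqref{eq:def-weightMc} collapses to the clean quadratic $M^{(c)}=\binom{e}{2}-1+(s-1)^2$. As this is an upward parabola in $s$ with vertex at $s=1$, its maximum over the admissible interval is attained at whichever endpoint is farther from $1$, with a tie exactly when the two endpoints are equidistant from $1$. Comparing $s_{\max}=\lfloor e/2\rfloor$ with $s_{\min}=\max(0,e-k)$ reproduces the remaining cases: the unique optimum $s=\lfloor e/2\rfloor$ for $e\ge 6$ (case (b)), the tie between $s=0$ and $s=2$ for $e\in\{4,5\}$ with $k\ge e$ (case (c)), and the unique optimum $s=0$ for $e\le 3$ (case (d)); translating $(m_1,m_2,m_3)$ back into the $(1^{\cdots}2^{\cdots}3^{\cdots})$ notation gives the stated maximizers. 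The main obstacle is precisely the non-concavity of $w_c$ at the $2\to 3$ step: it rules out a one-line Schur-concavity argument, forces the delicate analysis of the single defect transfer $(1,3)\mapsto(2,2)$, and produces the boundary phenomena (including the genuine tie in case (c)) that make the case distinctions unavoidable. Careful handling of the endpoint comparisons and of the admissibility constraints $m_i\ge 0$ is where most of the effort lies.
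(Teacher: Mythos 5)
Your proposal is correct and follows the same overall strategy as the paper: reduce to the pairwise weights of Theorem~\ref{thm:def-M}, show that a unit transfer between two parts differing by at least $2$ strictly increases the objective except for the single defective move $(1,3)\mapsto(2,2)$ for $M^{(c)}$, conclude that maximizers are either balanced or supported on $\{1,2,3\}$, and finish by explicit optimization. The differences are in the bookkeeping, and both are improvements in transparency. Where the paper proves the key monotonicity (its Lemma~\ref{lem:max1} and Proposition~\ref{prop:notmaxima}) by splitting $M^{(u)}=R-T^{(u)}$ and computing the change of each piece in terms of the multiplicities $m_j$, you obtain the same conclusion from concavity of the slices $x\mapsto w_\ell(x,d)$ and from locating the unique convexity defect of $x\mapsto w_c(x,d)$ at the step $2\to 3$; this makes it self-evident why $(1,3)\mapsto(2,2)$ is the only problematic move, something the paper's case list in~\eqref{eq:maxTc} delivers less conspicuously. (Minor slip: the forward differences of $w_\ell(\cdot,d)$ are $2,\dots,2,1,1,0,\dots$ with \emph{two} steps equal to $1$, coming from the $\chi(a=b)$ correction; the sequence is still nonincreasing, so your concavity claim and the bound ``direct term $\geq 1$'' survive unchanged.) For the endgame on $\{1,2,3\}$-partitions, the paper compares $M^{(c)}(1^{a}2^{b}3^{d})$ across a few hand-picked exchanges, whereas your substitution $m_3=s$, $m_2=e-2s$, $m_1=k-e+s$ collapses the objective to the single quadratic $\binom{e}{2}-1+(s-1)^2$ on the interval $\left[\max(0,e-k),\lfloor e/2\rfloor\right]$; this uniformly produces cases (2b)--(2d) and, in particular, explains the tie in (2c) as the two endpoints being equidistant from the vertex $s=1$, which is genuinely more illuminating than the paper's direct verification that $M^{(c)}(1^{k-j}2^{j})=M^{(c)}(1^{k-j+2}2^{j-4}3^{2})$.
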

Note that Equation~\eqref{eq:MvsMset}, in conjunction with the two above theorems, easily
leads to Theorem~\ref{thm:max-cro-bl}. For instance, if $n=qk+r$ with
$q=\lfloor\frac{n}{k}\rfloor$, we have $\la_{n,k}^*=\left(q^{k-r}(q+1 )^r\right)$.
If $q\geq 2$ (i.e., $n\geq 2k$), by \eqref{eq:MvsMset} and Theorems~\ref{thm:def-M} and~\ref{thm:max-M}, 
this implies that
\begin{align*}
M_{n,k}^{(\ell)}=M_{n,k}^{(\ell)}(\la_{n,k}^*)=(2q-3)\binom{k-r}{2}+(2q-1)\binom{r}{2}+
2(q-1)r(k-r).
\end{align*}
Using the relation $\binom{k-r}{2}+\binom{r}{2}+r(k-r)=\binom{k}{2}$
and replacing $q$ by $(n-r)/k$ in the above equality, we arrive at
Theorem~\ref{thm:max-cro-bl}(1a). 
The proof of the other assertions in Theorem~\ref{thm:max-cro-bl} are so similar that we leave the details to the reader.

In Section~\ref{sec:Max-Mln and Mcn}, we  deduce Theorem~\ref{thm:max-cro} from Theorem~\ref{thm:max-cro-bl}.
We conclude this section with remarks on the maximas of the parameter $\crol$ and $\croc$ 
in Section~\ref{sec:Max-comments}.

\subsection{Proof of Theorem~\ref{thm:def-M}}\label{sec:Def-Ml and Mc}

Using only~\eqref{eq:Def-Ml and Mc} and the combinatorial definition
of~$\crol$ and $\croc$, it is easy to compute $M^{(\ell)}(\la)$ and
$M^{(c)}(\la)$ for integer partitions $\la$ into two parts.
\fig2part
A look at Fig.~\ref{fig:2partition} and a little moment's thought
(we refer the reader to Sections~5 and 6 in~\cite{KaMean} for more details)
will convince the reader that
 \begin{align}
 \begin{split}\label{eq:cro-2partition}
  M^{(\ell)}(a,1)&=M^{(c)}(a,1)=0\;\;\text{if $a\geq 1$}, \\
  M^{(\ell)}(a,2)&=M^{(c)}(a,2)=2 \;\;\text{if $a\geq 3$},\quad M^{(\ell)}(2,2)=M^{(c)}(2,2)=1,\\
  M^{(\ell)}(a,b)&= 2(b-1)-\chi(a=b)\;\;\text{and}\;\;M^{(c)}(a,b)=2 b\;\;\text{if $a\geq b\geq 3$}.
  \end{split}
\end{align}
To go further, we will rely on the obvious fact that $\crol$ and
$\croc$ are \emph{$Z$-parameters} (see~\cite{KaMean}), i.e., for any
set partition $\pi=B_1/B_2/\cdots/B_k$,  we have
 \begin{align}\label{eq:cro-Zproperty}
 \crol(\pi)=\sum\crol \big(\st(B_i/B_j)\big)
 \quad\text{and}\quad
 \croc(\pi)=\sum\croc \big(\st(B_i/B_j)\big),
\end{align}
where the summations are over all pairs $(i,j)$ with $1\leq i<j\leq
k$ and $\st$ is the standardization map defined as follows. Recall that, given a
subset $S\subseteq \mathbb{P}$ with cardinality $|S|=n$, the
standardization map $\st$ is the (unique) order-preserving bijection
$\st: S \to [n]$.  We let $\st$ act element-wise on objects built
using $S$ as label. For instance, the set partition
$\pi=2\,9/4\,10/5/7\,11/8$ of~$S=\{2,4,5,7,8,9,10,11\}$ is sent
after standardization to the set partition
$\st(\pi)=1\,6/2\,7/3/4\,8/5$.

As it is easily seen (we omit the details), Theorem~\ref{thm:def-M}
is immediate from~\eqref{eq:cro-2partition} and the following lemma.
\begin{lem}\label{lem:M-Zproperty}
For every integer partition $\la=(\la_1,\la_2,\ldots,\la_k)$, we
have
 \begin{align}\label{eq:M-Zproperty}
 M^{(\ell)}(\la)=\sum_{1\leq i<j\leq k} M^{(\ell)}(\la_i,\la_j)
 \quad\text{and}\quad
 M^{(c)}(\la)=\sum_{1\leq i<j\leq k} M^{(c)}(\la_i,\la_j).
\end{align}
\end{lem}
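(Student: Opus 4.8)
The plan is to prove each identity by two opposite inequalities, treating the $\crol$-identity in detail since the $\croc$-identity is formally identical. Fix $\lambda=(\lambda_1,\ldots,\lambda_k)$, and for a set partition $\pi=B_1/\cdots/B_k$ with $\bl(\pi)=\lambda$ label the blocks so that $|B_i|=\lambda_i$. The upper bound is the easy half and uses only the $Z$-property \eqref{eq:cro-Zproperty}: for any such $\pi$, each standardized pair $\st(B_i/B_j)$ is a set partition into two blocks of sizes $\lambda_i,\lambda_j$ (standardization preserves block sizes), so by the definition \eqref{eq:Def-Ml and Mc} of $M^{(\ell)}$ we have $\crol(\st(B_i/B_j))\le M^{(\ell)}(\lambda_i,\lambda_j)$. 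Summing \eqref{eq:cro-Zproperty} over all pairs and then maximizing over $\pi$ gives $M^{(\ell)}(\lambda)\le\sum_{i<j}M^{(\ell)}(\lambda_i,\lambda_j)$.

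For the reverse inequality I would exhibit a single partition $\pi^{*}$ with $\bl(\pi^{*})=\lambda$ in which \emph{every} pair of blocks is simultaneously crossing-maximizing, i.e.\ $\crol(\st(B_i/B_j))=M^{(\ell)}(\lambda_i,\lambda_j)$ for all $i<j$; by \eqref{eq:cro-Zproperty} this forces $\crol(\pi^{*})=\sum_{i<j}M^{(\ell)}(\lambda_i,\lambda_j)$, hence the matching lower bound. The candidate is the \emph{round-robin} arrangement: order the blocks by non-increasing size and distribute the points $1<2<\cdots<n$ to blocks by repeatedly cycling through the blocks in that order, skipping any block already filled. The reason for the non-increasing order is that, restricting the arrangement to two blocks $B_i,B_j$ with $\lambda_i\ge\lambda_j$, the induced pattern is exactly $(B_iB_j)^{\lambda_j}B_i^{\,\lambda_i-\lambda_j}$ — the larger block leads each alternating pair and then forms a tail — and a direct count (the same one underlying \eqref{eq:cro-2partition}) shows this realizes $2(\lambda_j-1)-\chi(\lambda_i=\lambda_j)=M^{(\ell)}(\lambda_i,\lambda_j)$. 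Since crossings between two blocks depend only on the relative order of their elements, which $\st$ preserves, each pair attains its maximum.

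The hard part will be this lower bound: it is not a priori clear that the locally optimal two-block configurations can coexist in one global arrangement, and the fact that they can hinges delicately on the ``larger block first'' convention — swapping the two blocks inside a pair costs exactly one crossing and would destroy the equality. What remains is routine: verifying the boundary cases $\lambda_i\in\{1,2\}$ against \eqref{eq:cro-2partition} (a size-$1$ block carries no arc and contributes $0$), and rerunning the whole argument for $\croc$, where the same round-robin construction — each block now drawn as a closed polygon — should realize $M^{(c)}(\lambda_i,\lambda_j)=2\lambda_j$ for every pair; the circular case is in fact cleaner, since the absence of endpoints removes the $\chi$-correction. Combining the two inequalities yields both identities in Lemma~\ref{lem:M-Zproperty}.
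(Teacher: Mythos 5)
Your proposal is correct and follows essentially the same route as the paper: the upper bound via the $Z$-property \eqref{eq:cro-Zproperty}, and the lower bound via an explicit partition whose every pairwise restriction standardizes to the extremal two-block configuration $(B_iB_j)^{\lambda_j}B_i^{\lambda_i-\lambda_j}$ — your round-robin arrangement is exactly the paper's $\pi(\lambda)$ obtained by filling the Ferrers diagram column by column. The only caveat is your closing claim that $M^{(c)}(\lambda_i,\lambda_j)=2\lambda_j$ for \emph{every} pair, which by \eqref{eq:cro-2partition} holds only for $\lambda_j\geq 3$; this is harmless since you already flag the small-block boundary cases as needing separate verification.
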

 The proof of above result relies on
\eqref{eq:cro-Zproperty} and properties of certain set partitions
that we describe below. Note that it is immediate
from~\eqref{eq:cro-Zproperty} that $ M^{(u)}(\la)\leq \sum
M^{(u)}(\la_i,\la_j)$ for $u\in\{c,\ell\}$.

\textit{The set partitions $\pi(\la)$.} Suppose $\la=(\la_1,\la_2,
\ldots, \la_k)\in \P(n,k)$ and consider the Ferrers diagram $F$ of
$\la$ (which is an array of square cells having left-justified rows
with row $i$ containing $\la_i$ cells). Then, we put the integers
$1,2,\ldots,n$ in increasing order in the cells of $F$ "from top to
bottom and left to right", i.e., starting with the leftmost column,
filling its cells with the integers $1,2,\ldots,k$ from top to
bottom, then filling the next column with the integers
$k+1,k+2,\ldots,k+a_2$ where $a_2$ is the number of cells in the
second column of $F$ and working up to the right. Let $\pi(\la)$ be
the set partition of $[n]$ the blocks of which consist of the
elements in the same row of the filling of $F$.
As an example, the Ferrers diagram of $(4,2,1)$ and its corresponding filling are
\begin{center}
{\setlength{\unitlength}{1mm}
\begin{picture}(20,15)(0,0)
 \put(0,0){\line(1,0){5}} \put(0,5){\line(1,0){10}} \put(0,10){\line(1,0){20}}\put(0,15){\line(1,0){20}}
\put(0,15){\line(0,-1){15}}\put(5,15){\line(0,-1){15}}\put(10,15){\line(0,-1){10}}
\put(15,15){\line(0,-1){5}}\put(20,15){\line(0,-1){5}}
\end{picture}}
\hspace{1cm} {\setlength{\unitlength}{1mm}
\begin{picture}(20,15)(0,0)
 \put(0,0){\line(1,0){5}} \put(0,5){\line(1,0){10}} \put(0,10){\line(1,0){20}}\put(0,15){\line(1,0){20}}
\put(0,15){\line(0,-1){15}}\put(5,15){\line(0,-1){15}}\put(10,15){\line(0,-1){10}}
\put(15,15){\line(0,-1){5}}\put(20,15){\line(0,-1){5}}
\put(0,10){\makebox(6,4)[c]{\small 1}}
\put(0,5){\makebox(6,4)[c]{\small 2}}
\put(0,0){\makebox(6,4)[c]{\small 3}}
\put(5,10){\makebox(6,4)[c]{\small 4}}
\put(5,5){\makebox(6,4)[c]{\small 5}}
\put(10,10){\makebox(6,4)[c]{\small 6}}
\put(15,10){\makebox(6,4)[c]{\small 7}}
\end{picture}}
\end{center}
whence $\pi(4,2,1)=1\,4\,6\,7\,/\,2\,5\,/3$.

Suppose $\pi(\la)=B_1/B_2/\cdots/B_k$. Clearly, by definition of
$\pi(\la)$, we have $|B_i|=\la_i$ for $i=1,\ldots,k,$ and
$\st(B_i/B_j)=\pi(|B_i|,|B_j|)=\pi(\la_i,\la_j)$ for $1\leq i<j\leq
k$. Moreover, by construction of $\pi(a,b)$, for all integers $a\geq
b\geq 1$, we have
\begin{align*}
\pi(b,b)&=\{\{1,3,\ldots,2b-1\}\,,\,\{2,4,\ldots,2b\}\},\\
\pi(a,b)&=\{\{1,3,\ldots,2b-1,2b+1,2b+2,\ldots,a+b\}\,,\,\{2,4,\ldots,2b\}\}\quad\text{if
$a>b$}.
\end{align*}
For instance, $\pi(2,2)=1\,3/2\,4$ and $\pi(4,2)=1\,3\,5\,6/2\,4$.
\figpiab
  Using the above expressions
for $\pi(a,b)$, Fig.~\ref{fig:pi(a,b)}
and~\eqref{eq:cro-2partition}, it is easy to compute
$\crol(\pi(a,b))$ and $\croc(\pi(a,b))$ and check that
$\crou(\pi(a,b))=M^{(u)}(a,b)$ for $u\in\{c,\ell\}$ and all integers
$a\geq b\geq 1$ (details are left to the reader). To resume, we have
seen that
 $\bl(\pi(\la))=\la$ and, $\crou \left(\st(B_i/B_j)\right)=M^{(u)}(\la_i,\la_j)$ for $u\in\{c,\ell\}$
and $1\leq i<j\leq k$. By~\eqref{eq:cro-Zproperty}, this implies
that $ M^{(u)}(\la)\geq \sum M^{(u)}(\la_i,\la_j)$ for
$u\in\{c,\ell\}$. This ends the proof of Lemma~\ref{lem:M-Zproperty},
and thus completes the proof of Theorem~\ref{thm:def-M}.

\subsection{Proof of Theorem~\ref{thm:max-M}}\label{sec:Max-Ml and Mc}
For simplicity, we introduce auxiliary functions $R$, $T^{(\ell)}$
and $T^{(c)}$ defined for a partition
$\la=(\la_1,\la_2,\ldots,\la_k)=(1^{m_1}2^{m_2}3^{m_3}\cdots
r^{m_r})$ by
\begin{align}\label{eq:def-weight-RT}
\begin{split}
 R(\la)&:=\sum_{s=1}^{r} 2s \binom{m_s}{2}+\sum_{1\leq s< t\leq r}
 2sm_sm_{t}=\sum_{i=1}^k 2(i-1)\la_i,\\
 T^{(\ell)}(\la)&:=\sum_{s=2}^{r} \binom{m_s}{2}+2\binom{\ell(\la)}{2},\\
 T^{(c)}(\la)&:=2(m_1+m_2)\ell(\la)-2\binom{m_1+m_2+1}{2}+\binom{m_2}{2},
 \end{split}
\end{align}
where $\ell(\la)$ is the number of parts of $\la$. So, by
Theorem~\ref{thm:def-M}, we have
\begin{align}\label{eq:M vs R}
 M^{(\ell)}(\la)&=R(\la)-T^{(\ell)}(\la)
\quad\text{and}\quad M^{(c)}(\la)=R(\la)-T^{(c)}(\la).
\end{align}
The following result is the key ingredient in the proof of
Theorem~\ref{thm:max-M}.
\begin{lem}\label{lem:max1}
Let $\la=(\la_1,\la_2,\ldots,\la_k)=(1^{m_1}2^{m_2}\cdots r^{m_r})$
be a partition with two nonconsecutive parts $u,v$ with $u<v$ (i.e.,
$v-u\geq 2$ and $m_u m_v>0$). Let $\I_{u,v}^{+,-}(\la)$ be the
partition obtained from~$\la$ by decreasing the rightmost part of
$\la$ equal to $v$ by 1 and increasing the leftmost part of $\la$
equal to $u$ by 1. Then, if we set
$\widetilde{\la}=\I_{u,v}^{+,-}(\la)$, we have
\begin{align}
R(\tilde{\la})-R(\la)&=2\left(1+\sum_{u<t<v}m_t\right),\label{eq:maxR}\\
\begin{split}
T^{(\ell)}(\tilde{\la})-T^{(\ell)}(\la)&=2-m_u+m_{u+1}+m_{v-1}-m_{v}\\
                                   &\qquad+\chi(u+1=v-1)+(m_u-1)\chi(u=1),
\end{split}\label{eq:maxTl}\\
T^{(c)}(\tilde{\la})-T^{(c)}(\la)&=\left\{
                                       \begin{array}{ll}
                                         2(\ell(\la)-m_1)-1,      & \hbox{\text{if $u=1$, $v=3$};} \\
                                         m_2,                         & \hbox{\text{if $u=1$, $v\geq 4$};} \\
                                         -2(\ell(\la)-m_1)+m_2+1, & \hbox{\text{if $u=2$};}\\
                                         0,                           & \hbox{\text{if $u\geq 3$},}
                                       \end{array}
                                     \right.\label{eq:maxTc}
\end{align}
\end{lem}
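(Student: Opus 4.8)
The plan is to read off the effect of $\I_{u,v}^{+,-}$ on the multiplicity vector of $\la$ and then substitute into the defining formulas~\eqref{eq:def-weight-RT}. Writing $\tilde{\la}=\I_{u,v}^{+,-}(\la)$, the operation replaces one part of size $v$ by a part of size $v-1$ and one part of size $u$ by a part of size $u+1$, so at the level of multiplicities it performs
\[
m_v\mapsto m_v-1,\quad m_{v-1}\mapsto m_{v-1}+1,\quad m_u\mapsto m_u-1,\quad m_{u+1}\mapsto m_{u+1}+1,
\]
while preserving both $\ell(\la)$ and $|\la|=n$. Since $v-u\geq 2$ forces $u<u+1\leq v-1<v$, the only coincidence that can occur among the four affected sizes is $u+1=v-1$; I will carry this collision as a separate case and otherwise treat $u,u+1,v-1,v$ as distinct.

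For~\eqref{eq:maxR} I would first reread~\eqref{eq:def-weight-RT} as $R(\la)=2\sum_{\{p,q\}}\min(\la_p,\la_q)$, the sum running over unordered pairs of (distinct) parts of $\la$, which is valid because a pair of sizes $s\leq t$ contributes $2s=2\min(s,t)$. Letting $A$ be the decreased part (size $v\to v-1$) and $B$ the increased part (size $u\to u+1$), only pairs meeting $A$ or $B$ change: a pair $\{A,C\}$ loses $1$ exactly when $\mathrm{size}(C)\geq v$, a pair $\{B,C\}$ gains $1$ exactly when $\mathrm{size}(C)\geq u+1$, and the pair $\{A,B\}$ gains $1$ since its minimum moves from $u$ to $u+1$ (here $u+1\leq v-1$ is used). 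Summing, the net change of $R/2$ is $1$ plus the number of parts whose size lies strictly between $u$ and $v$, namely $1+\sum_{u<t<v}m_t$, which doubles to~\eqref{eq:maxR}.

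For~\eqref{eq:maxTl}, the term $2\binom{\ell(\la)}{2}$ of $T^{(\ell)}$ is invariant, so it suffices to evaluate the change of $\sum_{s\geq 2}\binom{m_s}{2}$. Applying $\binom{m+1}{2}-\binom{m}{2}=m$ and $\binom{m-1}{2}-\binom{m}{2}=-(m-1)$ to each of the four multiplicities gives the generic value $2-m_u+m_{u+1}+m_{v-1}-m_v$. Two corrections then enter: when $u+1=v-1$ that common size receives $+2$ rather than two independent $+1$'s, adding an extra $+1$; and when $u=1$ the index $m_u$ is absent from a sum that starts at $s=2$, so the spuriously subtracted $-(m_u-1)$ must be added back, contributing $+(m_u-1)$. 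Because $v\geq u+2\geq 3$, the sizes $v-1$ and $v$ always lie in the summation range, so no other exclusion occurs, and the two corrections are exactly the $\chi$-terms of~\eqref{eq:maxTl}.

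The main work, and the step most prone to bookkeeping slips, is~\eqref{eq:maxTc}. Here I would rewrite $T^{(c)}(\la)=2p\,\ell(\la)-2\binom{p+1}{2}+\binom{m_2}{2}$ with $p:=m_1+m_2$, and track how $p$ and $m_2$ move. The behaviour splits on the size of $u$: if $u\geq 3$ neither $m_1$ nor $m_2$ is touched and the change is $0$; if $u=2$ then $m_2\mapsto m_2-1$ and $p\mapsto p-1$; and if $u=1$ then $m_1\mapsto m_1-1$ together with either $m_2\mapsto m_2+2,\ p\mapsto p+1$ (when $v=3$, so that $v-1=2$ also feeds $m_2$) or $m_2\mapsto m_2+1$ with $p$ unchanged (when $v\geq 4$). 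Substituting each case, expanding the binomial differences, and re-expressing $p=m_1+m_2$ through $\ell(\la)-m_1$ recovers the four branches of~\eqref{eq:maxTc}. The delicate point I expect to cost the most care is separating the $u=1,\,v=3$ collision, where $m_2$ jumps by $2$, from $u=1,\,v\geq 4$, since these yield genuinely different expressions.
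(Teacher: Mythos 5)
Your proof is correct and follows essentially the same route as the paper: both record how $\I_{u,v}^{+,-}$ acts on the multiplicity vector (including the $u+1=v-1$ collision, which the paper encodes as the $\chi(v=u+2)$ term in its update rule) and then substitute into the defining formulas~\eqref{eq:def-weight-RT}, the paper merely omitting the details for $T^{(\ell)}$ and $T^{(c)}$ that you carry out explicitly. The only cosmetic difference is in~\eqref{eq:maxR}, where the paper reads $R(\la)=\sum_{i}2(i-1)\la_i$ off the sorted part vector and obtains $R(\tilde{\la})-R(\la)=2(s-d)$ in one line, whereas you re-derive $R$ as twice the sum of pairwise minima and audit the affected pairs; both give $2\bigl(1+\sum_{u<t<v}m_t\bigr)$.
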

\begin{proof} Set $d=\sum_{i=v}^r m_i$ and $s=1+\sum_{i=u+1}^r m_i$. By
definition of $\tilde{\la}$, we have
\begin{itemize}
 \item[(a)] $\tilde{\la}=(\tilde\la_1,\ldots,\tilde\la_k)$ with $\tilde
\la_d=\la_d-1$, $\tilde \la_s=\la_s+1$ and $\tilde \la_i=\la_i$ for
$i\neq d,s$;
\item[(b)] $\tilde{\la}=(1^{\tilde{m_1}}2^{\tilde{m_2}}\cdots
r^{\tilde{m_r}})$ with $\tilde{m_j}=m_j-1$ for $j\in\{u,v\}$,
$\tilde{m_{j}}=m_{j}+1+\chi(v=u+2)$ for $j\in\{u+1,v-1\}$  and
$\tilde{m_{j}}=m_{j}$ for $j\notin\{u,u+1,v-1,v\}$.
\end{itemize}
 Using expression
(a) for $\tilde{\la}$ and \eqref{eq:def-weight-RT}, we arrive at
$R(\tilde{\la})-R(\la)=\sum_{i=1}^{k} 2(i-1)(\tilde \la_i
-\la_i)=2(s-d)$. This proves the first assertion.  Similarly, the
other assertions can be obtained by using expression (b) for
$\tilde{\la}$ and~\eqref{eq:def-weight-RT}. We omit the details.
\end{proof}

 A very useful  consequence of Lemma~\ref{lem:max1} is the following result.
\begin{prop}\label{prop:notmaxima}
Suppose we are given integers $n,k\geq 1$ and let $\la$ be a
partition in $\P(n,k)$ which contains two nonconsecutive parts $u,v$
with $u<v$. Then, (1) $\la$ is not a maxima of $M^{(\ell)}$ on
$\P(n,k)$; (2) If in addition $(u,v)\neq (1,3)$, then $\la$ is not a
maxima of $M^{(c)}$ on $\P(n,k)$.
\end{prop}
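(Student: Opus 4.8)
The plan is to show that $\la$ fails to be a maximum by exhibiting a strictly better partition in $\P(n,k)$, namely $\tilde\la:=\I_{u,v}^{+,-}(\la)$, the partition produced by the elementary move of Lemma~\ref{lem:max1}. First I would observe that this move transfers a single unit from a part equal to $v$ to a part equal to $u$, so it preserves both the total $n$ and the number of parts $k$; hence $\tilde\la$ again lies in $\P(n,k)$ and is a legitimate competitor. Since $m_um_v>0$ and $v-u\geq 2$ by hypothesis, the move is well defined.

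By the decomposition \eqref{eq:M vs R}, i.e.\ $M^{(\ell)}=R-T^{(\ell)}$ and $M^{(c)}=R-T^{(c)}$, showing that $\la$ is not a maximum of $M^{(\ell)}$ (resp.\ of $M^{(c)}$) reduces to the strict inequality
\begin{align*}
\bigl(R(\tilde\la)-R(\la)\bigr)-\bigl(T^{(\ell)}(\tilde\la)-T^{(\ell)}(\la)\bigr)>0,
\end{align*}
and likewise with $T^{(c)}$ in place of $T^{(\ell)}$. Every quantity here was computed in Lemma~\ref{lem:max1}, so the whole argument is a substitution followed by a sign check. The governing principle is that the gain $R(\tilde\la)-R(\la)=2\bigl(1+\sum_{u<t<v}m_t\bigr)$, which measures the ``gap'' between the levels $u$ and $v$, dominates the much smaller change in the correction term.

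For part (1) I would insert \eqref{eq:maxR} and \eqref{eq:maxTl} and split according to whether $v=u+2$ (so that $u+1=v-1$ and the term $\chi(u+1=v-1)$ contributes) or $v\geq u+3$. In the first case the left-hand side collapses to $m_u+m_v-1$, and to $m_v$ when $u=1$; in the second it is bounded below by $m_u+m_v+m_{u+1}+m_{v-1}$, up to the $\chi(u=1)$ adjustment. Because $m_u,m_v\geq 1$, each expression is a positive integer, giving $M^{(\ell)}(\tilde\la)>M^{(\ell)}(\la)$. For part (2) I would argue identically from \eqref{eq:maxTc}: the cases $u\geq 3$ and $u=1,\,v\geq 4$ are immediate, since the $T^{(c)}$-change is then $0$ or $m_2$ while the gap sum already exceeds $m_2$; the only case demanding a rewrite is $u=2$, where I would replace $\ell(\la)-m_1$ by $\sum_{i\geq 2}m_i$, after which the seemingly large negative term $-2(\ell(\la)-m_1)$ in the $T^{(c)}$-change becomes a positive contribution once its sign is flipped.

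The point requiring the most attention, and the reason for the exclusion of $(u,v)=(1,3)$, is the following: for that pair the move changes $T^{(c)}$ by $2(\ell(\la)-m_1)-1$, and the same computation yields $3-2\sum_{i\geq 3}m_i$, which is positive only when $\la$ has at most one part of size at least $3$. Thus the move need not improve $\croc$ in the $(1,3)$ case, which is precisely why the hypothesis rules it out; in every remaining case the gap-dominates-correction inequality delivers a strict improvement, completing the proof.
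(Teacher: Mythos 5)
Your proof is correct and is essentially the paper's own argument: both apply the move $\I_{u,v}^{+,-}$ of Lemma~\ref{lem:max1} to produce a competitor $\tilde\la\in\P(n,k)$, use the decomposition~\eqref{eq:M vs R}, and check by substitution that the gain in $R$ strictly exceeds the change in $T^{(\ell)}$ (resp.\ $T^{(c)}$). The only differences are organizational --- the paper compresses your case split into the single bounds $M^{(\ell)}(\tilde\la)-M^{(\ell)}(\la)\geq m_u+m_v-\chi(v=u+2)-(m_u-1)\chi(u=1)\geq 1$ and $T^{(c)}(\tilde\la)-T^{(c)}(\la)\leq m_2\chi(u=1)$ --- and your closing computation of the $(1,3)$ case correctly pinpoints why that pair must be excluded.
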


\begin{proof} Set $\tilde{\la}=\I_{\,u,v}^{\,+,-}(\la)$, with
$\I_{\,u,v}^{\,+,-}(\la)$ defined as in Lemma~\ref{lem:max1}.
Observe that $\tilde{\la}\in\P(n,k)$. By~\eqref{eq:maxR}, we have
\begin{align}\label{eq:notmax1}
R(\tilde{\la})-R(\la)\geq 2+m_{u+1}+m_{v-1}.
\end{align}
This inequality, combined with~\eqref{eq:M vs R}
and~\eqref{eq:maxTl}, implies that
\begin{align*}
M^{(\ell)}(\tilde{\la})-M^{(\ell)}(\la) &\geq
m_u+m_{v}-\chi(v=u+2)-(m_u-1)\chi(u=1)\geq 1
\end{align*}
since $m_u$ and $m_v$ are positive integers. This proves the first
assertion.

Suppose now $(u,v)\neq(1,3)$ (and thus $v\geq 4$). Then, it is
immediate from~\eqref{eq:maxTc} that
$T^{(c)}(\tilde{\la})-T^{(c)}(\la)\leq m_2\chi(u=1)$. If we combined
this inequality with~\eqref{eq:notmax1} and~\eqref{eq:M vs R}, we
 arrive at
\begin{align*}
M^{(c)}(\tilde{\la})-M^{(c)}(\la) &\geq
2+m_{u+1}+m_{v-1}-\chi(v=u+2)-m_2\chi(u=1)\geq 2.
\end{align*}
This proves the second assertion. \end{proof}

 We now have enough tools to
verify Theorem~\ref{thm:max-M}.

\paragraph{\it{Proof of Theorem~\ref{thm:max-M}}}
It is easily checked that any partition $\la$ in $\P(n,k)$ which is
distinct from $\la_{n,k}^*$  contains  two nonconsecutive parts
$u,v$ such that  (i) $u<\left\lfloor\frac{n}{k}\right\rfloor<v$ or
 (ii) $u=\left\lfloor\frac{n}{k}\right\rfloor$ and
$v>\left\lceil\frac{n}{k}\right\rceil$. By
Proposition~\ref{prop:notmaxima}, this leads to assertions (1) and
(2a).

Let $\P(n,k;\leq 3)$ denote the set of all partitions in $\P(n,k)$
which have no part greater than $3$. If $n\leq 3k$, any partition in
$\P(n,k)\setminus\P(n,k;\leq 3)$ contains two parts $u,v$ with
$u<3<v$.  By Proposition~\ref{prop:notmaxima}(2), this implies 
that $\Mcnk\subseteq \P(n,k;\leq 3)$. 
Moreover, by~\eqref{eq:def-weightMc}, we have
$M^{(c)}(1^{a}2^{b}3^{d})=\binom{b}{2}+6\binom{d}{2}+2bd$, from
which it is easily seen that $
M^{(c)}(1^{a}2^{b+2}3^{d})<M^{(c)}(1^{a+1}2^{b}3^{d+1})$ and
$M^{(c)}(1^{a}2^{b+6})<M^{(c)}(1^{a+3}2^{b}3^{3})$ for all $a,b\geq
0$ and $d\geq 1$. Altogether, this shows that
\begin{align}\label{eq:M1}
\text{$k\geq \tfrac{n}{3}$ and $\la\in\Mcnk$}\Rightarrow
\text{$\la=\left(1^{a}2^{b}3^{c}\right)$ with $b\leq 1$ or ($c=0$
and $b\leq 5$)}.
\end{align}
On the other hand, (solving the system $\{a+b+c=k,a+2b+3c=3k-j\}$)
we see that, for $0\leq j\leq 2k$,
\begin{align}\label{eq:M2}
\P(3k-j,k;\leq3)&=\{\left(1^{s}2^{j-2s}3^{k-j+s}\right)\,:\;\max(0,j-k)\leq
s \leq\tfrac{j}{2}\}.
\end{align}
Combining~\eqref{eq:M1} with~\eqref{eq:M2} gives assertion~(2b).
This also shows that, if $j\in\{4,5\}$, $\M_{k+j,k}^{(c)}\subseteq
\{\left(1^{k-j}2^{j}\right),\left(1^{k-j+2}2^{j-4}3^{2}\right)\}$,
while, by~\eqref{eq:def-weightMc}, we have (if $j\in\{4,5\}$)
$M^{(c)}(1^{k-j}2^{j})= M^{(c)}(1^{k-j+2}2^{j-4}3^{2})$ . This gives assertion (2c). Assertion (2d) can be
obtained in a similar way. \qed

\subsection{Proof of  Theorem~\ref{thm:max-cro}}\label{sec:Max-Mln and Mcn}
It is a simple matter to derive Theorem~\ref{thm:max-cro} from
Theorem~\ref{thm:max-cro-bl}. By definition, we have
$M^{(u)}_{n}=\max\left(M^{(u)}_{n,k}:\,1\leq k\leq n\right)$ for $u\in\{\ell,c\}$.

We first show that $M^{(\ell)}_{n}=\left\lfloor \tfrac{1}{3}{n-1\choose 2}\right\rfloor$.
 For $1\leq n \leq 3$, $M^{(\ell)}_{n}=0$ and this assertion is
 true. 
 Suppose $n\geq 4$.
Then, use of the expressions given in Theorem~\ref{thm:max-cro-bl}
for $M^{(\ell)}_{n,k}$ and a basic study of the function $k\mapsto M^{(\ell)}_{n,k}$
(which is a piecewise function each part of which is a quadratic
function in $k$) the details of which are left to the reader shows
that
\begin{itemize}
 \item if $n\equiv 0 \pmod{3}$, the function $k\mapsto M^{(\ell)}_{n,k}$ has global maxima at exactly two points
 $k=\tfrac{n}{3}$ and $k=\tfrac{n}{3}+1$,
and the maximum is $M^{(\ell)}_{n,\tfrac{n}{3}}=M^{(\ell)}_{n,\tfrac{n}{3}+1}=\tfrac{n(n-3)}{6}=\left\lfloor
\tfrac{1}{3}{n-1\choose 2}\right\rfloor$;
\item if $n\equiv 1,2 \pmod{3}$, the function $k\mapsto M^{(\ell)}_{n,k}$ has a unique global maximum at $k=\left\lceil \tfrac{n}{3}\right\rceil$,
and the maximum is $M^{(\ell)}_{n,\left\lceil\tfrac{n}{3}\right\rceil}=\tfrac{1}{3}{n-1\choose 2}$.
\end{itemize}
 This concludes the proof of Theorem~\ref{thm:max-cro}(1). The
proof of the second part 
relies on the following result.
\begin{lem}\label{lem:croissance}
For $n\geq k\geq 1$, let $g_n(k)=(k-1)n-r_k(k-r_k)$, where $r_k$ is
the remainder of the division of $n$ by $k$. For $1\leq k\leq n-1$,
we have $g_{n}(k)<g_{n}(k+1)$.
\end{lem}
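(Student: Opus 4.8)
The plan is to study the first difference $\Delta(k) := g_n(k+1) - g_n(k)$ directly and show it is positive. Writing $n = qk + r$ with $q = \lfloor n/k \rfloor$ and $0 \le r = r_k \le k-1$, an immediate computation from the definition of $g_n$ gives
\[
\Delta(k) = n + r_k(k - r_k) - r_{k+1}\bigl((k+1) - r_{k+1}\bigr),
\]
so everything reduces to controlling the remainder $r_{k+1}$ in terms of $q$ and $r$. Since $n = q(k+1) + (r - q)$, I would distinguish three regimes according to the sign of $r - q$ and the size of $q$ relative to $k$.

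First I would dispose of the case $q \ge k+1$ (equivalently $n \ge k(k+1)$, which only occurs for small $k$). Here I would not try to pin down $r_{k+1}$ exactly; instead I use the trivial bound $r_{k+1}\bigl((k+1)-r_{k+1}\bigr) \le (k+1)^2/4$ together with $r_k(k-r_k) \ge 0$. The inequality $(k+1)^2/4 < n$ then follows from $n \ge k(k+1)$ and the elementary factorization $4k(k+1) - (k+1)^2 = (3k-1)(k+1) > 0$, so that $\Delta(k) > 0$.

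In the remaining two regimes $q \le k$, the quotient drops by at most one in passing from $k$ to $k+1$, so $r_{k+1}$ is determined explicitly. If $r \ge q$, then $q_{k+1} = q$ and $r_{k+1} = r - q$; substituting and simplifying (using $n = qk + r$) collapses the expression to $\Delta(k) = q\,\bigl(2(k-r) + 1 + q\bigr)$, which is positive since $q \ge 1$ and $r \le k-1$. If $r < q \le k$, then $q_{k+1} = q - 1$ and $r_{k+1} = r + k + 1 - q$, and the same kind of substitution yields $\Delta(k) = 2r(1 + k - q) + q(q-1)$; this is nonnegative term by term, and the only way it could vanish is $r = 0$ and $q \le 1$, which forces $n = k$ and is excluded by $k \le n-1$.

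The hard part will not be any single computation but organizing the case split correctly: the quotient $\lfloor n/k \rfloor$ can decrease by more than one as $k$ increases (when $k$ is small), so the clean dichotomy $r_{k+1} = r - q$ or $r_{k+1} = r + k + 1 - q$ is valid only once $q \le k$, and the boundary case $q \ge k+1$ must be peeled off first and handled by the crude estimate. The other point requiring care is the strictness of the inequality in the regime $r < q$, where one must invoke $k \le n-1$ to rule out the degenerate equality case. Once the regimes are correctly delineated, each reduces to a short algebraic identity.
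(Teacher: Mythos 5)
Your proof is correct and follows essentially the same route as the paper's: both compute the first difference $g_n(k+1)-g_n(k)$, dispose of the small-$k$ boundary regime with the crude bound $x(a-x)\le a^2/4$, and in the main regime pin down $r_{k+1}$ explicitly (your dichotomy $r\ge q$ versus $r<q$ is exactly the paper's congruence argument $r_{k+1}=ks+r_k-q_{k+1}$, $s\in\{0,1\}$, reparametrized by $q_k$ instead of $q_{k+1}$), arriving at the same two positive closed forms. Your identification of the degenerate case $r=0$, $q=1$ (i.e.\ $n=k$) as the only obstruction to strictness, excluded by $k\le n-1$, matches the paper's use of $q_{k+1}>0$.
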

\begin{proof}  Let $q_{k+1}:=\lfloor\tfrac{n}{k+1}\rfloor$. Then,
noticing that $n=(k+1)q_{k+1}+r_{k+1}$ and using the definition
of~$g_n$, we obtain
\begin{align}\label{eq:croissance}
g_n(k+1)-g_n(k)=(k+1)q_{k+1}+r_{k}(k-r_{k})-r_{k+1}(k-r_{k+1}).
\end{align}
If $q_{k+1}\geq k$, this yields $g_{n}(k)<g_{n}(k+1)$ (since
$x(a-x)\leq a^2/4$ for $a,x\in\mathbb{R}$).

 Suppose $q_{k+1}< k$. Then, $q_{k+1}+r_{k+1}< 2k$, and since $n\equiv q_{k+1}+r_{k+1}\equiv r_k
\pmod{k}$, we have $r_{k+1}=ks+r_{k}-q_{k+1}$ with $s\in\{0,1\}$.
Inserting this identity in~\eqref{eq:croissance} gives, after a
routine computation,
$$
g_n(k+1)-g_n(k)=\left\{
  \begin{array}{ll}
    q_{k+1}(1+q_{k+1})+2r_{k}(k-q_{k+1}), & \hbox{if $s=0$;} \\
    q_{k+1}(1+q_{k+1})+2q_{k+1}(k-r_{k}), & \hbox{if $s=1$,}
  \end{array}
\right.
$$
from which it immediately results that
$g_{n}(k+1)-g_{n}(k)>q_{k+1}(1+q_{k+1})>0$.
\end{proof}

Theorem~\ref{thm:max-cro-bl}(2a), in conjunction with the above result,
implies that $M^{(c)}_{n,\lfloor\tfrac{n}{3}\rfloor}>M^{(c)}_{n,k}$
for $k< \lfloor\tfrac{n}{3}\rfloor$. On the other hand, using the
formulas (2b) and (2c)  in Theorem~\ref{thm:max-cro-bl}, a simple study of the sign
of $M^{(c)}_{n,k+1}-M^{(c)}_{n,k}$ for $k>\lceil\tfrac{n}{3}\rceil$,
shows that 
$M^{(c)}_{n,\lceil\tfrac{n}{3}\rceil}>M^{(c)}_{n,k}$ for
$k>\lceil\tfrac{n}{3}\rceil$. Finally, by comparing
$M^{(c)}_{n,\lfloor\tfrac{n}{3}\rfloor}$ and
$M^{(c)}_{n,\lceil\tfrac{n}{3}\rceil}$, we arrive at the following
result:
\begin{itemize}
 \item if $n\equiv 0 \pmod{3}$, the function $k\mapsto M^{(c)}_{n,k}$ has a unique global maximum at
 $k=n/3$ and the maximum is $M^{(c)}_{n,\tfrac{n}{3}}=6\binom{\tfrac{n}{3}}{2}=\left\lfloor
\tfrac{2}{3}{n-1\choose 2}\right\rfloor$;
 \item if $n\equiv 1\pmod{3}$, the function $k\mapsto M^{(c)}_{n,k}$ has global maxima at exactly two points
 $k=\left\lfloor \tfrac{n}{3}\right\rfloor$ and $k=\left\lceil \tfrac{n}{3}\right\rceil$,
and the maximum is $M^{(c)}_{n,\left\lfloor
\tfrac{n}{3}\right\rfloor}=M^{(c)}_{n,\left\lceil\tfrac{n}{3}\right\rceil}=6\binom{\lfloor\tfrac{n}{3}\rfloor}{2}=\left\lfloor
\tfrac{2}{3}{n-2\choose 2}\right\rfloor$;
\item if $n\equiv 2 \pmod{3}$, the function $k\mapsto M^{(c)}_{n,k}$ has a unique global maximum at
 $k=\left\lceil \tfrac{n}{3}\right\rceil$,
and the maximum of $f_n$ is $M^{(c)}_{n,\left\lceil\tfrac{n}{3}\right\rceil}
=6\binom{\lfloor\tfrac{n}{3}\rfloor}{2}+2\lfloor\tfrac{n}{3}\rfloor=\tfrac{2}{3}{n-2\choose 2}$.
\end{itemize}
 This completes the proof of Theorem~\ref{thm:max-cro}.

\subsection{Remarks on the maximas of the parameter $\crol$}\label{sec:Max-comments} 
This section presents results concerning the set partitions which maximize the parameter $\crol$.
We don't give proofs due to lack of space but the results announced can be painlessly 
deduced from Theorem~\ref{thm:max-M}
and properties of the set partitions $\pi(\la)$ described in Section~\ref{sec:Max-Ml and Mc}.

Let $a^{(\ell)}_{n,k}$ (resp., $a^{(\ell)}_{n}$) denote the number of partitions $\pi\in \Pi_n^k$ (resp., $\pi\in \Pi_n$)
satisfying $\crol(\pi)=M^{(\ell)}_{n,k}$  (resp.,
$\crol(\pi)=M^{(\ell)}_{n}$). Then, we have 
\begin{align*}
a^{(\ell)}_{n,k}=\left\{
          \begin{array}{ll}
             1, & \hbox{if $1\leq k\leq \left\lfloor \tfrac{n}{2}\right\rfloor $,} \\
             \binom{n}{2n-2k}, & \hbox{if $\left\lceil \tfrac{n}{2}\right\rceil \leq  k\leq n$;}
          \end{array}
         \right.
\;\textrm{and} \quad a^{(\ell)}_{n}=\left\{
          \begin{array}{ll}
             2, & \hbox{if $n\equiv 0 \pmod{3}$,} \\
             1, & \hbox{if $n\equiv 1,2 \pmod{3}$.}
          \end{array}
         \right.
\end{align*}

If $n\geq 4$ and $\left\lceil
\tfrac{n}{2}\right\rceil \leq  k\leq n$, the $\binom{n}{2n-2k}$
partitions $\pi\in\Pi_{n}^k$ satisfying
$\crol(\pi)=M^{(\ell)}_{n,k}$ are the
 partitions of $[n]$ which consist of exactly $n-k$ mutually disjoint and crossing arcs.
 If $n\geq 4$ and $1\leq k\leq \left\lfloor \tfrac{n}{2}\right\rfloor $, the (unique) partition~$\pi\in\Pi_{n}^k$ 
 satisfying $\crol(\pi)=M^{(\ell)}_{n,k}$ is the partition $\pi(\la_{n,k}^*)$ as defined in Section~\ref{sec:Max-Ml and Mc}.
 Furthermore,  
\begin{itemize}
\item if $n\equiv 0 \pmod{3}$, the two partitions $\pi\in\Pi_{n}$
satisfying $\crol(\pi)=M^{(\ell)}_{n}$ are~$\pi\big(\la_{n,\tfrac{n}{3}}^*\big)$
and~$\pi\big(\la_{n,\tfrac{n}{3}+1}^*\big)$;
\item  if $n\equiv 1,2 \pmod{3}$, the unique  partition $\pi\in\Pi_{n}$
satisfying $\crol(\pi)=M^{(\ell)}_{n}$ is~$\pi\big(\la_{n,\left\lceil
\tfrac{n}{3}\right\rceil}^*\big)$.
\end{itemize}

Finally, note that similar results for the  parameter $\croc$ should exist 
but seem much more complicated (except if $n\equiv 0 \pmod{3}$).

\medskip
{\bf Acknowledgements.} The idea for this article arose when the author was a post-doc at the Reykjak University in Winter 2010. 
   Furthermore, I am indebted to Christian Krattenthaler, who suggested me to use the generating function~\eqref{eq:gf_crol} to compute the variance
 of the parameter $\crol$, and to an anonymous referee for many suggestions which helped to improve the contents of this paper.


\end{document}